\newtheorem{thm}{Theorem}[section]
\newtheorem{lem}[thm]{Lemma}
\newtheorem{prop}[thm]{Proposition}
\newtheorem{cor}[thm]{Corollary}
\newtheorem{assu-nota}[thm]{Assumption--Notation}
\theoremstyle{remark}
\newcommand{\Q}{\mathbb Q}
\newcommand{\pp}{\mathbb P}
\newcommand{\OO}{\mathcal O}
\DeclareMathOperator{\Pic}{Pic}
\DeclareMathOperator{\Alb}{Alb}
\DeclareMathOperator{\albdim}{albdim}
\DeclareMathOperator{\Quad}{Quad}
\DeclareMathOperator{\dimQuad}{dimQuad}
\newcommand{\al}{\alpha}
\newcommand{\be}{\beta}
\newcommand{\ga}{\gamma}
\newcommand{\Ga}{\Gamma}
\newcommand{\De}{\Delta}
\newcommand{\Si}{\Sigma}
\newcommand{\si}{\sigma}
\newcommand{\fie}{\varphi}
\newcommand{\Up}{\Upsilon}
\numberwithin{equation}{section}
\title{On the canonical map of surfaces with $q\ge 6$}
\author{Margarida Mendes Lopes, Rita Pardini and Gian Pietro Pirola}
\begin{document}
\begin{abstract}   We carry out an analysis of   the canonical system  of a  minimal complex surface  $S$ of general type with irregularity $q>0$. Using this analysis we are able to sharpen in the case $q>0$ the well known Castelnuovo inequality $K^2_S\ge 3p_g(S)+q(S)-7$.

Then we turn to the study of surfaces with $p_g=2q-3$ and no fibration onto a curve of genus $>1$. We prove that for $q\ge 6$ the canonical map is birational. Combining this result with the analysis of the canonical system, we also  prove the inequality:
$ K^2_S\ge 7\chi(S)+2$. This improves an earlier result of the  first and second author (\cite{mr}).
\medskip

\noindent{\em 2000 Mathematics Subject Classification:} 14J29
\end{abstract}
\maketitle
{\em Dedicated to Fabrizio Catanese on the occasion of his $60^{th}$ birthday.}
\tableofcontents

\section{Introduction}

 Complex surfaces of general type  have been an object of study since the $19^{th}$ century and nowadays  their general  behaviour is believed by many  to be understood,  but in fact there  are  still   many open problems. In particular, little  is known about  the  irregular surfaces, namely the surfaces that have non zero global holomorphic  $1$-forms.  This is in part due to the fact that a fundamental  tool for the study of surfaces of general type is the canonical map, which is easier to understand in the case of  regular surfaces.

Here  (cf. \S 4)  we carry out  an analysis of the canonical system of irregular surfaces, paying special attention to the case of surfaces without  irrational pencils of genus $>1$, namely  surfaces that have no fibration onto a curve of genus $>1$, and to the case where the canonical system has a fixed part.  These results enable us to sharpen in the case of irregular surfaces the well known Castelnuovo inequality $K^2\ge 3p_g+q-7$ for a minimal surface with birational canonical map (cf. \S 5).

Next, we turn  to surfaces with $p_g=2q-3$. Recall that by \cite{appendix}  an irregular surface $S$ of general type has $p_g\ge 2q-4$,  with equality holding if and only if $S$ is birational to the product of a curve of genus $2$ and a curve of genus $q-2$. Hence it seems natural to try to classify surfaces $S$ with $p_g=2q-3$. This is easily done under the assumption that $S$ has an irrational pencil of genus $>1$ (cf. \cite{mr}, \cite{bnp}) but the matter becomes very hard if one assumes that $S$ has no such pencil.   
Examples of surfaces with these properties are known only for $q=3,4$. For $q=3$ one has the symmetric product of a curve of genus 3  and it is known (\cite{HaconPardini},\cite{Pirola}) that this is the only  surface with  $p_g=q=3$  and no irrational pencil of genus $>1$. A family of examples with $q=4$ (hence $p_g=5$) has been constructed by C. Schoen in \cite{schoen}.
 Hence one is led to doubt of the existence of these surfaces for $q\ge 5$. Indeed, in \cite {mrp} it is shown that the case $q=5$, $p_g=7$  does not occur. However the arguments used in  \cite{mrp}, besides being quite intricate, are very ad hoc and for $q\ne 3, 5$ only some general restrictions are known. For $q\ge 4$, surfaces with $p_g=2q-3$ and no irrational pencil of genus $>1$ are  ``generalized Lagrangian'', namely  they have  independent global $1$-forms $\al_1,\dots,\al_4$ such that $\al_1\wedge \al_2+\al_3\wedge\al_4=0$. In \cite{bnp} it is shown that a  minimal generalized Lagrangian surface whose canonical system has no fixed part has $K^2\ge 8\chi$ and in \cite{mr} the weaker inequality $K^2\ge 7\chi-1$ has been proven  for all surfaces  with $p_g=2q-3$.

 Here we prove that  the canonical map of surfaces with $p_g=2q-3$ that have no irrational pencil of genus $>1$ is birational  (Theorem  \ref{thm:degreecan}). Combining this result with the improved version of the Castelnuovo inequality given in \S 5, we sharpen  the inequality of \cite{mr} to $K^2\ge 7\chi+2$. It is our hope that these results are a step towards deciding in general of the existence of surfaces with $p_g=2q-3$ and no irrational pencil of genus $>1$.

The paper is organized as follows.
In \S  2 we recall  several well known  technical results that are used repeatedly in the paper.
Sections 3 and 4 are the technical heart of the paper. 
In \S 3 we establish the existence of pencils of low degree on some rational surfaces, refining similar results by Reid and Xiao (\cite{miles},\cite{xiaohigh}).
 (This result is essential for proving  
 Theorem  \ref{thm:degreecan}).
Section 4 starts with some results on  the existence of certain types of curves on an irregular surface, that are, we believe, of independent interest. Then, in order to establish the afore mentioned sharpenings of Castelnuovo's theorem,   we  study the quadrics through the canonical image of an irregular surface
and, in addition,  we give a small  refinement of an inequality due to Debarre.
In \S 5 we use the results of \S 4 to prove the Castelnuovo type inequalities. Section 6  presents the results  on surfaces with $p_g=2q-3$ and $q\geq 6$. Whilst the birationality of the canonical map for such surfaces with $q\geq 7$ is an almost immediate consequence of the results of \S 3, some more work is needed to show birationality for $q=6$. 
\smallskip

\noindent {\em Acknowledgments:\/} This research was partially supported by  FCT (Portugal) through program POCTI/FEDER and Project 
PTDC/MAT/099275/2008 and by MIUR (Italy) through project   PRIN 2007 \emph{``Spazi di moduli e teorie di Lie''}.
 The first author is a member of the Center for Mathematical
Analysis, Geometry and Dynamical Systems (IST/UTL)  and the second and the third author are members of G.N.S.A.G.A.-I.N.d.A.M..
\smallskip

\noindent{\bf Notation and conventions:}   All varieties are complex projective. A rational map $f\colon  X\to Y$ is {\em composed with a pencil} if the image of $f$ is a curve. A linear system $|D|$ on $X$ is composed with a pencil if the map given by $|D|$ is. A surface $S$ has an {\em irrational pencil of genus $b>0$} if there exists a fibration $f\colon S\to B$, where $B$ is a curve of genus $b$. If $\Si$ is a singular surface we denote by $p_g(\Si)$ and $q(\Si)$  the geometric genus  and the irregularity of a  desingularization of $\Si$.

Usually a {\em  curve} on a surface $S$  will mean  an effective non zero divisor. We denote by $\omega_C$ the dualizing sheaf $\OO_C(K_S+C)$ of a curve $C$ of $S$. 
A {\em $(-2)$-curve} on $S$ is an effective divisor $Z$ such that $Z^2=-2$ and every irreducible component  $\theta$ of $Z$ satisfies $\theta^2=-2$ and $K_S\theta=0$.  A  $(-2)$-curve $Z$  is  called a {\em $(-2)$-cycle}  if $\theta Z\leq 0$ for every component $\theta$ of $Z$, i.e. if $Z$ is the fundamental cycle of an  A-D-E singularity  in the terminology  of \cite[Ch. III,\S 3]{bpv} or the numerical cycle of a Du Val singularity  in the terminology of \cite[Ch. IV]{miles2}.

If $Y$ is a connected subset of an abelian variety $A$, we denote  by $<Y>$ the abelian subvariety of $A$  generated by $Y$. We denote by $\albdim(X)$ the {\em Albanese dimension} of a variety $X$, namely the dimension of the image of the Albanese map of $X$.

\section {Auxiliary results}
In this section we collect several technical facts that will be used repeatedly in some of the proofs. Here ``surface'' means ``smooth complex projective surface''.
\subsection {Corollaries of the index theorem}

We recall the following corollary of the Hodge index theorem:
\begin{thm} \label{Hodge}{\rm (see, e.g., \cite{bpv})}
Let $D,E$ be $\Q$-divisors on the surface $S$. If $D^2>0$ and $DE=0$
then $E^2\leq 0$ and $E^2=0$ if and only if $E$ is homologous to $0$
in rational homology.

\end{thm}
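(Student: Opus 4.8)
The plan is to reduce the statement to the Hodge index theorem in its linear--algebra formulation, plus a short computation in the N\'eron--Severi space. Recall that the intersection pairing makes $V:=\Num(S)\tensor\R$ a non-degenerate real quadratic space of signature $(1,\rho-1)$, where $\rho=\dim_\R V$, and that any $\Q$-divisor on $S$ has a well-defined class in $\Num(S)\tensor\Q\subset V$ (whose self-intersection equals the rational number $E^2$). So the whole content is the following elementary remark: in a non-degenerate real quadratic space $V$ of signature $(1,\rho-1)$, if $v\in V$ satisfies $v^2>0$, then the orthogonal complement $v^\perp$ is negative definite of dimension $\rho-1$.

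To prove this remark I would first note that $\R v\cap v^\perp=0$ (if $\la v\cdot v=0$ then $\la=0$, since $v^2\ne0$), while $v^\perp$, being the kernel of the linear functional $u\mapsto u\cdot v$, has dimension $\geq\rho-1$; hence $V=\R v\oplus v^\perp$ is an orthogonal direct sum with $\dim v^\perp=\rho-1$. The restriction of the pairing to $v^\perp$ is again non-degenerate, because a vector of $v^\perp$ orthogonal to all of $v^\perp$ is then orthogonal to $\R v\oplus v^\perp=V$, hence zero; so it has a well-defined signature $(p,q)$ with $p+q=\rho-1$, and by additivity of signature under orthogonal direct sums $(1,0)+(p,q)=(1,\rho-1)$, forcing $p=0$. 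Thus $v^\perp$ is negative definite. Now I apply this with $V=\Num(S)\tensor\R$ and $v=D$: the hypothesis $D^2>0$ makes $D^\perp$ negative definite, and $DE=0$ puts the class of $E$ into $D^\perp$; therefore $E^2\leq 0$, with $E^2=0$ if and only if $E=0$ in $\Num(S)\tensor\R$.

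It remains only to match ``$E=0$ in $\Num(S)\tensor\R$'' with ``$E$ homologous to $0$ in rational homology''. As $E$ is a $\Q$-divisor, vanishing of its class in $\Num(S)\tensor\R$ is equivalent to vanishing in $\Num(S)\tensor\Q$, i.e.\ to $E$ being numerically trivial with $\Q$ coefficients; and for a smooth projective surface $\Num(S)\tensor\Q$ injects into $H^2(S,\Q)\iso H_2(S,\Q)$ (the first Chern class map $\Pic(S)\to H^2(S,\Z)$ has kernel $\Pic^0(S)$), so this is the same as the rational homology class of $E$ being zero. The reverse implication ($E$ homologous to $0\Rightarrow E^2=0$) is immediate, since a class homologous to $0$ has zero cup product with every class. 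I do not anticipate a genuine obstacle here: this is a textbook corollary of the index theorem, and the only point deserving a word of care is the last translation between numerical and homological triviality of $\Q$-divisors, which belongs to the standard package surrounding Hodge index.
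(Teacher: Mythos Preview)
The paper does not actually prove this statement: it is quoted with a reference to \cite{bpv} as a well-known corollary of the Hodge index theorem, and no argument is given. Your proof is correct and is exactly the standard textbook derivation---use the signature $(1,\rho-1)$ of the intersection form on $\Num(S)\otimes\R$ to see that $D^\perp$ is negative definite when $D^2>0$, then identify numerical triviality with vanishing in $H^2(S,\Q)$ for the equality case. There is nothing to compare against; your write-up would serve perfectly well as the omitted proof.
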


We will use mainly the following variations  of  Theorem \ref{Hodge}:
\begin{cor}\label{Hodge1}
Let $S$ be a surface and $D$ a $\Q$-divisor such that
$D^2>0$. Then  for any $\Q$-divisor  $Z$, $D^2Z^2-(DZ)^2\leq 0$.
\end{cor}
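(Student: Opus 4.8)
The plan is to reduce to a direct application of the Hodge index theorem (Theorem \ref{Hodge}) by replacing $Z$ with its ``component orthogonal to $D$''. Concretely, I would set
\[
E:=(D^2)\,Z-(DZ)\,D,
\]
which is again a $\Q$-divisor on $S$. The point of this choice is that $E$ is numerically orthogonal to $D$: indeed $DE=(D^2)(DZ)-(DZ)(D^2)=0$.

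Next I would invoke Theorem \ref{Hodge}: since $D^2>0$ and $DE=0$, it follows that $E^2\le 0$. It then remains to expand $E^2$ and compare. One computes
\[
E^2=(D^2)^2Z^2-2(D^2)(DZ)^2+(DZ)^2(D^2)=(D^2)\bigl((D^2)Z^2-(DZ)^2\bigr).
\]
Since $D^2>0$ and $E^2\le 0$, dividing by $D^2$ yields $(D^2)Z^2-(DZ)^2\le 0$, which is the assertion.

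There is essentially no obstacle here; the only point worth a word is why forming $E$ is legitimate and harmless: $E$ is an honest $\Q$-divisor (Theorem \ref{Hodge} is stated for $\Q$-divisors), so no clearing of denominators is needed, and the case $DZ=0$ is already covered directly by Theorem \ref{Hodge}. If one prefers to stay within $\Z$-divisors one could instead scale $D$ and $Z$ to be integral and absorb the resulting positive rational factor at the end, but this is unnecessary given that the ambient statement is about $\Q$-divisors. Thus the proof is just: define $E$, check $DE=0$, apply Theorem \ref{Hodge}, expand $E^2$, and divide by $D^2>0$.
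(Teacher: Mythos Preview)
Your argument is correct and is exactly the standard derivation of this corollary from the Hodge index theorem. The paper itself gives no proof for Corollary~\ref{Hodge1}, simply recording it as an immediate consequence of Theorem~\ref{Hodge}; your orthogonalization $E=(D^2)Z-(DZ)D$ followed by $DE=0\Rightarrow E^2\le 0$ is precisely the intended one-line justification.
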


\begin{cor}\label{Hodge2}
Let $S$ be a surface and $D$ a $\Q$-divisor of $S$ such that
$D^2>0$. Then for any decomposition of $D$ as $D=A+B$ where $A,B$ are  $\Q$-divisors,
$A^2B^2-(AB)^2\leq 0$ and if equality holds then there exist $m,n\in
\Q$ such that $mA$ is homologous to $ nB$ in rational homology.
\end{cor}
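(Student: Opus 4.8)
The plan is to deduce everything directly from Theorem~\ref{Hodge} by exhibiting a single auxiliary $\Q$-divisor $E$ that is orthogonal to $D$ and whose self-intersection number equals, up to the positive factor $D^2$, the quantity $A^2B^2-(AB)^2$ we want to bound. This way the inequality and the equality clause come out of the two halves of Theorem~\ref{Hodge} in one stroke, with no need to separate cases according to the signs of $A^2$ and $B^2$.

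Concretely, I would set
\[
E:=(B\cdot D)\,A-(A\cdot D)\,B .
\]
First one checks $E\cdot D=(B\cdot D)(A\cdot D)-(A\cdot D)(B\cdot D)=0$. Next, writing $a=A^2$, $b=B^2$, $c=AB$ and using $D=A+B$ (so that $A\cdot D=a+c$, $B\cdot D=b+c$ and $D^2=a+2c+b$), a routine expansion of $E^2=(B\cdot D)^2A^2-2(A\cdot D)(B\cdot D)AB+(A\cdot D)^2B^2$ collapses to
\[
E^2=(a+2c+b)(ab-c^2)=D^2\bigl(A^2B^2-(AB)^2\bigr).
\]

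Now I apply Theorem~\ref{Hodge}: since $D^2>0$ and $E\cdot D=0$ we get $E^2\le 0$, hence $A^2B^2-(AB)^2=E^2/D^2\le 0$, which is the asserted inequality. If equality holds then $E^2=0$, so the second part of Theorem~\ref{Hodge} forces $E$ to be homologous to $0$ in rational homology; that is, $(B\cdot D)A$ is homologous to $(A\cdot D)B$, and one may take $m=B\cdot D$, $n=A\cdot D$. These are not both zero because $A\cdot D+B\cdot D=D^2>0$, so the conclusion is non-vacuous.

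The only real ingenuity is the choice of $E$: it must be orthogonal to $D$ so that Theorem~\ref{Hodge} applies, and it must be a combination of $A$ and $B$ so that its square records $A^2B^2-(AB)^2$; once $E$ is written down the remaining work is a short algebraic identity. Conceptually this is just the assertion that the intersection form restricted to the span of $A$ and $B$ is not positive definite — consistent with the fact that $\NS(S)\otimes\Q$ carries a form of signature $(1,\rho-1)$ — but the explicit $E$ simultaneously pins down the equality case, so I would present it this way rather than via the abstract signature argument.
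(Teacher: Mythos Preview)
Your argument is correct: the auxiliary divisor $E=(B\cdot D)A-(A\cdot D)B$ is orthogonal to $D$, the identity $E^2=D^2\bigl(A^2B^2-(AB)^2\bigr)$ checks out by the computation you give, and Theorem~\ref{Hodge} then yields both the inequality and the equality clause. Your observation that $(m,n)=(B\cdot D,\,A\cdot D)$ cannot both vanish since their sum is $D^2>0$ is a useful remark that makes the equality statement non-trivial.

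There is nothing to compare against: the paper states Corollary~\ref{Hodge2} as a standard consequence of the index theorem and gives no proof. Your write-up is exactly the kind of direct verification one would expect, and the choice of $E$ is the natural one (it is the projection of $(B\cdot D)A$, or equivalently of $-(A\cdot D)B$, onto the orthogonal complement of $D$).
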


\medskip
\subsection {Properties of $m$-connected curves}

We recall  that by  a {\it curve} we mean an effective non zero divisor on  a surface  and that a curve 
$D$ is {\it m-connected} if $AB\geq m$ for any decomposition $D=A+B$ with $A,B>0$. Here we list several properties related  to this notion (cf. \cite[3.9]{miles}).
\medskip

\begin{prop}[see, e.g.,  Corollary A.2 of  \cite{cfm}, also \S 3.9 of  \cite{miles2}]\label{h0D} If $D$ is a $1$-connected curve then $h^0(D,\OO_D)=1$.
\end{prop}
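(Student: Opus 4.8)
The plan is to argue by induction on the number of irreducible components of $D$. If $D$ is irreducible (or, more generally, reduced and connected), then $\OO_D$ has a one-dimensional space of global sections because $D$ is connected and reduced; the content of the statement is really in the non-reduced case, so the induction must be set up to break a general 1-connected $D$ into pieces. First I would record the standard exact sequence associated to a decomposition $D = A + B$ with $A,B > 0$: namely
\begin{equation*}
0 \lra \OO_A(-B) \lra \OO_D \lra \OO_B \lra 0,
\end{equation*}
where $\OO_A(-B)$ denotes the restriction of $\OO_S(-B)$ to $A$. Taking global sections gives $h^0(D,\OO_D) \le h^0(A,\OO_A(-B)) + h^0(B,\OO_B)$, so the inductive step reduces to controlling these two terms.

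The key point is the choice of decomposition. I would pick $A$ to be a connected component-free building block: more precisely, write $D = \sum n_i \Gamma_i$ and choose a single irreducible component, say $\Gamma = \Gamma_1$, and split off $A = \Gamma$, $B = D - \Gamma$ (assuming $D$ is not irreducible, so $B>0$). By 1-connectedness, $AB = \Gamma(D-\Gamma) \ge 1$, hence $\deg_\Gamma \OO_A(-B) = -\Gamma(D-\Gamma) \le -1 < 0$; since $A = \Gamma$ is irreducible, a line bundle of negative degree on it has no sections, so $h^0(A,\OO_A(-B)) = 0$. Therefore $h^0(D,\OO_D) \le h^0(B,\OO_B)$. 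The remaining task is to show $B = D - \Gamma$ is again 1-connected — but this is \emph{false} in general, which is the main obstacle and forces a more careful bookkeeping.

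The standard fix (following Miles Reid, \S 3.9 of \cite{miles}/\cite{miles2}) is the following: instead of splitting off an arbitrary component, one shows that a 1-connected curve can be built up $D = \Gamma_{i_1} + \Gamma_{i_2} + \cdots$ by adding one irreducible component at a time so that at every stage the partial sum $D_k = \Gamma_{i_1} + \cdots + \Gamma_{i_k}$ is connected and $D_k \Gamma_{i_{k+1}} \ge 1$ (this uses connectedness of $D$ at each step to find a component meeting the current partial sum). With such an ordering in hand, one proves $h^0(D_k,\OO_{D_k}) = 1$ by induction on $k$: the sequence
\begin{equation*}
0 \lra \OO_{\Gamma_{i_{k+1}}}(-D_k) \lra \OO_{D_{k+1}} \lra \OO_{D_k} \lra 0
\end{equation*}
has $h^0(\OO_{\Gamma_{i_{k+1}}}(-D_k)) = 0$ because $\deg = -D_k\Gamma_{i_{k+1}} \le -1$ and $\Gamma_{i_{k+1}}$ is irreducible, and the connecting map $H^0(\OO_{D_k}) \to H^1(\OO_{\Gamma_{i_{k+1}}}(-D_k))$ — hmm, rather: from the sequence one gets $h^0(\OO_{D_{k+1}}) \le h^0(\OO_{D_k}) = 1$, and $h^0(\OO_{D_{k+1}}) \ge 1$ since $D_{k+1}$ is a nonempty projective scheme, so equality holds. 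The one genuinely delicate point, which I expect to be the main obstacle to write cleanly, is the combinatorial lemma that the components of a 1-connected (hence connected) curve $D$ can be ordered so that every initial segment $D_k$ is connected and satisfies $D_k\,\Gamma_{i_{k+1}}\ge 1$: connectedness of $D$ gives a component $\Gamma_{i_{k+1}}$ with $D_k\,\Gamma_{i_{k+1}} > 0$, and $D_k\,\Gamma_{i_{k+1}}$ is a nonnegative integer, so it is $\ge 1$ — one just has to make sure the process exhausts all components with their full multiplicities, which follows from 1-connectedness applied to the decomposition $D = D_k + (D - D_k)$ whenever $D - D_k > 0$. Since Corollary A.2 of \cite{cfm} and \S 3.9 of \cite{miles2} already contain this argument in full, I would simply cite them and present the exact-sequence reduction above as the proof.
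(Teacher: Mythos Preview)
Your argument is correct and follows the standard route (the decomposition sequence plus an inductive ordering of components guaranteed by $1$-connectedness). Note, however, that the paper gives no proof at all for this proposition: it is stated with the references to \cite{cfm} and \cite{miles2} built into the header and then used as a black box, so your sketch actually supplies more than the paper does.
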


\begin{lem}\label{tail} Let  $S$ be minimal of general type with $K^2_S>1$ and let  $E$ be an effective divisor of $S$ such  that $E^2=-1$ and $K_SE=1$. Then $E$ is  $1$-connected, $h^0(E, \omega_E)=1$ and
$h^0(E, K_S)=1$.
\end{lem}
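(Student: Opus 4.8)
The plan is to first establish $1$-connectedness of $E$ by a standard index-theoretic argument, then read off the two cohomological statements from Riemann--Roch together with Proposition \ref{h0D}. Suppose for contradiction that $E$ is not $1$-connected, so there is a decomposition $E=A+B$ with $A,B>0$ and $AB\le 0$. Since $K_S$ is nef and big (as $S$ is minimal of general type) and $K_SE=1$, exactly one of $K_SA$, $K_SB$ is zero, say $K_SB=0$; then minimality forces $B$ to be a $(-2)$-curve, so $B^2\le -2$ and in fact $B^2<0$ unless $B=0$. From $-1=E^2=A^2+2AB+B^2$ with $AB\le 0$ and $B^2\le -2$ we get $A^2\ge 1$, and since $K_S^2>1$ we may apply the Hodge index theorem (Corollary \ref{Hodge1}) to the pair $(K_S, A)$: because $K_SA=1$ we obtain $K_S^2\,A^2\le (K_SA)^2=1$, forcing $A^2\le 1$ and hence $A^2=1$ and $K_S^2=1$, contradicting the hypothesis $K_S^2>1$. (One checks the remaining edge cases — $AB\le 0$ with $B^2=0$, or $K_SA=K_SB=0$ — are excluded likewise, the latter because then $K_SE=0$.) Hence $E$ is $1$-connected.

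Given $1$-connectedness, Proposition \ref{h0D} gives $h^0(E,\OO_E)=1$. For $h^0(E,\omega_E)$, note $\omega_E=\OO_E(K_S+E)$, and by Serre duality on the curve $E$ one has $h^0(E,\omega_E)=h^1(E,\OO_E)=h^0(E,\OO_E)=1$, using that the Euler characteristic $\chi(\OO_E)=-\tfrac12(E^2+K_SE)=-\tfrac12(-1+1)=0$. Finally, for $h^0(E,K_S)=h^0(E,\OO_E(K_S))$ we use the exact sequence $0\to\OO_S(K_S-E)\to\OO_S(K_S)\to\OO_E(K_S)\to 0$. Taking cohomology, $h^0(E,K_S)\le h^0(S,K_S)-h^0(S,K_S-E)+h^1(S,K_S-E)$; by Serre duality $h^1(S,K_S-E)=h^1(S,E)$ and $h^0(S,K_S-E)=h^2(S,E)=h^0(S,-E)=0$. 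One then bounds things so that $h^0(E,K_S)=1$: the sheaf $\OO_E(K_S)$ has degree $K_SE=1$ on the $1$-connected (hence connected, with $h^0(\OO_E)=1$) curve $E$, and a degree-$1$ line bundle with a section on such a curve has exactly a one-dimensional space of sections — concretely, $\chi(\OO_E(K_S))=K_SE+\chi(\OO_E)=1$, and $h^1(E,\OO_E(K_S))=h^0(E,\omega_E(-K_S))=h^0(E,\OO_E(E))$; since $E$ is $1$-connected, $\OO_E(E)$ has no section vanishing nowhere unless it is trivial, and a short argument (or direct use of \cite[3.9]{miles}) gives $h^0(E,\OO_E(E))=0$, whence $h^0(E,K_S)=1$.

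I expect the main obstacle to be the contradiction step establishing $1$-connectedness: one must carefully rule out \emph{all} bad decompositions $E=A+B$, keeping track of the several sign inequalities ($AB\le 0$, $B^2\le 0$, $K_SA\ge 0$) simultaneously and invoking $K_S^2>1$ at exactly the right place; the subsidiary claim that a component with $K_S$-degree $0$ contributes a strictly negative self-intersection (so that $A^2\ge 1$) needs minimality of $S$. The cohomological consequences are then routine applications of Serre duality on $E$ and Proposition \ref{h0D}, the only mild subtlety being the vanishing $h^0(E,\OO_E(E))=0$, which again follows from $1$-connectedness.
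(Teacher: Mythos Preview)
Your approach is the same as the paper's: prove $1$-connectedness via the index theorem, then reduce $h^0(E,K_S|_E)=1$ to the vanishing $h^0(E,\OO_E(E))=0$ via Riemann--Roch and duality on $E$. The gap is precisely this last vanishing, which you assert but do not argue: knowing that a section of $\OO_E(E)$ must vanish somewhere (the bundle being nontrivial of degree $-1$) does not exclude sections when $E$ is reducible, and the citation of \cite[3.9]{miles} does not cover this. The paper's fix uses an observation already implicit in the $1$-connectedness step, namely that \emph{every} proper subcurve $0<C<E$ satisfies $C^2<0$ (if $K_SC=0$ this is the index theorem; if $K_SC=1$ then $K_S^2C^2\le (K_SC)^2=1$ and $K_S^2>1$ force $C^2\le 0$, and parity gives $C^2\le -1$). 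Since $E^2<0$, some component $\theta$ has $\theta E<0$; if nonetheless $h^0(E,\OO_E(E))\ne 0$, then \cite[Lemma~A.1]{cfm} produces a decomposition $E=A+B$ with $A,B>0$ and $EA\ge BA$, i.e.\ $A^2\ge 0$, contradicting what was just shown.

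Two smaller corrections along the way. In the $1$-connectedness step, ``minimality forces $B$ to be a $(-2)$-curve'' overstates things ($B$ may be reducible, and you have not shown $B^2=-2$); what you actually need, $B^2\le -2$, follows from the index theorem ($K_SB=0$ and $B$ effective nonzero, hence not numerically trivial, so $B^2<0$) together with parity from adjunction. And in your abandoned exact-sequence detour, Serre duality gives $h^2(S,\OO_S(E))=h^0(S,\OO_S(K_S-E))$, not $h^0(S,\OO_S(-E))$; you were right to drop that line.
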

\begin{proof}
Suppose that $E$ is not $1$-connected. Then there is a decomposition $E=A+B$ with $A,B>0$ and $AB\leq 0$. Since $A^2+2AB+B^2=E^2=-1$ we have $A^2+B^2\geq -1$ and therefore $A^2\geq 0$ or $B^2\geq 0$.

On the other hand, since $K_S$ is nef,  for any $0<C<E$ one has $K_SC=0$ or $K_SC=1$. If $K_SC=0$ then by the index theorem $C^2<0$. If $K_SC=1$ again by the index theorem (Corollary \ref{Hodge2})  $C^2\leq 0$ and by the adjunction formula $C^2$ is odd and so $C^2\leq -1$.
So we have a contradiction, that shows that $E$ is $1$-connected.

For the  second assertion it suffices to use  that,  by the $1$-connectedness of $E$ and Proposition \ref{h0D}, $h^0(E,\OO_E)=1$, and  that $p_a(E)=1$.

 For the last  assertion note first that, since $p_a(E)=1$ and $K_SE=1$, by the Riemann-Roch theorem one has  
 $h^0(E, K_S)=1+h^1(E,K_S)$.

 Since $\omega_E=(K_S+E)|E$, by duality one has $h^1(E, K_S)=h^0(E,E)$.

 Suppose that $h^0(E,E)\neq 0$.
 
 If $E$ is irreducible, we have immediately a contradiction because  $E^2=-1$.
 
 If  $E$ is not irreducible  there is some component $\theta$ of $E$ such that $\theta E<0$. Then, if  $h^0(E,E)\neq 0$, by  \cite[Lemma (A.1)]{cfm}
 there is a decomposition $E=A+B$, with $A,B>0$ where $EA\geq BA$. Since $EA=A^2+AB$, we obtain $A^2\geq 0$, a contradiction, because we saw above that every curve $C<E$ satisfies $C^2<0$. 
 
 Thus $h^0(E,E)=0$ and $h^0(E,K_S)=1$.
\end{proof}
\begin{prop}[\cite{adjoint} Lemma 2.6, also \cite{miles2} \S 3.9]  \label{1con}   
Let $D$ be a curve on a surface $S$ such that  $D^{2} \geq 1$ and  $D$
 is nef. Then every $D' \in |D|$ is 1-connected.\par

 Furthermore if $D '=A+B$ is a decomposition of $D$ with $A$, $B$ curves such that  $A
B = 1$,  only the following possibilities can occur:
\begin{itemize} 
 \item  
$A^{2} =-1$ or $B^{2} =-1$;
\item  $A^{2} =0$ or $B^{2} =0$; \item 
$A^{2}=B^{2}=1$, $A$ and $B$ are homologous in rational homology  and $D^{2}=4$.
\end{itemize}

Also if  $D^2\geq 10$,  and $D '=A+B$ is a decomposition of $D'$ with $A$, $B$ curves such that  $A
B = 2$, only the following possibilities can occur:
\begin{itemize} 
 \item  
$A^{2} =-2$ or $B^{2} =2$;
\item  $A^{2} =-1$ or $B^{2} =-1$;
\item  $A^{2} =0$ or $B^{2} =0$.
\end{itemize}

\end{prop}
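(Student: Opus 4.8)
The plan is to deduce all three assertions from the Hodge index theorem (Theorem~\ref{Hodge} and Corollaries~\ref{Hodge1}, \ref{Hodge2}) together with the nefness of $D$. Fix $D'\in|D|$, so that $D'^2=D^2>0$ and, for every curve $0<C<D'$, one has $CD'=CD\ge 0$; I will write intersection numbers against $D$ and $D'$ interchangeably, since $D'$ is numerically equivalent to $D$.

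For the first assertion, write $D'=A+B$ with $A,B>0$; the goal is $AB\ge 1$. From $AD\ge 0$, $BD\ge 0$ and $AD+BD=D^2\ge 1$, at least one of the two is positive. If $AD=0$, then $A$ is a nonzero effective divisor, hence not numerically trivial (intersect with an ample class), so Theorem~\ref{Hodge} forces $A^2<0$ and $AB=AD'-A^2=-A^2\ge 1$; the case $BD=0$ is symmetric. If instead $AD\ge 1$ and $BD\ge 1$, Corollary~\ref{Hodge1} gives $A^2\le(AD)^2/D^2$, so that
\[
AB=AD-A^2\ \ge\ AD-\frac{(AD)^2}{D^2}\ =\ \frac{(AD)(BD)}{D^2}\ >\ 0,
\]
and since $AB\in\Z$ we conclude $AB\ge 1$.

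Now suppose $AB=1$. If $AD=0$, the computation above yields $A^2=-1$, and symmetrically $BD=0$ yields $B^2=-1$: this is the first bullet. Otherwise $AD=A^2+1\ge 1$ and $BD=B^2+1\ge 1$, hence $A^2,B^2\ge 0$; if one of them vanishes we are in the second bullet. If $A^2,B^2\ge 1$ then $A^2B^2\ge 1$, while Corollary~\ref{Hodge2} applied to $D'=A+B$ gives $A^2B^2\le(AB)^2=1$; hence $A^2=B^2=1$ and $D^2=(A+B)^2=4$. Finally $(A-B)D'=A^2-B^2=0$ and $(A-B)^2=A^2-2AB+B^2=0$, so Theorem~\ref{Hodge} shows $A-B$ is zero in rational homology, i.e.\ $A$ and $B$ are homologous: the third bullet.

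For the last assertion, suppose $D^2\ge 10$ and $AB=2$. Nefness of $D$ gives $A^2=AD-2\ge -2$ and $B^2=BD-2\ge -2$. If both $A^2\ge 1$ and $B^2\ge 1$, then Corollary~\ref{Hodge2} forces $A^2B^2\le(AB)^2=4$, whereas $A^2+B^2=D^2-2AB=D^2-4\ge 6$; two positive integers with sum $\ge 6$ have product $\ge 5$, a contradiction. Hence $A^2\le 0$ or $B^2\le 0$, and combined with $A^2,B^2\ge -2$ this places one of $A^2,B^2$ in $\{-2,-1,0\}$, which exhausts the listed cases. I do not expect any real obstacle: the argument is bookkeeping with the index theorem, the only points needing care being the equality case of Corollary~\ref{Hodge2} that produces the homology relation in the third bullet, and the role of the stronger hypothesis $D^2\ge 10$ (rather than merely $D^2>0$) in ruling out $A^2,B^2\ge 1$ when $AB=2$.
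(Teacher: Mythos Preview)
Your argument is correct. Note that the paper does not give its own proof of this proposition: it is quoted with a reference to \cite{adjoint} and \cite{miles2}, so there is no in-paper proof to compare against. Your route---nefness to bound $A^2,B^2$ from below, then Corollary~\ref{Hodge2} to bound $A^2B^2$ from above, with the equality clause of Theorem~\ref{Hodge} giving the homology relation in the $A^2=B^2=1$ case---is exactly the standard Hodge-index bookkeeping one finds in the cited sources, so your proof is essentially the intended one. One small remark: in the last part you conclude that \emph{one} of $A^2,B^2$ lies in $\{-2,-1,0\}$, which is precisely what the three bullets assert (the ``$B^2=2$'' in the first bullet of the statement is a typographical slip for $B^2=-2$); you might say this explicitly.
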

\begin{lem}[\cite{cfm} Lemma A.4)]\label{always}  
 Let $D$ be an
$m$-connected curve of a  surface $S$ and let $D=D_1+D_2$ with $D_1$, $D_2$
curves. Then, with $[p/2]$ being the integer  part of an integer $p$:
\begin{enumerate}
\item  if  $D_1  D_2=m$, then $D_1$ and $D_2$ are
$[(m+1)/2]$-connected;
\item  if $D_1$ is chosen to be minimal subject to the
condition $D_1 (D-D_1) = m$, then $D_1$ is $[(m+3)/2]$-connected.
\end{enumerate}
\end{lem}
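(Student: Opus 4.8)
The plan is to deduce both parts purely from the defining inequality of $m$-connectedness, applied to two well-chosen decompositions of $D$ itself, together with elementary integer-part bookkeeping; no input from the index theorem is needed.

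For part (i) I would set $D_2 = D - D_1$, so that $D_1 D_2 = m$, take an arbitrary decomposition $D_1 = A + B$ into curves, and aim to show $AB \ge [(m+1)/2]$. The idea is to test the $m$-connectedness of $D$ against the two decompositions $D = A + (B + D_2)$ and $D = B + (A + D_2)$, both of which are genuine splittings into nonzero curves since $A, B, D_2 > 0$. This yields $AB + AD_2 \ge m$ and $AB + BD_2 \ge m$; adding them and using $(A + B)D_2 = D_1 D_2 = m$ gives $2AB \ge m$, hence $AB \ge [(m+1)/2]$ because $AB \in \Z$ and $[(m+1)/2] = \lceil m/2 \rceil$. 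As the hypothesis $D_1 D_2 = m$ is symmetric, the same computation bounds the connectedness of $D_2$.

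For part (ii) I would reuse the same two auxiliary decompositions of $D$ but now invoke the minimality of $D_1$: since $A$ and $B$ are nonzero proper subcurves of $D_1$, minimality rules out $A(D - A) = m$ and $B(D - B) = m$, and combined with the $m$-connectedness bounds $A(D - A) \ge m$, $B(D - B) \ge m$ this forces $A(D - A) \ge m + 1$ and $B(D - B) \ge m + 1$, i.e. $AB + AD_2 \ge m + 1$ and $AB + BD_2 \ge m + 1$. Summing and again using $(A + B)D_2 = m$ gives $2AB + m \ge 2m + 2$, so $AB \ge (m + 2)/2$, and therefore $AB \ge [(m+3)/2] = \lceil (m+2)/2 \rceil$.

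I do not expect a real obstacle here. The only points needing care are that the auxiliary splittings of $D$ really are decompositions into nonzero curves (so that $m$-connectedness may be invoked), that the integer-part identities $[(m+1)/2] = \lceil m/2 \rceil$ and $[(m+3)/2] = \lceil (m+2)/2 \rceil$ are applied correctly, and that in (ii) ``minimal'' is read as: no nonzero $D_1' < D_1$ satisfies $D_1'(D - D_1') = m$ — the statement being vacuous if no curve meets the condition $D_1(D - D_1) = m$ at all.
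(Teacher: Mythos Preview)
Your argument is correct. Note, however, that the paper does not supply its own proof of this lemma: it is quoted verbatim from \cite{cfm}, Lemma~A.4, and used as a black box. The proof you give is in fact the standard one (and coincides with the argument in \cite{cfm}): apply $m$-connectedness of $D$ to the two auxiliary splittings $D=A+(B+D_2)$ and $D=B+(A+D_2)$, add the resulting inequalities, and in case~(ii) exploit minimality to upgrade each inequality by~$1$. There is nothing to add.
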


The following  immediate consequence of  Lemma \ref{always}  and of the 2-connectdness of the canonical divisors on minimal surfaces will be used repeatedly.

\begin{cor}\label{alwaysc}
  If a canonical divisor on a minimal surface  $S$ decomposes as $K_S=A+B$ where $A,B>0$  and $AB=2$, then both $A$ and $B$ are 1-connected.
 
\end{cor}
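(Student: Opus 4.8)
The plan is to derive the statement formally from the two facts quoted in the sentence preceding it: the $2$-connectedness of effective canonical divisors on a minimal surface of general type, and part (1) of Lemma \ref{always}. So first I would recall that on such a surface every curve in $|K_S|$ is $2$-connected; this is the classical Bombieri--Ramanujam result (see, e.g., \cite{bpv}). In particular the hypothesis gives a presentation of the $2$-connected curve $K_S$ as a sum $K_S = A + B$ with $A, B > 0$ curves.

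Then I would simply apply Lemma \ref{always}(1) to $D = K_S$ with $D_1 = A$, $D_2 = B$ and $m = 2$: since $D_1 D_2 = 2 = m$, the lemma tells us that $D_1$ and $D_2$ are $[(m+1)/2]$-connected, and $[(2+1)/2] = [3/2] = 1$. Hence both $A$ and $B$ are $1$-connected, which is exactly the claim.

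There is essentially no obstacle here: the only point to check is that the hypotheses of Lemma \ref{always} are met, i.e. that $K_S$ really is $m$-connected for $m = 2$, and once the Bombieri--Ramanujam $2$-connectedness is invoked the corollary is purely a matter of substituting $m = 2$ and evaluating the integer part. I would therefore present the argument in a couple of lines.
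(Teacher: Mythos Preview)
Your proposal is correct and is exactly the argument the paper intends: the sentence introducing the corollary says it is an ``immediate consequence of Lemma \ref{always} and of the $2$-connectedness of the canonical divisors on minimal surfaces,'' and you have spelled out precisely that deduction, substituting $m=2$ into part (i) of Lemma \ref{always}.
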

\section{Rational surfaces of small degree}

The existence of pencils of low degree on ruled surfaces has been studied by M. Reid  (\cite{miles}) and Xiao Gang (\cite{xiaohigh}).  
In this section we prove the following  refinement of their results,   which is crucial in proving  Theorem \ref{thm:degreecan}:

\begin{thm}  \label{thm:degree}  
Let  $\Sigma\subset \pp^n$ be a rational surface  of degree $m$  not contained in any hyperplane and let  $\eta\colon \Upsilon\to \Sigma$ be the minimal desingularization. If the linear system $|H|:=\eta^*|\OO_{\pp^n}(1)|$ is complete, then:
\begin{enumerate}
\item if $n\ge9$ and $m\le \frac{3}{2} n$, then 
$\Sigma$ has a pencil of curves $|L|$  such
that every curve of $|L|$ spans at most a $\pp^r$ with $r<\frac{1}{2}n$;
\item if $n=8$, then   $\Si$ has a pencil of curves $|L|$ such 
that every curve of $|L|$ spans at most a $\pp^3$ for  $m\le 10$ and  it  has a pencil of curves of degree $\le 4$ for $m=11,12$.
\end{enumerate}
\end{thm}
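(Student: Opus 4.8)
The plan is to study the adjoint systems $|K_\Upsilon + kH|$ on the minimal desingularization $\Upsilon$ and use them to bound the degree of a pencil. Since $\Sigma$ is rational, $p_g(\Upsilon) = q(\Upsilon) = 0$, so $h^0(\Upsilon, K_\Upsilon + kH)$ can be computed by Riemann--Roch once $k$ is large enough that higher cohomology vanishes. On the other hand, a nondegenerate surface of degree $m$ in $\pp^n$ satisfies $m \ge n - 1$, and the difference $m - (n-1)$ controls how far $\Sigma$ is from a surface of minimal degree; these are classified (rational normal scrolls, the Veronese), and for them the assertion can be checked directly. So I would first dispose of the surfaces of minimal degree and near-minimal degree by hand, and then assume $\Sigma$ is not one of these, which gives extra positivity of $H$ to work with.

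The main construction: I would take a general hyperplane section $C \in |H|$, a smooth irreducible curve of genus $g = p_a(C)$, and produce on $C$ a $g^1_d$ with $d$ small by a Brill--Noether / Clifford-type count, then try to extend the corresponding pencil (or a subpencil of $|H - \text{(fibre)}|$) to all of $\Upsilon$. The numerical input is $H^2 = m$, $HC = m$, and $2g - 2 = HC + K_\Upsilon C = m + K_\Upsilon H$; combined with $h^0(H) = n + 1$ this pins down $K_\Upsilon H$ and hence $g$ in terms of $m$ and $n$. The hypothesis $m \le \tfrac32 n$ (resp.\ the explicit bounds for $n = 8$) is exactly what is needed to make the genus small enough — roughly $g \le $ something like $\tfrac12 n$ — that $C$ carries a pencil of the required low degree $\ell$, so that a general curve $L$ in that pencil spans a $\pp^r$ with $r \le \ell - 1 < \tfrac12 n$. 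To pass from a pencil on the general hyperplane section to a genuine pencil $|L|$ on $\Upsilon$ I would use the standard argument (as in Reid and Xiao): a linear system of dimension $\ge 2$ whose general member is the same curve type restricts to give the pencil, and one controls the base locus using the index theorem (Corollary~\ref{Hodge1}, Corollary~\ref{Hodge2}) and the fact that $|H|$ is complete.

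The bookkeeping for $n = 8$ with $m = 11, 12$ is different in flavour — here the genus is not quite small enough to force a $\pp^3$-pencil, so one settles for a pencil of curves of degree $\le 4$, which requires separately checking that the $g^1_4$ produced on $C$ does not come from a degree-$3$ subsystem forcing a worse span, or else handling the finitely many exceptional scroll-like configurations individually. I expect the main obstacle to be precisely this descent step: guaranteeing that the low-degree pencil found on the hyperplane section $C$ actually extends to a base-point-reasonable pencil on $\Sigma$ rather than only existing on $C$, and ruling out the degenerate cases (cones, scrolls over curves, surfaces with many $(-1)$- or $(-2)$-curves in the exceptional locus of $\eta$) where the naive count fails. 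This is where the hypothesis that $|H|$ is complete and the sharp numerical inequalities $m \le \tfrac32 n$, $n \ge 9$ must be used in full force, and where the improvement over Reid--Xiao lies.
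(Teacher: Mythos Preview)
Your opening sentence points at the right tool --- adjoint systems on $\Upsilon$ --- but your ``main construction'' then abandons it for a different strategy: find a low-degree $g^1_d$ on the general hyperplane section $C$ via Brill--Noether/Clifford, then lift it to a pencil on $\Upsilon$. This has a genuine gap, and you identify it yourself: the descent from a pencil on $C$ to a pencil on the surface is \emph{not} a ``standard argument''. A $g^1_d$ on a specific curve need not come from any pencil on the ambient surface, and Brill--Noether existence only applies to curves general in moduli, which hyperplane sections of a fixed rational surface are not. Nothing in Reid or Xiao gives such a descent; their arguments are of the adjunction type, not the curve-and-lift type.

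The paper avoids this problem entirely by never leaving the surface. It studies $|D|:=|K_\Upsilon+H|$ directly on $\Upsilon$: one computes $h^0(D)=g(C)=\tfrac12 n-\alpha+1$ (where $m=\tfrac32 n-\alpha$), shows $D$ is nef, and then splits into cases. If $|D|$ is composed with a pencil, that pencil has $H$-degree~$2$ and one is done. If not, the map given by $|D|$ sends $\Upsilon$ to a surface $T$ of very small degree in $\pp^{\,h^0(D)-1}$; the possible $T$ (minimal-degree surfaces, Veronese, weak del Pezzo) are classified, and each carries an explicit pencil of low degree whose pull-back to $\Upsilon$ does the job. When $D^2$ is a bit larger one passes to the second adjoint $|K_\Upsilon+D|$ or to $|-K_\Upsilon|$ and repeats. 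The exceptional cases for $n=8$, $m=11,12$ arise precisely when $T$ is a cubic or quartic del Pezzo and the best available pencil on $T$ has degree~$4$. So the structure is ``termination of adjunction plus classification of the adjoint image'', not ``Brill--Noether on a curve section plus descent''.
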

\begin{proof}  
 The proof, although long, is based on the simple classical idea of  ``termination of adjunction'' on a rational surface. One considers the adjoint system $|D|:=|K_{\Upsilon}+H|$. If $\dim|D|\le 0$, then the result follows by the classification of projective surfaces of very small degree. If $|D|$ is composed with a pencil $|L|$, then  the image of $|L|$ in $\Si$ is a pencil of degree $<\frac{n}{2}$. If the system $|D|$ maps $\Upsilon$ onto a surface, then one repeats the argument  considering the second adjoint system $|K_{\Up}+D|$. Termination of adjunction means that this process eventually stops  (in our case, it actually stops at most at the second step).

In the proof we also make repeated use of the elementary  fact that a connected curve of degree $r$ spans at most a $\pp^r$. 
 \medskip
 
By \cite[Corollary 1.1]{miles}, if $4m<6n-81/4$, i.e., if $m<\frac{3}{2}n-5-1/16$, then $\Sigma$ has a pencil of lines or conics and so the theorem is true in this case.
So we are left with studying $\frac{3}{2}n-5\leq m\leq  \frac{3}{2} n$.

Write $m=\frac{3}{2}n-\alpha$.
 If $m=n-1$, then $\Sigma$  is either a cone over a rational normal curve of degree $n-1$ or it is a rational normal scroll. In either case, it   has a pencil of lines. Similarly,  if $m=n$ there are two possibilities (see  \cite{nagata}):
 \begin{itemize}
 \item[(a)] $n=8$ and $\Si$ is the  anticanonical image of $\pp^2$ blown up at a point $P$  or of a (possibly singular) quadric of $\pp^3$. In either case, $\Si$ has a pencil of conics, corresponding  in the former case to  the lines through $P$ and in the latter case to  the lines of a ruling of the quadric.
 \item[(b)] $n=9$ and $\Si$ is the anticanonical image of $\pp^2$. In this case $\Si$ has a $2$-dimensional system of curves of degree 3, the images of the lines of $\pp^2$.
 \end{itemize}
So we can assume that  $m>n$, i.e. $\frac{1}{2} n>\alpha$.

Let $H\in |H|$ be general. The curve $H$  is smooth and irreducible by Bertini's theorem and,   by the regularity of $\Upsilon$,  the system $|H|_H$ is  complete.  Since  $|H|_H$ has dimension $n-1$ and degree  $<2(n-1)$, it is  not  special by Clifford's theorem.  So restricting $\OO_{\Upsilon}(H)$ to $H$ and taking cohomology we get $h^1(\OO_{\Upsilon}(H))=0$.  Riemann-Roch  applied to $\OO_H(H)$ gives $n=\frac{3}{2}n-\alpha+1-g(H)$, namely $g(H)-1=\frac{1}{2}n-\alpha$. The adjunction formula gives   $K_\Upsilon H=-\frac{1}{2}n-\alpha$.   
 
 We consider now the adjoint linear system  $|D|:=|K_\Upsilon+H|$.  Using the adjunction sequence for $H$,  one sees that $h^0(D)= g(H)=\frac{1}{2}n-\alpha+1$ and, because we are assuming $\frac{1}{2} n>\alpha$, $h^0(D)\geq 2$. Write $|D|=Z+|M|$, where  $Z$ is the fixed part of $|D|$  and $|M|$ is the moving part.  
 \smallskip

 \noindent{\bf Step 1:} {\em $D$ is nef. In particular, we have $D^2\ge 0$.}\newline
 Since $q(\Up)=0$,  the restriction of $|D|$ to a curve $H\in|H|$ is the complete canonical system $|K_H|$. Since for a general $H$ the system    $|K_H|$ is  base point free,  for any irreducible component $\theta$ of $Z$ we have $\theta H=0$ and so, by the index theorem, $\theta^2<0$. 
 
 Let $\theta$ be an irreducible  curve such that $\theta D<0$. Since $D$ is effective, $\theta$ is a component of $Z$. Hence $\theta H=0$, $\theta K_{\Up}<0$, $\theta^2<0$, namely $\theta$ is a $-1-$curve contracted by $|H|$, against the assumption that $\Up\to \Si$ is the minimal desingularization.
 \medskip

 \noindent{\bf Step 2:} {\em If $|D|$ is composed with a pencil, then $\Sigma$ has a pencil  of conics.}\newline
 If $|D|$ is composed with a pencil  we can write $|D|=Z+|(\frac{1}{2}n-\alpha)G|$, where  $|G|$ is a pencil. Since $HZ=0$ (cf. Step 1) and $HD=n-2\alpha$, one has $HG= 2$ and the general $G$ is mapped by $\eta$  to a conic of $\pp^n$.  \medskip
  
  \noindent{\bf Step 3:} {\em If $|D|$ is not composed with a pencil, then  $D^2\geq \frac{1}{2}n-\alpha-1\ge 1$.
  Furthermore, if $D^2\leq \frac{1}{2}n-\alpha$, $|D|$ is base point free.}\newline
 Since we are assuming that $|D|$ is not composed with a pencil, we have $h^0(D)= \frac{1}{2}n-\alpha+1\ge 3$,  the general $M\in |M|$ is irreducible  and  $M^2\geq \frac{1}{2}n-\alpha-1$. The last inequality  holds because  the image of $\Upsilon$ via the map defined by $|D|$ is a non degenerate surface.

  Since $D$ is nef by Step 1,  we have $D^2\geq DM=M^2+MZ\ge M^2$ and so $D^2\geq \frac{1}{2}n-\alpha-1$.
Since $D$ is nef and $D^2>0$, every curve of $|D|$ is 1-connected by Proposition \ref{1con} , and so  $Z\neq 0$  iff $MZ>0$. Because $HM=HD=H(K_{\Upsilon}+H)$ is even (recall $HZ=0$, cf. Step 1), we obtain $M^2+K_\Upsilon M= M^2+DM-HM=2M^2+MZ-HM\equiv MZ \mod 2$. So, if $Z\ne 0$, then  $MZ\ge 2$ and $D^2\ge  \frac{1}{2}n-\alpha+1$.
 
If $D^2=   \frac{1}{2}n-\alpha-1$, then of course $|D|$ has no base points, whilst if $D^2= \frac{1}{2}n-\alpha$, $|D|$ can  have one simple base point. If  this is the case, the system  $|D|$ maps $\Upsilon$ birationally onto a surface of minimal degree in $\pp^{\frac{1}{2}n-\alpha}$. Hence the image of a general $D\in |D|$ is a rational normal curve in $\pp^{\frac{1}{2}n-\alpha-1}$. Since $D$ is smooth by Bertini's theorem, it is isomorphic to $\pp^1$. On the other hand, the restricted system $|D|_D$ has positive dimension,  it is complete since $\Upsilon$  is regular and  it has a base point since $|D|$ has one.  Since this contradicts the theory of complete linear systems on $\pp^1$, we have proven that $|D|$ has no base point for $D^2= \frac{1}{2}n-\alpha$.
 \medskip

In view of Step 2, we may assume that $|D|$ is not composed with a pencil. We finish the proof   by a case by case study, considering the various possibilities for $D^2$.  

    \noindent{\bf Step 4:} {\em The case  $D^2=\frac{1}{2}n-\alpha-1$.}\newline
Suppose first that  $\frac{1}{2} n-\alpha-1\ge 2$.
 By Step 3,  $|D|$ has no base points and maps $\Upsilon$  birationally onto a non degenerate surface  $T$ of minimal degre. There are two possibilities:
   
   (a) $T$ is ruled by lines; or
   
   (b) $T$ is the Veronese surface in $\pp^5$.
  
 In case (a), denote by $|G|$ the moving part of the pull back to $\Upsilon$ of a pencil of lines.  Since $DG=1$,  the index theorem gives   $G^2=0$. It follows $K_\Upsilon G=-2$,  $HG=3$ and therefore the curves of $|G|$ are mapped to cubics by $\eta$.

In case (b),  we have $\frac{1}{2} n-\alpha=5$, $HD=10$ and we can write $D=2\Delta$, where $\Delta$ is the pull back of a conic contained in $T$. Hence $H\Delta=5$.  This is enough to prove the statement if $n\ge 11$, namely if $\alpha\ge \frac{1}{2}$.

 If $\alpha=0$, then $n=10$, $H^2=15$. Since  $4=D^2=4\Delta^2$, we have  $\Delta^2=1$ and $|\Delta|$ gives a birational morphism to $\pp^2$. Since $K_{\Upsilon}H=-5$ and $D^2=(K_{\Up}+H)^2=4$, we get $K^2_{\Upsilon}=-1$.   Hence the morphism  $\Up\to \pp^2$ given by    $|\Delta|$ is the  composition of blow ups at ten  (possibly not distinct) points $P_1,\dots P_{10}$ of $\pp^2$. Denote by $E_1,\dots E_{10}$ the corresponding $-1$-curves of $\Upsilon$.   Then $K_{\Upsilon}=-3\De+\sum_i E_i$ and $H=D-K_{\Upsilon} =5\De-\sum_i E_i$.  The pull-back of the pencil of lines through, say,  $P_1$ gives a pencil  $|L|$ on $\Upsilon$ such that   $HL=4<5=\frac{1}{2} n$.
  
  Suppose finally  that $\frac{1}{2} n-\alpha-1=1$, namely that $|D|$ gives a birational morphism  $\Upsilon\to \pp^2$. Since $HD=4$,  the theorem is proven for $n\geq 9$ . 
  
  Suppose $n=8$, hence $\alpha=2$, $H^2=10$,  $K_\Upsilon H=-6$ and $K^2_{\Upsilon}=3$. The birational morphism $\Si\to \pp^2$ given by $|D|$  is 
the composition of  blow ups at six (possibly infinitely near) points. So $|H|=|D-K_{\Up}|$ is the pull back  of the  system of plane quartics  through these six points.  The pull-back of the pencil of lines through one of these points gives a pencil $|L|$ on $\Upsilon$ such that   $HL=3<4=\frac{1}{2} n$.
   \medskip
   
 \noindent{\bf Step 5:} {\em The case  $D^2=\frac{1}{2}n-\alpha$.}\newline
By Step 3, the system  $|D|$ has no base points and maps $\Upsilon$ birationally onto a rational surface $T$ of degree $p$ in $\pp^p$, where $p=\frac{1}{2}n-\alpha$. Since the system $|D|$ is complete, we have $p\le 9$ and $T$ is a weak Del Pezzo surface. Let $\tilde{T}\to T$ be the minimal desingularization; then $\tilde{T}$ is either  an irreducible  quadric of $\pp^3$ ($p=8$)  an irreducible  quadric of $\pp^3$ ($p=8$)  or the blow up of $\pp^2$ at $9-p$ base points,  and the map $\tilde{T}\to T\subset \pp^P$ is given by the anticanonical system $|-K_{\tilde{T}}|$. The morphism $\Upsilon\to T$ factors through a morphism $f\colon \Upsilon \to \tilde{T}$ such that $D=f^*(-K_{\tilde{T}})$.
For  $3\leq p\leq 8$ the surface $\tilde{T}$ has a  pencil of rational curves $|G|$  of degree $2$ with $G^2=0$, given  in the former case by a ruling of the quadric and in the latter case  by the lines through one of the blown-up points. Pulling back  this pencil to $\Upsilon$ we obtain a linear system $|L|$ such that  $HL=4$. This proves the theorem for $n\ge 9$. For  $n=8$ there are two possibilities, $p=4$, $\alpha=0$, $m=12$ and $p=3$, $\alpha=1$, $m=11$, which correspond to the exceptions given  in statement (b). 

If $p=9$, then the pull back of the system of  lines of $\pp^2$ gives a linear system $|L|$ such that $HL=6$. In this case we have $\frac{1}{2}n-\alpha=9$ and so $n\geq 18$. 

If $p=2$, then  $HD=4$ and so if $n\geq 9$  the assertion is proven.
We claim that $n=8$, $p=2$ does not occur.  In fact if $n=8$, then from $\frac{1}{2}n-\alpha=2$ we obtain $\alpha=2$, $H^2=10$ and $K_\Upsilon H=-6$. From $D^2=(K_\Upsilon+H)^2=2$ we obtain $K_\Upsilon^2=4$ and so $K_\Upsilon^2H^2-(K_\Upsilon H)=40-36>0$, contradicting the index theorem.
   \medskip

   \noindent{\bf Step 6:} {\em The case  $\alpha\geq 1$ and $D^2\geq \frac{1}{2}n-\alpha+1$.}\newline
  By $D^2=K_\Upsilon^2+\frac{1}{2}n-3\alpha$, in this case  $K_\Upsilon^2-2\alpha\geq 1$.  By the Riemann-Roch theorem and Kawamata-Viehweg vanishing   $h^0(K_\Upsilon+D)= K_\Upsilon^2-2\alpha+1$ and so $h^0(K_\Upsilon+D)\geq 2$. On the other hand $H(K_\Upsilon+D)= \frac{1}{2}n-3\alpha$.   Hence a general curve $L$  in the moving part of $|K_\Upsilon+D|$ satisfies $HL\le \frac{1}{2}n-3\alpha<\frac{1}{2} n$.
  \medskip
    
   \noindent{\bf Step 7:} {\em The case  $\alpha=0$ and   $D^2\geq \frac{1}{2}n+1$.}\newline
  In this case $D^2\geq \frac{1}{2}n+1$ implies that $K_\Upsilon^2\geq 1$, because $D^2=K_\Upsilon^2+\frac{1}{2}n$.  Hence $h^0(-K_\Upsilon)\geq 2$. 
   
 Let $C$ be a curve in the moving part of $|-\!K_{\Upsilon}|$. 
   The kernel of the restriction map   $H^0(H)\to H^0(C, H|_C)$ is $H^0(H-C)$.  One has  $ h^0( H-C)\ge h^0(H+K_\Upsilon)=\frac{1}{2}n+1$ and  $h^0(H)=n+1$.  We conclude that the image via $|H|$ of 
 $C$  spans a projective space of dimension $<\frac{1}{2} n$.
  \smallskip
  
  So,  having covered all possible cases,  we have proven the theorem.
   \end{proof}

\bigskip
For later reference  we examine more closely   one of the exceptions in case (ii) of Theorem  \ref{thm:degree}    .  

\begin{prop} \label{deg11} Let  $\Sigma\subset \pp^8$ be a rational surface  of degree $11$  not contained in any hyperplane and let  $\eta\colon \Upsilon\to \Sigma$ be the minimal desingularization. If the linear system $|H|:=\eta^*|\OO_{\pp^n}(1)|$ is complete and $\Sigma $ has no pencil of curves of degree $<4$, then $H$ decomposes as $H=2H' + J$, where  $J$ is a non zero effective divisor, $h^0(\Sigma, H')\geq 3$ and the linear system $|H'|$ has no fixed components.
\end{prop}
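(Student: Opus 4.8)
I would run the argument of Theorem~\ref{thm:degree} in the special case $n=8$, $m=11$ (so $\alpha=1$), use the hypothesis that $\Sigma$ has no pencil of curves of degree $<4$ to rule out all the cases occurring there but one, and then read off the decomposition $H=2H'+J$ from the geometry of that one case. Put $D:=K_\Upsilon+H$; the relevant numerics are $H^2=11$, $K_\Upsilon H=-5$, $HD=6$, $h^0(D)=4$ and $D^2=K_\Upsilon^2+1$. By Steps 1--3 of the proof of Theorem~\ref{thm:degree}, $D$ is nef and, since a pencil of conics on $\Sigma$ is excluded by hypothesis, $|D|$ is not composed with a pencil; hence $D^2\ge 2$, i.e. $K_\Upsilon^2\ge 1$.

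Next I would eliminate the remaining possibilities for $D^2$ exactly as in Steps 4 and 6. If $D^2=2$ then (Step 4, case (a): $h^0(D)=4$ forces the image into $\pp^3$) the system $|D|$ maps $\Upsilon$ birationally onto an irreducible quadric of $\pp^3$, and pulling back a ruling yields a pencil $|G|$ with $G^2=0$ and $HG=DG-K_\Upsilon G=1+2=3<4$, against the hypothesis. If $D^2\ge 4$ then $\alpha=1\ge 1$ and $D^2\ge\tfrac12 n-\alpha+1$, so Step 6 applies: $h^0(K_\Upsilon+D)=K_\Upsilon^2-1\ge 2$ while $H(K_\Upsilon+D)=\tfrac12 n-3\alpha=1$, so a general curve in the moving part of $|K_\Upsilon+D|$ has $H$-degree $\le 1$, giving a pencil of lines on $\Sigma$ — again impossible. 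Therefore $D^2=3$ and $K_\Upsilon^2=2$.

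In this case Step 5 of Theorem~\ref{thm:degree} tells us that $|D|$ is base point free and defines a birational morphism $\phi\colon\Upsilon\to T$, where $T\subset\pp^3$ is a cubic surface which is a weak Del Pezzo surface of degree $3$; moreover $\phi$ factors as $\Upsilon\xrightarrow{\,f\,}\tilde T\xrightarrow{\,g\,}T$, where $g$ is the minimal desingularization ($\tilde T$ is $\pp^2$ blown up at $6$ points, $g$ is given by $|-K_{\tilde T}|$ and contracts exactly the $(-2)$-curves of $\tilde T$) and $D=f^*(-K_{\tilde T})$. Comparing canonical self-intersections, $K_{\tilde T}^2=3$ and $K_\Upsilon^2=2$, so $f$ is a single blow up, at a point $Q\in\tilde T$, with exceptional $(-1)$-curve $F$. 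From $K_\Upsilon=f^*K_{\tilde T}+F$ we get
\[
K_\Upsilon+D=f^*K_{\tilde T}+F+f^*(-K_{\tilde T})=F,
\]
so $-K_\Upsilon=D-F$ and $H=D-K_\Upsilon=2D-F=-2K_\Upsilon+F$. I would then take $H':=-K_\Upsilon$ and $J:=F$, so that $H=2H'+J$ with $J$ a non zero effective divisor. Since $\chi(\OO_\Upsilon(-K_\Upsilon))=1+K_\Upsilon^2=3$ and, $\Upsilon$ being rational, $h^2(-K_\Upsilon)=h^0(2K_\Upsilon)=0$, Riemann--Roch gives $h^0(H')=h^0(-K_\Upsilon)\ge 3$.

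The remaining point, which I expect to be the crux, is that $|{-K_\Upsilon}|$ has no fixed component. First I would show that $g(Q)$ is a smooth point of $T$: if $Q$ lay on a $(-2)$-curve $\Gamma$ of $\tilde T$, its strict transform $\tilde\Gamma=f^*\Gamma-F$ on $\Upsilon$ would satisfy $H\tilde\Gamma=Hf^*\Gamma-HF=2(-K_{\tilde T})\Gamma-1=-1<0$ (using $H=2f^*(-K_{\tilde T})-F$, that $F$ is $f$-exceptional so $Hf^*\Gamma=2(-K_{\tilde T})\Gamma=0$, and $HF=(2D-F)F=1$), contradicting the nefness of $H=\eta^*\OO_{\pp^8}(1)$. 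As $g(Q)$ is smooth we have $g^{-1}(g(Q))=\{Q\}$, hence $\phi^{-1}(g(Q))=F$, and the members of $|D|$ containing $F$ are precisely the pull-backs of the hyperplane sections of $T$ through $g(Q)$; since those hyperplane sections have base locus equal to the single point $g(Q)$, the fixed part of this subsystem of $|D|$ is exactly $F$ (with multiplicity one). But this subsystem is $F+|{-K_\Upsilon}|$, so $|{-K_\Upsilon}|$ has no fixed component, which gives all the assertions with $H'=-K_\Upsilon$ and $J=F$.
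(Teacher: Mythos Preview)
Your argument is correct and follows the same overall route as the paper: run the analysis of Theorem~\ref{thm:degree} for $n=8$, $m=11$, use the ``no pencil of degree $<4$'' hypothesis to exclude Steps 2, 4 and 6, land in Step~5 with $D^2=3$, $K_\Upsilon^2=2$, and then decompose $H=-2K_\Upsilon+(K_\Upsilon+D)$.

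The one difference is in how the last clause is obtained. You take $H'=-K_\Upsilon$ and then spend the final paragraph proving that $|{-K_\Upsilon}|$ has no fixed component via the explicit geometry of the cubic surface $T$ (identifying $K_\Upsilon+D$ with the single exceptional curve $F$ of $f\colon\Upsilon\to\tilde T$ and ruling out that the blown-up point lies on a $(-2)$-curve). The paper bypasses this entirely: it simply sets $H'$ equal to the \emph{moving part} of $|{-K_\Upsilon}|$ and absorbs the fixed part (if any) into $J$, together with the effective nonzero divisor $K_\Upsilon+D$. Since $h^0(H')=h^0(-K_\Upsilon)\ge 3$ and $J\ge K_\Upsilon+D>0$, all the claims follow immediately. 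Your approach does more work than strictly necessary, but it yields the sharper statement that $J$ is actually an irreducible $(-1)$-curve; the paper's shortcut trades that information for brevity. One small point: in your final step the assertion ``with multiplicity one'' deserves a word of justification (a generic hyperplane through the smooth point $g(Q)$ is not tangent to $T$ there, so the generic member meets $Q$ with multiplicity~$1$).
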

\begin{proof}
A surface satisfying the hypothesis is as in  Step 5 of proof of   theorem \ref{thm:degree}. 
By the proof  and keeping the same notation, one has that a surface   
of degree 11 in $\pp^8$  that has no pencil of curves of degree $<4$ satisfies $D^2= 3$, $K_\Upsilon D=-3$ and $K_\Upsilon^2=2$.   From this we have that $h^0(K_\Upsilon+D)\neq 0$ and that $K_\Upsilon+D\neq 0$. 
Since $\Upsilon $ is rational  and $K_\Upsilon^2=2$, $h^0(-K_\Upsilon)\geq 3$.  Then  $H$ decomposes as  $H=(-2K_\Upsilon)+ (K_\Upsilon+D)$  and taking the moving  part of 
$|-K_\Upsilon|$ we have the statement.
\end{proof}

\section{Irregular surfaces }\label{sec:irreg_surf}

In this section we collect several technical results that are needed in \S \ref{sec:castelnuovo}   and in \S \ref{sec:2q-3}, but are also, we believe, of independent interest. 

Throughout all the section we denote by  $S$  a smooth projective irregular surface, by $q>0$ the irregularity of $S$   and by  $a\colon S\to A:=\Alb(S)$ the Albanese map.

\subsection{Curves on irregular surfaces without irrational pencils}
\begin{lem}\label{lem:pa(D)} If  $D$ is an effective  $1$-connected divisor of $S$, then 
$<\!a(D)\!>$ has dimension $\le p_a(D)$. 
\end{lem}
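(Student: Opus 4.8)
The plan is to relate the dimension of $\langle a(D)\rangle$ to the rank of the restriction map on holomorphic $1$-forms. Set $V := H^0(S,\Omega^1_S) = H^0(A,\Omega^1_A)$, so $\dim V = q$, and let $r$ be the dimension of the image of the restriction $V \to H^0(D,\Omega^1_S|_D)$, or more precisely the corank of the map $V\to H^0(D,\omega_D)$ obtained by composing with $\Omega^1_S|_D \to \omega_D$ (using the conormal sequence). The key point is that $\langle a(D)\rangle$ has dimension equal to $q$ minus the dimension of the subspace of $1$-forms on $A$ that restrict to zero on $a(D)$, i.e. that pull back to zero on $D$; forms pulling back to zero on $D$ correspond to abelian subvarieties of $A$ whose quotient $A \to A'$ kills $a(D)$, hence the quotient abelian variety through which $a|_D$ factors has dimension exactly $\dim\langle a(D)\rangle$, and its cotangent space at $0$ is the image of $V$ in $H^0(D, a^*\Omega^1_{A'})$. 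So it suffices to bound this image by $p_a(D)$.

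First I would observe that the image of $V$ in $H^0(D,\omega_D)$ has dimension at most $h^0(D,\omega_D) = p_a(D)$, since $h^0(D,\omega_D) = h^1(D,\OO_D) = h^0(D,\OO_D) - 1 + p_a(D) = p_a(D)$ by $1$-connectedness (Proposition \ref{h0D}, which gives $h^0(D,\OO_D)=1$). The remaining thing to check is that the map $V \to H^0(D,\omega_D)$ has the same kernel as the map $V \to H^0(D,\Omega^1_S|_D)$, equivalently that no nonzero $\omega\in V$ restricts to a section of $H^0(D,\Omega^1_S|_D)$ landing in the image of the conormal bundle $N^\vee_{D/S} = \OO_D(-D) \to \Omega^1_S|_D$. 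Here I would use that a global $1$-form restricting into the conormal subsheaf would give a nonzero section of $\OO_D(-D)$, but actually the cleaner route is: if $\omega|_D$ maps to $0$ in $H^0(D,\omega_D)$ then $\omega|_D$ comes from $H^0(D, \OO_D(-D))$; tensoring the structure sequence $0\to\OO_S(-D)\to\OO_S\to\OO_D\to0$ and using $H^0(S,\OO_S(-D))=0$ shows such a section, pushed around, forces $\omega$ to vanish on $D$ as a $1$-form, i.e. $a^*$-image is killed — so the kernel is the same. Hence $\dim\langle a(D)\rangle = q - \dim\ker(V\to H^0(D,\omega_D)) = \operatorname{rank}(V\to H^0(D,\omega_D)) \le p_a(D)$.

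The step I expect to be the main obstacle is the bookkeeping identifying $\dim\langle a(D)\rangle$ with the rank of $V \to H^0(D,\Omega^1_S|_D)$: one must argue carefully that a $1$-form $\omega$ on $A$ vanishes on the abelian subvariety $\langle a(D)\rangle$ if and only if $\omega$ pulls back to $0$ on $D$ (not just that $da$ annihilates the smooth locus, but that it kills all of $D$ including singular/multiple components), and that the subgroup of such forms has dimension $q - \dim\langle a(D)\rangle$. This is where I would invoke that $\langle a(D)\rangle$ is generated by the connected set $a(D)$, so the complementary abelian subvariety is exactly the connected component of the common kernel of the corresponding quotient maps, whose cotangent space at the origin is $\ker(V\to H^0(D,a^*\Omega^1_A|_D))$ — and this last kernel coincides with $\ker(V\to H^0(D,\omega_D))$ by the previous paragraph. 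Everything else is a dimension count via Riemann–Roch on the curve $D$ together with the $1$-connectedness hypothesis.
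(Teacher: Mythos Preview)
Your Hodge-theoretic approach via the map $V=H^0(\Omega^1_S)\to H^0(D,\omega_D)$ is genuinely different from the paper's proof, which first reduces to $D$ reduced and then runs an induction on the number of irreducible components, invoking at each step the Jacobian of the normalization of one component together with a decomposition sequence. Your idea, once corrected, avoids the induction entirely.

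There is, however, a real gap in your execution. The claim that $\ker\bigl(V\to H^0(D,\omega_D)\bigr)=\ker\bigl(V\to H^0(D,\Omega^1_S|_D)\bigr)$ is false: if a component $C$ of $D$ is contracted by $a$, then every $\omega\in V$ satisfies $da|_{TC}=0$ and hence maps to zero in $\omega_D$, yet $\omega|_D\ne 0$ in $\Omega^1_S|_D$ unless $C$ lies in the zero divisor of $\omega$. Your conormal-sequence argument does not salvage this, and the identification of $\ker\bigl(V\to H^0(D,a^*\Omega^1_A|_D)\bigr)$ with the cotangent space of $A/\langle a(D)\rangle$ is also wrong: since $a^*\Omega^1_A\cong\OO_S^{\oplus q}$ is trivial and $h^0(\OO_D)=1$, that map is simply the identity $V\to V$, with zero kernel. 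The fix is to drop $\Omega^1_S|_D$ altogether. After reducing to $D=D_{\mathrm{red}}$ (using $p_a(D)\ge p_a(D_{\mathrm{red}})$, as the paper also does), argue directly that an invariant $1$-form $\eta$ on $A$ vanishes on $\langle a(D)\rangle$ if and only if $a^*\eta$ maps to zero in $H^0(D,\omega_D)$: both conditions are equivalent to $da_p(T_pD)\subset\ker\eta$ at every smooth point $p\in D$, since $a(D)$ is connected and $\eta$ is translation-invariant. This gives $\dim\langle a(D)\rangle=\operatorname{rank}\bigl(V\to H^0(D,\omega_D)\bigr)\le h^0(D,\omega_D)=p_a(D)$, which is the inequality you want.
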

\begin{proof} Write $D=D_{red}+A$, where $D_{red}$ is the support of $D$ and $A\ge 0$.  It is easy to show that $D_{red}$ is connected. Moreover, if  $A>0$ then the decomposition sequence:
$$0\to \OO_A(-D_{red})\to \OO_D\to \OO_{D_{red}}\to 0$$
shows that $p_a(D)=h^1(\OO_D)\ge h^1(\OO_{D_{red}})=p_a(D_{red})$. Hence we may assume that $D$ is reduced.

We prove the statement by induction on the number $n$ of irreducible components of $D$. If $n=1$, then there is a surjective morphism $$J\to <a(D)>$$ \noindent where $J$ is the Jacobian of the normalization of $D$. Since $J$ has dimension $g(D)\le p_a(D)$, the statement follows.

To prove the inductive step, write $D=C+D_1$, where $C$ is an irreducible curve and $D_1$ is a connected effective divisor with $n-1$ components. Since $D_1$ is connected,  the decomposition sequence gives an exact sequence:
$$0\to H^1(\OO_C(-D_1))\to H^1(\OO_D)\to H^1(\OO_{D_1})\to 0.$$
Thus we have $p_a(D)=p_a(D_1)+h^1(\OO_C(-D_1))\ge p_a(D_1)+p_a(D)$. To complete the proof it is enough to notice that $<\!a(D)\!>=<\!a(D_1)\!>+<\!a(C)\!>$.
\end{proof}
The next  lemma is a generalization of \cite[Proposition 8.2, (a)]{bnp}.

\begin{lem}\label{lem:D2}  Let $S$ be a surface such that  $\albdim(S)=2$ and   let $D>0$ be    a divisor of $S$ such that one of the  following conditions holds:
\begin{itemize}
\item $D$ is irreducible and $g(D)<q$;
\item  $D$ is $1$-connected and $p_a(D)<q$.
\end{itemize}
Then:
\begin{enumerate}
\item $D^2\le 0$;
\item  if $D^2=0$,   then there is a fibration $f\colon S\to B$, where $B$ is a curve of genus  at least $q-p_a(D)$,  and there exists a integer $m>0$ such that   $mD$ is a fibre of $f$.
\end{enumerate}
\end{lem}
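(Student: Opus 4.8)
The plan is to exploit the Albanese map together with the hypothesis $\albdim(S)=2$ to force $D^2\le 0$. First I would invoke Lemma \ref{lem:pa(D)}: in the $1$-connected case, $<\!a(D)\!>$ is an abelian subvariety $B_0\subseteq A$ of dimension $d\le p_a(D)<q$; in the irreducible case the same bound holds with $p_a(D)$ replaced by $g(D)$, and in fact Lemma \ref{lem:pa(D)} already covers the irreducible case after reducing (an irreducible curve is $1$-connected). Since $d<q=\dim A$, the quotient $A/B_0$ is a positive-dimensional abelian variety, and composing $a$ with the quotient $A\to A/B_0$ gives a morphism $g\colon S\to A/B_0$ that contracts every connected component of $D$ (hence contracts $D$, as $D$ is connected). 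Because $\albdim(S)=2$, the image $g(S)$ is still a surface (contracting a curve cannot drop the Albanese dimension), and a morphism from $S$ to a surface that contracts the curve $D$ forces $D^2\le 0$ by the Hodge index theorem / negativity of contracted curves — this gives (i). If one prefers to avoid Stein factorization subtleties, one can argue directly: pick two general $1$-forms pulled back from $A/B_0$; they are independent (since $\albdim$ drops by at most $\dim B_0$ when passing to $A/B_0$... ) and their restrictions to $D$ vanish, so $D$ moves in no pencil forcing $D^2>0$, or more cleanly, use that a nonzero effective divisor with $D^2>0$ is big, hence its image under any morphism to a lower-dimensional-or-contracting situation is impossible.

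For part (ii), suppose $D^2=0$. Then $D$ is nef on the subspace it spans and, being $1$-connected with $D^2=0$, it behaves like (a multiple of) a fibre. The key point is that $a(D)$ spans only the $d$-dimensional $B_0$, so the composed map $g\colon S\to A/B_0$ contracts $D$; take the Stein factorization $S\to B\to g(S)$, so that $B$ is a smooth curve and $D$ is contained in a fibre of $f\colon S\to B$. Since $D^2=0$ and $D$ is $1$-connected, $D$ is the whole fibre up to multiplicity: writing $F$ for the fibre through $D$, one has $D\le mF$ for suitable $m$ actually $D$ is numerically proportional to $F$ and by $1$-connectedness $D=mF_{0}$ where $F_0$ is the fibre, or rather $mD$ equals a fibre — here one invokes that $F\cdot D=0$, $F^2=0$, $D^2=0$, and Corollary \ref{Hodge2} (applied on a divisor of positive self-intersection, e.g. an ample one, writing things suitably) to conclude numerical proportionality, then $1$-connectedness of $D$ pins down the multiplicity. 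For the genus bound on $B$: the fibres of $f$ map into translates of $B_0$, so $a$ factors as $S\to \Alb(B)\times(\text{something})$ up to isogeny; comparing Albanese varieties, $g(B)\ge q - d\ge q - p_a(D)$, since $\Alb(S)$ is isogenous to a quotient of $\Alb(B)\times B_0$ and $\dim B_0=d\le p_a(D)$.

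The main obstacle I anticipate is the clean extraction of the fibration in part (ii): knowing that $g\colon S\to A/B_0$ contracts the connected curve $D$ does not a priori produce a fibration onto a \emph{curve} with $D$ in a fibre unless one is careful with the Stein factorization and checks that $D$ lands in a single fibre (it does, being connected and contracted) and that this fibre is irreducible/reduced enough to control the multiplicity $m$. The numerical statement "$mD$ is a fibre" needs the combination $D^2=0$, $D$ nef along the fibre direction, $1$-connectedness, and an index-theorem argument (Corollary \ref{Hodge2}) to rule out $D$ being a proper sub-configuration of a reducible fibre; this is the delicate bookkeeping. The genus estimate is then essentially the universal property of the Albanese applied to $f$, comparing $\Alb(S)$ with $\Alb(B)$ and the span $B_0=<\!a(D)\!>$.
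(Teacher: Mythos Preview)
Your overall strategy matches the paper's: quotient the Albanese by $B_0:=\langle a(D)\rangle$ and study the induced map $g\colon S\to A/B_0$. However, there is a genuine error in your argument for (i) that cascades into (ii).

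You assert that ``the image $g(S)$ is still a surface (contracting a curve cannot drop the Albanese dimension)''. This is false. The fact that $\albdim(S)=2$ only tells you that $a(S)\subset A$ is a surface; after projecting to $A/B_0$ the image can perfectly well be a curve. Indeed, if your claim were correct, the index-theorem argument would give $D^2<0$ strictly, and the case $D^2=0$ in (ii) would never occur --- so the very statement you are trying to prove shows that $g(S)$ is sometimes a curve. The paper's proof does not make this assumption: it splits into two cases. If $g(S)$ is a surface, then pulling back an ample class gives $H$ with $H^2>0$ and $HD=0$, so $D^2<0$ by the index theorem. If $g(S)$ is a curve, one gets a fibration $f\colon S\to B$ with $D$ contained in a fibre (since $D$ is connected and contracted); then Zariski's lemma gives $D^2\le 0$, with equality iff $mD$ is a full fibre. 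Part (i) is the union of these two conclusions, and part (ii) is precisely the second case.

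Your argument for (ii) also suffers from this confusion: you write ``take the Stein factorization $S\to B\to g(S)$, so that $B$ is a smooth curve'', but Stein factorization produces a curve $B$ only when $g(S)$ is already a curve, which you had just claimed was impossible. Once you accept the case split, the paper's route is much cleaner than yours: no need for isogeny decompositions of $\Alb(S)$ or Corollary~\ref{Hodge2}; the genus bound $g(B)\ge q-p_a(D)$ comes simply from the fact that $B$ surjects onto $g(S)$, which generates $A/B_0$, so $g(B)\ge\dim(A/B_0)\ge q-p_a(D)$, and the statement ``$mD$ is a fibre'' is exactly Zariski's lemma.
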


\begin{proof}  Consider the abelian variety $A':=A/\!\!<\!\!a(D)\!\!>$ and denote by $a'\colon S\to A'$ the map induced by $a$. By  the assumptions and by Lemma \ref{lem:pa(D)}, $\dim A'>0$ and  the image $Z$  of $a'$ generates $A'$ by construction.

Assume that $Z$ is a surface and let $H$ be the pull back to $S$ of a very ample line bundle of $Z$. Then $HD=0$, $H^2>0$, hence  $D^2<0$ by the index theorem. So  if $D^2\ge 0$ then $Z$ is a curve and $a'$ is composed with a pencil $f\colon S\to B$, where $B$ is a smooth curve. Since $B$ maps onto $Z$ and $Z$ generates $A'$, we have $g(B)\ge \dim A'\ge q-p_a(D)$.
Since by construction $a'$ contracts $D$ to a point and $D$ is connected, $D$ is contained in a fiber of $f$. By Zariski's lemma one has $D^2\le 0$ and $D^2=0$ if and only if there exists an integer $m>0$ such that $mD$ is a fiber of $f$. 
 \end{proof}
 

\begin{cor} \label{f1} Let $D>0$  be a $1$-connected divisor of $S$ such that $D^2=0$. If $b\ge 0$ is an integer such that  $S$ has no irrational pencil of genus $>b$, then:
$$K_SD\geq 2(q-b)-2.$$ \end{cor}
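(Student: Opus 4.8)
The plan is to combine Lemma \ref{lem:D2} with the genus estimate for the base curve of the pencil it produces. We are given that $D>0$ is $1$-connected with $D^2=0$, and we want a lower bound on $K_SD$. The key observation is that $p_a(D)$ controls $K_SD$ via adjunction: since $D^2=0$, we have $2p_a(D)-2 = K_SD + D^2 = K_SD$, so the claim $K_SD \ge 2(q-b)-2$ is equivalent to showing $p_a(D) \ge q-b$.

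First I would dispose of the case $\albdim(S)<2$ or check that we may assume $\albdim(S)=2$; actually, since $S$ has no irrational pencil of genus $>b$ but $q>0$, if $\albdim(S)=1$ the Albanese map is itself a pencil onto a curve of genus $q$, forcing $q\le b$, and then $2(q-b)-2\le -2 \le K_SD$ holds trivially (as $K_S$ is nef when $S$ is... but wait, the excerpt does not assume $S$ minimal here; still $K_SD\ge -2$ would need care). The cleaner route: assume $\albdim(S)=2$, since otherwise the statement is either vacuous or follows from a direct pencil argument. Then I would argue by contradiction: suppose $p_a(D) < q-b$, in particular $p_a(D)<q$, so the hypotheses of Lemma \ref{lem:D2} are satisfied (the $1$-connected case). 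Since $D^2=0$, part (2) of Lemma \ref{lem:D2} gives a fibration $f\colon S\to B$ with $g(B)\ge q-p_a(D) > b$. But this is an irrational pencil of genus $>b$, contradicting the hypothesis. Hence $p_a(D)\ge q-b$, and adjunction yields $K_SD = 2p_a(D)-2 \ge 2(q-b)-2$.

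The main obstacle I anticipate is making sure Lemma \ref{lem:D2} actually applies, i.e. that $\albdim(S)=2$ can be assumed without loss of generality, and handling the degenerate low-Albanese-dimension cases cleanly. If $\albdim(S)=2$ everything goes through as above. If $\albdim(S)\le 1$, then $q>0$ forces $\albdim(S)=1$ and the Albanese map factors through a fibration onto a curve of genus $q$; the no-irrational-pencil hypothesis then gives $q\le b$, so $2(q-b)-2\le -2$, and since $D>0$ is effective and $1$-connected with $D^2=0$, one has $p_a(D)\ge 1$ (by Proposition \ref{h0D}, $h^0(\OO_D)=1$, and $p_a(D)=h^1(\OO_D)\ge 0$; more precisely $D^2=0$ with $D$ connected and $S$ irregular forces $K_SD\ge 0$ in the relevant cases), so $K_SD = 2p_a(D)-2 \ge -2 \ge 2(q-b)-2$. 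A careful write-up should spell out this degenerate case, but it is routine; the substance of the corollary is entirely in the contradiction argument via Lemma \ref{lem:D2}.
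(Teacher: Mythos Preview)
Your proof is correct and is exactly the argument the paper intends: the corollary is stated without proof immediately after Lemma~\ref{lem:D2}, and your contradiction via adjunction ($K_SD=2p_a(D)-2$ when $D^2=0$) together with Lemma~\ref{lem:D2}(ii) is the implied derivation. Your handling of the degenerate case $\albdim(S)=1$ is a nice extra bit of care (note the minor slip: you want $p_a(D)\ge 0$, not $\ge 1$, which is what your parenthetical actually proves and is all you need for $K_SD\ge -2$).
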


 \subsection{Some properties of the canonical system}
In this section we assume that the canonical system $|K_S|\ne \emptyset$. We write  $p_g:=p_g(S)$ and $|K_S|=|M|+Z$, where $|M|$ is the moving part and $Z$ is the fixed part. We  denote by $\Sigma$ the canonical image and by $\fie \colon S\to \Si\subset \pp^{p_g-1}$ the canonical map. 

\begin{lem}[\cite{severi}, Lemma 2.1] \label{pg}  Let $\iota$ be an involution of $S$ such that $p_g(S/\iota)=0$. If $q\ge 3$ then $S$ has an irrational pencil $f\colon S\to B$,  where $g(B)\geq 2$.
\end{lem}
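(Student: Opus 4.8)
The plan is to exploit the action of $\iota$ on the holomorphic forms of $S$ together with the Castelnuovo--de Franchis theorem. First I would write $\pi\colon S\to \Si:=S/\iota$ for the quotient map, fix a resolution $\tilde\Sigma\to\Si$, and recall the standard fact that $\iota$-invariant holomorphic forms on $S$ descend to holomorphic forms on $\tilde\Sigma$ and conversely, so that $H^0(S,\Omega^1_S)^{+}\iso H^0(\tilde\Sigma,\Omega^1_{\tilde\Sigma})$ and $H^0(S,\omega_S)^{+}\iso H^0(\tilde\Sigma,\omega_{\tilde\Sigma})$. In particular the hypothesis $p_g(S/\iota)=0$ translates into $H^0(S,\omega_S)^{+}=0$, i.e. $\iota^*$ acts as $-1$ on $H^0(S,\omega_S)$.

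Next I would decompose $V:=H^0(S,\Omega^1_S)=V^{+}\oplus V^{-}$ into the $\pm1$-eigenspaces of $\iota^*$ and set $q^{\pm}:=\dim V^{\pm}$, so that $q^{+}+q^{-}=q\ge 3$. The crux of the argument is the observation that the wedge map $V\otimes V\to H^0(S,\omega_S)$ carries $V^{+}\otimes V^{+}$ and $V^{-}\otimes V^{-}$ into the invariant part $H^0(S,\omega_S)^{+}$: for $\al,\be$ lying in a common $\la$-eigenspace ($\la=\pm1$) one has $\iota^*(\al\wedge\be)=(\la\al)\wedge(\la\be)=\al\wedge\be$. Since $H^0(S,\omega_S)^{+}=0$, this forces $\al\wedge\be=0$ for all $\al,\be\in V^{+}$ and, likewise, for all $\al,\be\in V^{-}$.

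To conclude, I would use that $q^{+}+q^{-}\ge 3$ forces $\max\{q^{+},q^{-}\}\ge 2$. Picking two $\C$-linearly independent $1$-forms $\al,\be$ in the larger of the two eigenspaces, we have $\al\wedge\be=0$, so the Castelnuovo--de Franchis theorem yields a fibration $f\colon S\to B$ onto a smooth curve $B$ of genus $\ge 2$ with $\al,\be\in f^{*}H^0(B,\Omega^1_B)$. This $f$ is the required irrational pencil of genus $\ge 2$.

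Once the eigenspace decomposition is set up the argument is immediate, so the one point that really needs care is the identification $H^0(S,\omega_S)^{+}\iso H^0(\tilde\Sigma,\omega_{\tilde\Sigma})$ --- equivalently, that an $\iota$-invariant holomorphic $2$-form on $S$ descends to a resolution of $S/\iota$ and vice versa. I would justify this via the local structure of the double cover: over the locus where $\iota$ acts freely the quotient is étale and invariance makes the form descend; along a fixed curve the quotient is smooth and a short local computation in coordinates $(x,y)\mapsto(x,y^2)$ does it; and the isolated fixed points give only rational double points of $S/\iota$, over which holomorphic $2$-forms automatically extend to a resolution. I do not expect any obstacle beyond assembling these well-known facts.
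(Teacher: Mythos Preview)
The paper does not prove this lemma: it is quoted verbatim from \cite{severi}, Lemma~2.1, with no argument supplied. Your proposed proof is correct and is precisely the standard argument (and, as far as one can tell, the argument of the cited reference): decompose $H^0(\Omega^1_S)$ into $\iota^*$-eigenspaces, observe that $p_g(S/\iota)=0$ kills the $\iota$-invariant part of $H^0(\omega_S)$ so that each eigenspace is isotropic for the wedge pairing, pick two independent $1$-forms in the eigenspace of dimension $\ge 2$, and apply Castelnuovo--de~Franchis.

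One small remark: the careful local analysis you sketch for the identification $H^0(\omega_S)^{+}\iso H^0(\omega_{\tilde\Sigma})$ is not really needed for the argument. All you use is the implication $p_g(S/\iota)=0\Rightarrow H^0(\omega_S)^{+}=0$, and for this direction it suffices to note that pullback along the generically finite dominant map $S\to \tilde\Sigma$ injects $H^0(\tilde\Sigma,\omega_{\tilde\Sigma})$ into $H^0(S,\omega_S)^{+}$; the reverse inclusion (extension across the $A_1$ points) is true but superfluous here.
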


  \begin{cor}\label{ruled} Let $S$ be a minimal surface with $q\ge 3$. If $S$  has no irrational pencil of genus $\ge 2$   and  $\Si$ is    a surface   with $p_g(\Sigma)=0$,  then  $q(\Sigma)\leq 1$ and $\deg\fie\geq 3$.   
  \end{cor}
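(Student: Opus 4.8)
The plan is to use the canonical map $\fie\colon S\to \Si$ together with the Stein factorization to produce an involution (or a pencil), and then invoke Lemma \ref{pg} to get a contradiction with the hypothesis that $S$ has no irrational pencil of genus $\ge 2$. Since $|K_S|=|M|+Z$ and $\Si=\fie(S)$ is a surface, the moving part $|M|$ is not composed with a pencil, so $\fie$ is a generically finite morphism of degree $d:=\deg\fie\ge 1$ onto $\Si$. First I would rule out $d=1$: if $\fie$ were birational onto $\Si$, then $p_g(S)=p_g(\Si')$ for a desingularization $\Si'$ of $\Si$; but a resolution of $\Si$ dominates the canonical image, and one checks $p_g(\Si)\ge p_g(S)$ forces $p_g(\Si)=p_g(S)>0$ (recall $q\ge 3$ gives $p_g\ge 2q-3\ge 3$, in particular $p_g>0$), contradicting $p_g(\Si)=0$. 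So $d\ge 2$, which is the second assertion in the weaker form $\deg\fie\ge 2$; the sharper $\deg\fie\ge 3$ needs the involution argument below.

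Next I would show $q(\Si)\le 1$. Let $\nu\colon \widetilde{\Si}\to\Si$ be a desingularization; since $\Si\subset\pp^{p_g-1}$ is linearly normal as an image of the canonical map (being nondegenerate) one gets a surjection $H^0(S,K_S)\twoheadrightarrow H^0(\widetilde\Si, \text{(pullback of }\OO(1)\text{)})$-type comparison, but more directly: the Albanese map of $S$ factors through the pullback of $1$-forms. The composition $H^0(\widetilde\Si,\Omega^1)\to H^0(S,\Omega^1)$ is injective (as $\fie$ is dominant), and if $q(\Si)=q(\widetilde\Si)\ge 2$ then the Albanese image of $\widetilde\Si$ would give, by pulling back via the generically finite map, an irrational pencil or a map to a surface; since we are assuming $\albdim(S)\le 2$ is not directly at hand I would instead argue: a desingularized $\Si$ with $p_g=0$ and $q\ge 2$ is birationally ruled over a curve of genus $q(\Si)\ge 2$ (by the Enriques–Castelnuovo classification, a surface with $p_g=0$, $q\ge 1$ is ruled), and pulling this fibration back along $\fie$ produces an irrational pencil on $S$ of genus $q(\Si)\ge 2$, contradicting the hypothesis. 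Hence $q(\Si)\le 1$.

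Finally, for $\deg\fie\ge 3$: suppose $d=2$. Take the Stein factorization $S\xrightarrow{g}\overline S\xrightarrow{\pi}\Si$ with $\pi$ finite of degree $2$ and $g$ birational; after resolving $\overline S$ we may assume we have a degree $2$ morphism $S'\to \Si'$ between smooth surfaces, i.e. a birational model of $S$ carrying an involution $\iota$ with quotient birational to $\Si'$, so $p_g(S'/\iota)=p_g(\Si')=p_g(\Si)=0$. Since $q\ge 3$, Lemma \ref{pg} applies and yields an irrational pencil $f\colon S'\to B$ with $g(B)\ge 2$; this descends to an irrational pencil of genus $\ge 2$ on $S$, contradicting the hypothesis. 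Therefore $d\ge 3$. The main obstacle I anticipate is making the passage to an \emph{honest} involution on a smooth surface careful — the Stein factorization of $\fie$ need not have a smooth total space, and one must check that resolving singularities of the double cover does not destroy the property $p_g(\text{quotient})=0$ and that the resulting pencil genuinely pulls back to $S$ (rather than only to a blow-up, which is harmless since irrational pencils are birational invariants). The rest is routine application of the classification of surfaces with $p_g=0$, $q\ge 1$ and of Lemma \ref{pg}.
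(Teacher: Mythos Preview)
Your proposal is correct and follows essentially the same approach as the paper: the classification of surfaces with $p_g=0$ and $q\ge 2$ (they are ruled over a curve of genus $\ge 2$) to bound $q(\Sigma)$, and Lemma~\ref{pg} to exclude $\deg\fie=2$. The obstacle you anticipate is a non-issue: since $S$ is minimal of general type, every birational self-map of $S$ is biregular, so a degree-$2$ canonical map directly produces an honest involution $\iota$ on $S$ with $p_g(S/\iota)=p_g(\Sigma)=0$, and no Stein factorization or resolution gymnastics are needed.
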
 
  
   \begin{proof}   If  $q(\Sigma)\geq 2$ then,  by the  classification of surfaces,  $\Sigma$ must be a ruled surface  and this is a contradiction because the pull-back of the ruling of $\Sigma$ would give an irrational pencil with base of  genus $\geq 2$.  
   
   Since $p_g(\Sigma)=0$   the canonical map of $S$ is not birational and so by Lemma \ref{pg}  its degree must be $\geq 3$.  \end{proof}

The following result is essentially contained in  \cite{xiaopencil}: 

\begin{prop}\label{generation} If  $\albdim S=2$ and    $C$ is an irreducible curve of $S$,  then:
\begin{enumerate}
\item  if $h^0(S,C)=s\geq 2$, then $\fie(C)$ spans  at least a $\pp^{q-2}$ and $p_a(C)\ge 2q-3+s$. 
\item if $C$ is a general fiber of a fibration $f\colon S\to B$,  with $B$ a curve  of genus $b>0$, then $\fie(C)$ spans at least a  $\pp^{q-b-2}$.  
\end{enumerate}
\end{prop}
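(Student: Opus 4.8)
The plan is to exploit the classical fact that a smooth curve $C$ on an irregular surface carries, via the Albanese map, a large family of independent $1$-forms, and to combine this with the geometry of adjoint/canonical linear systems restricted to $C$. For part (i), let $C$ be irreducible with $h^0(S,C)=s\ge 2$. First I would observe that since $\albdim S = 2$, the curve $C$ cannot be contracted by the Albanese map, so the restriction $H^0(S,\Omega^1_S)\to H^0(C,\Omega^1_C)$ has image of dimension at least $q-1$ (one loses at most the forms that pull back the class of $C$ — more precisely, since $C$ moves in a pencil, the relevant kernel is controlled by $h^0(S, \mathcal{O}_S(C))$-type considerations, but moving in a pencil and $\albdim=2$ force the image to be at least $(q-1)$-dimensional). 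This gives a subspace $W\subseteq H^0(C,\omega_C)$ of dimension $\ge q-1$ coming from $H^0(S, K_S+C)$ via $\omega_C = \mathcal{O}_C(K_S+C)$; then $h^0(C, K_S|_C)\ge h^0(C,\omega_C) - \deg(\mathcal{O}_C(C)) $-type Riemann–Roch bookkeeping, together with the fact that $K_S|_C$ is the restriction of the canonical bundle and $|K_S|$ cuts on $C$ the system defining $\fie(C)$. The span of $\fie(C)$ being at least $\pp^{q-2}$ should come from: the image of $H^0(S,K_S)\to H^0(C, K_S|_C)$ contains (after subtracting the fixed part $Z$) at least the $(q-1)$-dimensional piece $W$ twisted down by the divisor $C|_C$, so $\dim|K_S|_C \ge q-2$.

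For the bound $p_a(C)\ge 2q-3+s$ in part (i), the key input is Proposition \ref{1con} (or rather its spirit) combined with a Clifford-type / base-point-free-pencil-trick argument on $C$. Since $|C|$ is a pencil of dimension $s-1$ on $S$, restricting to a general $C$ gives on $C$ a base-point-free $g^1_{C^2}$ when $C^2>0$, or forces $C^2=0$ and a fibration when $C^2=0$ (Lemma \ref{lem:D2}); but here $g(C)\ge q$ must fail the hypothesis of Lemma \ref{lem:D2}(1) only if $C^2\le 0$, so one should split into cases and use adjunction $2p_a(C)-2 = C^2+K_SC$. The cleanest route: the map $\fie|_C$ together with the $g^1$ cut by $|C|$ gives, by a theorem on curves mapping to projective space with a pencil (this is precisely the content attributed to Xiao in \cite{xiaopencil}), a lower bound on the genus; I expect one writes $p_a(C) \ge 2q-3+s$ by noting $\fie(C)$ spans $\ge \pp^{q-2}$, has a pencil of degree $C^2$ (or the moving part thereof), and then the genus of a nondegenerate curve in $\pp^{q-2}$ with such a pencil is at least $(q-2-1) + \text{(stuff from the pencil)}$, a Castelnuovo-type estimate.

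For part (ii), let $C$ be a general fibre of $f\colon S\to B$ with $g(B)=b>0$. The argument is parallel but now the relevant subspace of $1$-forms splitting off is the pullback $f^*H^0(B,\Omega^1_B)$, which is $b$-dimensional and dies on $C$; so the image of $H^0(S,\Omega^1_S)\to H^0(C,\Omega^1_C)$ has dimension $\ge q-b$. Pushing this through the same $\omega_C = K_S|_C \otimes \mathcal{O}_C(C)$ and $\mathcal{O}_C(C)=\mathcal{O}_C$ (since $C$ is a fibre, $C^2=0$, so $\omega_C = K_S|_C$!) identification, one gets that $|K_S|$ restricted to $C$ has dimension $\ge q-b-2$, whence $\fie(C)$ spans at least $\pp^{q-b-2}$. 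The simplification $C^2=0$ makes part (ii) genuinely easier than part (i).

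The main obstacle I anticipate is controlling precisely the dimension of the image of $H^0(S,\Omega^1_S)\to H^0(C,\Omega^1_C)$ and translating it correctly into a statement about the image of $H^0(S,K_S)$ in $H^0(C, K_S|_C)$ modulo the fixed part $Z$ of $|K_S|$ — i.e. making sure the "$-2$" in $\pp^{q-2}$ is sharp and not off by one, and handling the subtraction of $Z$ and of the base divisor of $|C|$ on $C$ carefully. The genus bound $p_a(C)\ge 2q-3+s$ in (i) will require the Castelnuovo/Xiao estimate for curves in projective space equipped with a low-degree pencil, and getting the exact constant $2q-3+s$ rather than something weaker is where the real work lies; I would look to reproduce the relevant computation from \cite{xiaopencil} adapted to this setup.
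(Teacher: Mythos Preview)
Your proposal has a genuine gap in the proof of the genus bound $p_a(C)\ge 2q-3+s$ in part (i), and you are making the span statement harder than it needs to be.

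For the span statement, the paper simply quotes Xiao \cite{xiaopencil}: for a curve $C$ moving in a linear system on a surface of maximal Albanese dimension, the image of the restriction map $H^0(S,K_S)\to H^0(C,K_S|_C)$ has dimension at least $q-1$. This is a statement about $2$-forms, and it \emph{immediately} gives that $\fie(C)$ spans at least a $\pp^{q-2}$. Your attempt to recover this from the restriction of $1$-forms $H^0(\Omega^1_S)\to H^0(\Omega^1_C)$ is the moral underpinning of Xiao's argument, but passing from $1$-forms to $2$-forms is exactly the nontrivial content of \cite{xiaopencil}; you should quote it rather than redo it. The same applies to part (ii): Xiao's result (see also \cite[Proposition 2.2]{severi}) gives directly that the image of $H^0(S,K_S)\to H^0(C,\omega_C)$ has dimension $\ge q-b-1$ for a general fibre $C$ of a genus-$b$ fibration.

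For the genus bound, your proposed ``Castelnuovo-type estimate for a curve in $\pp^{q-2}$ carrying a pencil'' will not produce a \emph{lower} bound on $p_a(C)$: Castelnuovo bounds go the wrong way. The paper's argument is entirely cohomological and much shorter. Take cohomology of the adjunction sequence for $C$:
\[
0\to H^0(K_S)\to H^0(K_S+C)\xrightarrow{r} H^0(C,\omega_C)\to H^1(K_S)\to 0,
\]
exact on the right since $H^1(K_S+C)=0$ by Ramanujam (or Kawamata--Viehweg) vanishing. Hence $p_a(C)=h^0(\omega_C)=q+\dim\operatorname{Im}r$. Now $\operatorname{Im}r$ contains the image of the multiplication map $H^0(K_S)\otimes H^0(C)\to H^0(\omega_C)$; restricting to $C$, the factors map to subspaces of $H^0(K_S|_C)$ and $H^0(\OO_C(C))$ of dimensions $\ge q-1$ (Xiao) and $s-1$ respectively, so by the Hopf lemma the image has dimension $\ge (q-1)+(s-1)-1=q+s-3$. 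Thus $p_a(C)\ge q+(q+s-3)=2q-3+s$. No Clifford, no base-point-free pencil trick, no projective geometry of $\fie(C)$ is needed, and the ``off by one'' worry you flag simply does not arise.
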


\begin{proof}  (i)  By \cite{xiaopencil},  if $S$ is an irregular surface of maximal Albanese   dimension and  $C$ is   a curve  of $S$ that moves in a linear system, then    the image of the restriction map $H^0(S,K_S)\to H^0(C, K_{S}|_{C}) $  has dimension   at least  $q-1$. Passing to cohomology, the adjunction sequence for $C$ gives:
$$0\to H^0(S,K_S)\overset{r}{\to} H^0(K_S+C)\to H^0(C,\omega_C)\to H^1(S,K_S)\to 0,$$
where exactness on the right follows by Ramanujam's or by Kawamata-Viehweg's vanishing. Hence we have:
$$p_a(C)=h^0(C,\omega_C)\ge h^1(S, K_S)+\dim {\rm  Im} r=q+\dim {\rm  Im} r.$$
The subspace $ {\rm  Im} r$ contains the image of $H^0(S, K_S)\otimes H^0(S,C)$, hence it has dimension $\ge (q-1)+s-2=q-3+s$.

 (ii)  Also by \cite{xiaopencil} (see \cite[Proposition 2.2]{severi}),  given a pencil $f\colon S\to B$  with general fibre $C$  and such that $g(B)=b$ the image of the restriction map $H^0(S,K_S)\to H^0(C, \omega_C )$ has dimension at least $q-b-1$.  \end{proof}

Following \cite{konno}, we define the {\em quadric hull\/} $\Quad(S)$  of a surface  of general type $S$ as the intersection of all the quadrics of $\pp^{p_g-1}$ that contain the canonical image  $\Sigma$. A component of $\Quad(S)$ is said to be {\em essential}  if it contains  $\Sigma$; the {\em quadric dimension\/} $\dimQuad(S)$ is  the maximum dimension of an essential  component of $\Quad(S)$. 
We quote the following:

\begin{prop}[\cite{cmp}, Proposition 2.4] \label{3fold}
Let $X\subset \pp^{r+1}$ be a non degenerate irreducible threefold  and let $\ga$ be the arithmetic genus of a general curve section of $X$.  Then:
\begin{enumerate}
\item if  $\ga=0$, then $X$ is either a rational normal scroll or $X\subset\pp^6$ is the cone over the Veronese surface in $\pp^5$;
\item if $\ga=1$ and $X$ is not a scroll then   $r\leq 9$;
\item if $\ga=2$  and $X$ is not a scroll  then   $r\leq 11$.
\end{enumerate}
\end{prop}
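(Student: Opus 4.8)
The statement is cited from \cite{cmp}, so the plan is to reconstruct the classical argument via iterated general hyperplane sections, reducing the threefold statement to facts about curves. First I would take a general curve section $C = X \cap \pp^2 \cap \cdots$, obtained by slicing $X \subset \pp^{r+1}$ with $r-1$ general hyperplanes; by Bertini $C$ is a smooth irreducible non-degenerate curve in $\pp^{r-1}$ of degree $d = \deg X$ and arithmetic (= geometric) genus $\ga$. The key classical input is Castelnuovo's bound together with the description of curves meeting it: a non-degenerate irreducible curve of degree $d$ in $\pp^{r-1}$ has genus $0$ forcing $d \ge r-1$ with equality iff $C$ is a rational normal curve; genus $1$ forcing $d \ge r$; and more generally $\ga \le \pi(d, r-1)$ where $\pi$ is the Castelnuovo function. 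Dually, slicing down one more step, a general hyperplane section of $X$ is a non-degenerate surface $Y \subset \pp^r$ whose general curve section is this $C$; so $\deg Y = d$ and one knows $Y$ is a surface of small degree relative to its embedding dimension.

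The heart of the argument is then a degree estimate. For case (i), $\ga = 0$ means the general curve section is rational of degree $d$ in $\pp^{r-1}$; Castelnuovo's genus bound degenerates and one gets $d \le r$ (i.e. $d$ is at most one more than the minimal degree $r-1$). Surfaces of minimal degree $d = r-1$ in $\pp^r$ are the rational normal scrolls and the Veronese; threefolds of minimal degree $d = r$ in $\pp^{r+1}$ are rational normal scrolls and the cone over the Veronese in $\pp^6$ — this is the classical Bertini classification of varieties of minimal degree (and its one-step extension for the $d = r$ "del Pezzo"-type borderline, which in the threefold case reduces to scroll-or-Veronese-cone after checking the possible non-scroll cases do not exist beyond $\pp^6$). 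For cases (ii) and (iii), I would suppose $X$ is not a scroll and bound $r$ by pushing the Castelnuovo-type inequality: when $\ga = 1$ the general curve section is an elliptic normal curve-type object of degree $d$ in $\pp^{r-1}$, forcing $d \ge r$, while non-scroll-ness forces $d$ to be small (roughly $d \le r+1$ or so), and combining these with the structure of del Pezzo threefolds (the non-scroll, non-degenerate threefolds with elliptic curve section are exactly the del Pezzo threefolds, classified, with $r \le 9$) yields $r \le 9$; similarly the $\ga = 2$ case corresponds to the classification of "$\ga = 2$" or "second-kind del Pezzo" threefolds, bounded by $r \le 11$. Concretely I would invoke the classification of projective threefolds whose general hyperplane section is a del Pezzo surface (resp. a surface with sectional genus $2$), e.g. via the work of Fujita or the earlier Reid/Xiao-type analyses already cited in \S 3.

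The main obstacle will be the case-by-case classification of the borderline threefolds in (ii) and (iii) — ruling out sporadic non-scroll examples in high-dimensional projective spaces and pinning down the exact bounds $9$ and $11$. This is exactly where one must either cite a classification of del Pezzo (and sectional-genus-$2$) threefolds or redo the induction carefully, checking that the only ways to have $\ga$ small with $X$ not a scroll are the listed ones and that they embed in at most $\pp^9$ (resp. $\pp^{11}$). Since the proposition is quoted verbatim from \cite{cmp}, the cleanest route in this paper is simply to refer to that source for the detailed classification, using here only the general curve-section reduction to explain why such a bound is plausible; but a self-contained argument would hinge on the two-step hyperplane section reduction described above combined with Castelnuovo theory for curves and the Bertini/Fujita classification of varieties of small sectional genus.
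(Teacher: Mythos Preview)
The paper does not prove this proposition; it is simply quoted from \cite{cmp}, Proposition 2.4, with no argument given. Your closing remark---that the cleanest route is to cite \cite{cmp}---is precisely what the authors do.

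Your reconstruction, however, contains a genuine error in case (i). You write that $\gamma = 0$ makes Castelnuovo's bound ``degenerate and one gets $d \le r$'', but Castelnuovo's inequality is an \emph{upper} bound on the genus in terms of the degree; setting $\gamma = 0$ gives no upper bound on $d$ whatsoever---a nondegenerate rational curve in $\pp^{r-1}$ can have arbitrarily large degree. The actual mechanism for (i) is that, after passing to the linearly normal model, a rational curve section forces $d = r - 1$ (the curve is then a rational normal curve), so $X$ is a variety of minimal degree, and one invokes the classical del Pezzo--Bertini classification of such varieties. A similar reversal appears in your sketch of (ii), where you assert that ``non-scroll-ness forces $d$ to be small''; it does not. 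The route for (ii) and (iii) goes instead through the general surface section $Y \subset \pp^r$: when $\gamma = 1$ and $X$ is not a scroll, $Y$ is (up to resolution) a weak del Pezzo surface in its anticanonical embedding, so $r \le K_Y^2 \le 9$; the case $\gamma = 2$ is handled analogously via the classification of surfaces of sectional genus $2$. Your identification of Fujita/del Pezzo--type classification as the relevant input for (ii) and (iii) is correct in spirit, but the logical flow of the degree bounds in your sketch runs backwards.
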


\begin{prop}\label{prop:quadric}
Let  $S$ be a  surface such that $\albdim S=2$ and  $\fie$ is birational. Then:
\begin{enumerate}
\item if $p_g\ge 8$ and $q\ge 5$, then $h^0(2M)\ge 4p_g-5$; 
\item if $p_g\ge 12$, $q\ge 6$,  then  $h^0(2M)\ge 4p_g-4$;
\item if  $p_g\ge 14$, $q\ge 7$, then  $h^0(2M)\ge 4p_g-3$.
\end{enumerate}
\end{prop}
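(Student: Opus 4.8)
The plan is to convert a lower bound for $h^0(2M)$ into an upper bound for the number of quadrics through the canonical image $\Sigma$, and then to estimate that number by analysing the quadric hull $\Quad(S)$. Let $\delta$ be the dimension of the space $I_2$ of quadrics of $\pp^{p_g-1}$ containing $\Sigma$. The multiplication map $\mathrm{Sym}^2H^0(S,K_S)\to H^0(S,2K_S)$ has kernel exactly $I_2$, and after removing the fixed part $Z$ of $|K_S|$ and the base locus of $|M|$ its image lies in $H^0(2M)$; hence
\[
h^0(2M)\ \ge\ \binom{p_g+1}{2}-\delta .
\]
Since $\binom{p_g+1}{2}-\binom{p_g-3}{2}=4p_g-6$, it suffices to prove $\delta\le\binom{p_g-3}{2}-1$ under the hypotheses of (i), $\delta\le\binom{p_g-3}{2}-2$ under those of (ii), and $\delta\le\binom{p_g-3}{2}-3$ under those of (iii). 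Now let $W$ be the essential component of $\Quad(S)$ and $k:=\dim W=\dimQuad(S)$. Every quadric of $I_2$ contains $W$, so $I_2(\Sigma)=I_2(W)$ and $\delta=i_2(W)\le\binom{p_g-k}{2}$, with equality only if $W$ has minimal degree. If $k\ge 4$ this already gives $\delta\le\binom{p_g-4}{2}=\binom{p_g-3}{2}-(p_g-4)$, which is more than enough since $p_g\ge 8$ in all three cases. So one is reduced to $k\in\{2,3\}$.

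For $k=3$ the idea is to combine Proposition \ref{3fold} (applied to $W$, with ``$r$'' there equal to $p_g-2$, since a general curve section of $W$ lies in $\pp^{p_g-3}$) with Proposition \ref{generation}. By Proposition \ref{3fold} the threefold $W$ is either a rational normal scroll, or the cone over the Veronese surface in $\pp^6$ (excluded by $p_g\ge 8$), or a scroll over a curve of genus $b\ge 1$, or a non-scroll whose curve section has genus $\gamma\in\{1,2\}$ (forcing $p_g\le 11$, resp.\ $p_g\le 13$), or a threefold with curve section of genus $\gamma\ge 3$. When $W$ is a scroll it is ruled by planes, and the ruling induces a fibration $f\colon S\to B$ with $B$ of genus $b$ for which $\fie(F)$ is contained in a plane for a general fibre $F$; Proposition \ref{generation} then yields $q-b-2\le 2$, i.e.\ $b\ge q-4$, which in particular rules out the rational scroll ($b=0$) for $q\ge 5$. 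A Cohen--Macaulay computation of $h^0(W,\OO_W(2))$ — from the exact sequences of a general surface section and curve section of $W$ together with Riemann--Roch on the latter — gives $i_2(W)=\binom{p_g-3}{2}-b$ for a scroll of base genus $b$ and $i_2(W)=\binom{p_g-3}{2}-\gamma$ for a non-scroll with curve section of genus $\gamma$. Feeding in $b\ge q-4$ in the scroll case, the bounds $p_g\ge 12$ (resp.\ $p_g\ge 14$) in the non-scroll cases $\gamma=1$ (resp.\ $\gamma=2$), and $\gamma\ge 3$ in the remaining case, one gets in each instance exactly the required bound on $\delta$; note that the scroll case produces $h^0(2M)\ge 4p_g+q-10$, which matches the three thresholds $q\ge 5,6,7$.

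It remains to treat $k=2$, i.e.\ $W=\Sigma$, so that $\Sigma$ is cut out by quadrics. Here there is no structure theorem to invoke directly; instead one exploits that, $\fie$ being birational, $\Sigma$ is birational to the minimal surface of general type $S$ with $\albdim S=2$, and uses the refinement of Debarre's inequality established in this section (a lower bound for $M^2$), together with — when $|K_S|$ has a nontrivial fixed part — the results on curves on irregular surfaces, to show that $\Sigma$ is too far from a variety of minimal degree for $\delta$ to enter the forbidden range. I expect this last case to be the main obstacle, since the estimate has to be squeezed out of the numerical geometry of irregular surfaces of maximal Albanese dimension while keeping careful track of $Z$; a secondary technical nuisance is the treatment of the various cones that occur among the threefolds of minimal and almost-minimal degree in the $k=3$ step, where the ruling argument based on Proposition \ref{generation} must be adapted (one projects from the vertex and uses that $S$ is not rational or ruled).
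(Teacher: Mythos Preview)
Your overall framework (translate to a bound on $\delta=i_2(\Sigma)$ and analyse the quadric hull) is correct, and the $k\ge 4$ step is fine. But the proposal has two real problems.

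\textbf{The case $k=2$ is a genuine gap.} You label it ``the main obstacle'' and give no argument, only a hope involving Debarre's inequality and curves on irregular surfaces. That hope does not pan out: when the quadrics through $\Sigma$ cut out only $\Sigma$ itself there is no auxiliary variety whose structure one can exploit, and none of the numerical results of this section give usable control on $i_2(\Sigma)$. The paper's key idea is to show that $k=2$ \emph{never occurs} under the contradiction hypothesis $h^0(2M)=4p_g-6+\alpha$, $\alpha\in\{0,1,2\}$. Writing $r=p_g-2$ and taking general curve and point sections $C,Z$ of $\Sigma$, the chain
\[
h^0(2M)\ \ge\ h_\Sigma(2)\ \ge\ (r+2)+h_C(2)\ \ge\ (2r+3)+h_Z(2)
\]
forces $h_Z(2)\le 2r-1+\alpha$. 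Since $Z$ consists of $\deg\Sigma\ge 3r-1$ points in uniform position in $\pp^{r-1}$, the Castelnuovo--Harris bound gives $h_Z(2)\ge 2r-1$, and the structure theorems of Harris, Ciliberto and Petrakiev for $h_Z(2)\in\{2r-1,2r,2r+1\}$ say that the quadrics through $Z$ cut out an irreducible \emph{curve}. A general codimension-$2$ section of $\Quad(S)$ therefore contains a curve, so $\dimQuad(S)\ge 3$. This Hilbert-function step is precisely the missing ingredient.

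\textbf{The ``Cohen--Macaulay computation'' for $k=3$ is not justified and points the wrong way.} Scrolls over a curve of positive genus are not arithmetically Cohen--Macaulay, so the exact-sequence computation of $h_W(2)$ from the curve section fails; and in any case a computation of $h^0(W,\OO_W(2))$ only yields $h_W(2)\le h^0(W,\OO_W(2))$, i.e.\ a \emph{lower} bound on $i_2(W)$, whereas you need an upper bound. (For part~(i) you can get by with the cruder observation that $i_2(W)=\binom{p_g-3}{2}$ forces $W$ to have minimal degree, hence to be a rational scroll or the Veronese cone, both excluded; but for (ii) and (iii) your formula is essential and unjustified.) The paper avoids computing $i_2(W)$ altogether: the same bound $h_Z(2)\le 2r-1+\alpha$ already forces the curve section of any $3$-dimensional essential component to have arithmetic genus $\le\alpha$, so Proposition~\ref{3fold} makes it a scroll in planes over a curve of genus $\le\alpha$, and Proposition~\ref{generation} with $q\ge 5+\alpha$ gives the contradiction. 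The residual cases $\dimQuad(S)\in\{4,5\}$ for $\alpha\ge 1$ are then dispatched by Konno's lemma, which is essentially your $k\ge 4$ bound.
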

\begin{proof}
Set $r=p_g-2$. Notice that by the Castelnuovo inequality (cf. \cite[Remarques 5.6]{beauville}) we have $\deg\Si\ge 3p_g-7\ge 3r-1$.  

It is well known (cf. \cite{deb1}, \cite{Reid_quadrics}, \cite{barjaosaka}) that $h^0(2M)\geq 4p_g-6=4r+2$. 
We argue by contradiction, writing  $h^0(2M)=4r+2+\alpha$ and assuming that  one of the following holds:
\begin{itemize}
\item $\alpha=0$,  $r\ge 6$ and $q\ge 5$; 
\item $\alpha=1$,  $r\ge 10$ and $q\ge 6$; 
\item $\alpha=2$, $r\ge 12$ and $q\ge  7$.
\end{itemize}

For a non degenerate projective variety $Y\subset \pp^{r+1}$ and $m\ge 0$ an integer, denote as usual  by $h_Y(m)$ the {\em Hilbert function} of $Y$, namely the dimension of the image of the restriction map $H^0(\OO_{\pp^{r+1}}(m))\to H^0(\OO_Y(m))$. 
In what follows we use some basic properties of the Hilbert function, for which we refer the reader to \cite{Harris_curve}.

Let $C$ be a general section of the canonical image $\Si$ and let $Z$ be a general section of $C$. The set $Z$ consists of $\deg\Si\ge 3r-1$ points in uniform position and one has:
 \begin{equation}\label{eq:hilbert}
 4r+2+\alpha =h^0(2M)\ge h_{\Sigma}(2)\ge r+2+h_C(2)\ge 2r+3 +h_Z(2),
\end{equation}
namely $h_Z(2)\le 2r-1+{\alpha}$.
\medskip

\noindent{\bf Step 1:} {$\dimQuad(S)\ge 3$}

By  \cite[Lemma 3. 9]{Harris_curve} one has $h_Z(2)\ge 2r-1$. Hence  by \eqref{eq:hilbert}, there are  the following possibilities: 
\begin{itemize}
\item[(a)] $h_Z(2)=2r-1$. By \cite[Lemma 3. 9]{Harris_curve},  in this case the intersection of all quadrics through $Z$ is  a rational normal curve in $\pp^{r-1}$;

\item[(b)] $h_Z(2)=2r$.  By \cite[p. 109]{Harris_curve}, in this case the intersection of all quadrics through $Z$ is  a rational normal elliptic curve of degree $r$ in $\pp^{r-1}$.

\item[(c)] $h_Z(2)=2r+1$. Since $p_g\ge 8$,  by   \cite [Theorem 3.8] {Ciro_curve} (cf. also \cite[Proposition 4.3]{petrakiev}), in this case the intersection of all quadrics through $Z$ is  an irreducible  curve of degree $r+1$ in $\pp^{r-1}$.
\end{itemize} 
 In each case, the intersection of all the quadrics   of $\pp^{r-1}$ containing $Z$ is an irreducible curve $\Gamma$. If $V$ is an essential component of $\Quad(S)$, then $\Quad(S)\cap \pp^{r-1}$ contains $\Gamma$. Since $\pp^{r-1}\subset \pp^{r+1}$ is a general codimension 2 subspace,  it follows that $\dim V\ge 3$.
 \medskip

 \noindent{\bf Step 2:} {\em $\Quad(S)$  has no essential component of dimension 3.}\newline
 Assume for contradiction that an essential component $V$ of $\Quad(S)$ of dimension 3 exists.  Then by the proof of Step 1,  the general curve section $\Gamma$  of $V$ has arithmetic  genus $\le \alpha$.
Hence, in view of  our  assumptions on $r$ and $\alpha$, by Proposition \ref{3fold} $V$ is a scroll in planes. If $\Gamma$ is rational, as it is always the case for $\alpha=0$, we let $|C|$ be the  pencil of $S$ induced by the ruling of $V$. Since  $q\ge 5$ by assumption, we have a contradiction to Proposition \ref{generation}.  If $\gamma$ has geometric  genus $b>0$,  we let $B\to \Ga$ be the normalization map and $f\colon S\to B$ the fibration induced by the ruling of $V$. Since $b \le \al$ and  $q\ge 5+\al$ by assumption, we have again a contradiction to Proposition \ref{generation}.
\medskip

\noindent{\bf Step 3:} {\em $\dimQuad(S)\le 3+\al$.}\newline
If  $\al=0$, (cf. also \cite{konno}),  $\Quad(S)$ is a threefold  by \cite[Proposition 1.2]{barjaosaka}. 

Consider now $\al>0$ and assume  for contradiction that $\dimQuad(S)\ge 4+\al$. Since the  quadrics through $Z$ cut  out a curve in $\pp^{r-1}$ (cf. proof of Step 1), it follows that the image of  the restriction map $\rho \colon H^0(\pp^{r+1},\mathcal I_{\Si}(2))\to H^0(\pp^{r-1}, \mathcal I_Z(2))$ is a subspace of codimension $\ge 1+\al$. Since $Z\subset C\subset \Si$ are general sections and $\Si$ is non degenerate, the sequences
 $0\to \mathcal I_{\Si}(1)\to  \mathcal I_{\Si}(2)\to\mathcal I_C(2)\to 0$
and $0\to \mathcal I_{C}(1)\to  \mathcal I_{C}(2)\to\mathcal I_Z(2)\to 0$ are exact. Taking cohomology, one sees that the restriction maps $H^0(\pp^{r+1}, \mathcal I_{\Si}(2))\to H^0(\pp^r, \mathcal I_C(2))$ and $H^0(\pp^r, \mathcal I_C(2))\to H^0(\pp^{r-1}, \mathcal I_Z(2))$ are injective.
Hence $\rho$, being the composition of these maps, is also  injective and we get  $h^0(\pp^{r-1},\mathcal I_Z(2))\ge h^0(\pp^{r+1},\mathcal I_{\Sigma}(2))+1+\al$. Passing to the Hilbert functions, we obtain:
\begin{gather}
h_{\Sigma}(2)-h_Z(2)=\\
\nonumber \frac{(r+2)(r+3)}{2}-h^0(\pp^{r+1},\mathcal I_{\Si}(2))-\frac{r(r+1)}{2}+h^0(\pp^{r-1}, \mathcal I_Z(2))
\ge 2r+4+\al.\
\end{gather}
Since $h_Z(2)\ge 2r-1$, we get  $4r+2+\al=h_{\Si}(2)\ge 4r+3+\al$, 
  a contradiction.
\medskip

\noindent{\bf Step 4:} {\em End of proof.}\newline
If $\al=0$, then we have a contradiction by Step 2 and Step 3.

If $\al=1$, then by Step 2 and Step 3 we have $\dimQuad(S)=4$.   By \cite[Lemma 1.2]{konno}, we have:
$$4r+3=h^0(2M)\ge  h_{\Si}(2)\ge 5p_g-10=5r,$$
a contradiction since $r\ge 4$.

If $\al=2$, then by Step 2 and Step 3 we have $\dimQuad(S)=4$ or $5$.   By \cite[Lemma 1.2]{konno}, we have:
$$4r+4=h^0(2M)\ge  h_{\Si}(2)\ge\min\{ 5p_g-10, 6p_g-15\}=5r,$$ 
 and we have again a contradiction since $r\ge 5$.
\end{proof}

    \begin{lem}\label{z=-1}  Let        $S$ be a minimal surface  with $q\ge 3$ and  no irrational pencil  of genus $\ge 2$ and let $D$ is a divisor of $S$ such that:
      \begin{itemize}
   \item  $D^2\geq 6$,  $h^0(D)\geq 4$ and $|D|$ has no fixed component;
 \item $F:=K_S-D>0$ and $K_SF < 2q-4$.
   \end{itemize}
    Then  for any effective divisor $E$ such that $E^2=-1$, $K_SE=1$ and $DE=2$,  $h^0(K_S+D)\geq  h^0(K_S+D-E)+3 $.  
    \end{lem}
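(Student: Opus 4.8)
The natural approach is to analyze the long exact cohomology sequence associated to restriction to $E$. From the exact sequence
$$0\to \OO_S(K_S+D-E)\to \OO_S(K_S+D)\to \OO_E((K_S+D)|_E)\to 0$$
one gets $h^0(K_S+D)\ge h^0(K_S+D-E) + h^0(E,(K_S+D)|_E) - h^1(S,K_S+D-E)$, so the claim will follow if I can show $h^0(E,(K_S+D)|_E)=3$ and $h^1(S,K_S+D-E)=0$. The line bundle $(K_S+D)|_E$ has degree $K_SE+DE+E^2 = 1+2-1 = 2$ on $E$, and since $p_a(E)=1$ (from $E^2=-1$, $K_SE=1$), Riemann--Roch on $E$ gives $\chi((K_S+D)|_E) = 2$, so $h^0(E,(K_S+D)|_E) = 2 + h^1(E,(K_S+D)|_E)$. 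The first task is thus to show $h^1(E,(K_S+D)|_E)=1$; by Serre duality on the Gorenstein curve $E$ this equals $h^0(E, \omega_E\otimes(-(K_S+D)|_E)) = h^0(E, (-D)|_E)$. Here I would invoke the $1$-connectedness of $E$ (Lemma~\ref{tail}, noting $K^2_S>1$ since $p_g\ge 2q-3$ and $q\ge 6$, or more directly since $D^2\ge 6$): a $1$-connected curve has $h^0(\OO_E)=1$, and then one argues that the degree-$(-2)$ bundle $(-D)|_E$ has $h^0=1$ precisely because $D|_E$ is the "right" kind of bundle — concretely, $D$ nef-ish and $DE=2$ forces $D|_E \cong \OO_E$ when restricted appropriately, or one compares with $\omega_E$. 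The cleanest route: show $h^0(E, D|_E) \ge 2$ would be too strong; instead show $D|_E$ has a section vanishing nowhere, using that $|D|$ has no fixed component and meets $E$ in a divisor of degree $2$ moving in a pencil-like family, forcing $\OO_E(D|_E)\cong\OO_E$ or a degree-$2$ bundle with $h^1=1$.

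**Vanishing of $h^1(S,K_S+D-E) = h^1(S,K_S+F-E)$ where $F = K_S-D$.** For this I would use Ramanujam/Kawamata--Viehweg-type vanishing applied to $K_S + (D-E)$, so it suffices to check that $D-E$ is nef and big, or at least $1$-connected with positive self-intersection. We have $(D-E)^2 = D^2 - 2DE + E^2 = D^2 - 5 \ge 1$ since $D^2\ge 6$, and $K_S(D-E) = K_SD - 1$. So $(D-E)^2 > 0$; the remaining point is connectedness/nefness of (a curve in) $|D-E|$, which I would extract from the hypothesis $h^0(D)\ge 4$, $|D|$ fixed-part-free, combined with $DE=2$: one shows $|D-E|$ is nonempty of positive dimension and applies Proposition~\ref{1con} after checking $D-E$ is nef. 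If $D-E$ fails to be nef there is a curve $\theta$ with $\theta(D-E)<0$, and since $D$ is fixed-component-free and nef-ish this forces $\theta$ to be a component of $E$ with controlled intersection numbers, leading to a contradiction with $E^2=-1$, $K_SE=1$ via the index theorem — exactly the style of argument in the proof of Lemma~\ref{tail}.

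**Where the genus-pencil and Castelnuovo hypotheses enter.** The condition $K_SF < 2q-4$ together with "no irrational pencil of genus $\ge 2$" is what I expect to be used to rule out $D-E$ (or a relevant sub-curve) having self-intersection $0$ and dragging in a high-genus fibration: by Corollary~\ref{f1}, a $1$-connected divisor with square $0$ on such a surface has large $K_S$-degree, contradicting the bound $K_SF<2q-4$. So the logical skeleton is: (a) $(D-E)^2\ge 1 >0$, hence $D-E$ is not in the degenerate case; (b) $D-E$ is nef (the index-theorem argument against components of $E$); (c) Kawamata--Viehweg gives $h^1(S,K_S+D-E)=0$; (d) $h^0(E,(K_S+D)|_E)=3$ from the $1$-connectedness of $E$ and the degree-$2$ computation on $E$; (e) assemble via the cohomology sequence.

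**Main obstacle.** The delicate step is (d), pinning down $h^0(E,(K_S+D)|_E)$ exactly rather than just $\ge 2$ — equivalently showing $h^1(E,(K_S+D)|_E)=1$, i.e. $h^0(E,(-D)|_E)=1$. A priori a degree-$(-2)$ line bundle on a genus-$1$ Gorenstein curve can have $h^0\in\{0,1\}$ depending on the curve's structure and the bundle, and one must use that $E$ is $1$-connected (so $\OO_E$ has $h^0=1$) together with the specific nature of $D$ (fixed-component-free, $DE=2$, $D^2\ge 6$) to force $(-D)|_E \cong \OO_E$ up to the ambiguity that keeps $h^0=1$; concretely I would show $D|_E$ is trivial by exhibiting that the two points of $D\cap E$ move, via the pencil in $|D|$, to impose that $\OO_E(D|_E)$ is in the image of restriction from a base-point-free-ish system, hence $\cong\OO_E$ (a $1$-connected genus-$1$ curve has trivial bundles as its only degree-$0$ bundles with sections). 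This is the calculation I would need to carry out carefully; everything else is routine bookkeeping with the index theorem and Kawamata--Viehweg vanishing.
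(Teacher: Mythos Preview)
Your skeleton has the right shape (restrict to $E$, use Riemann--Roch on $E$, then kill $H^1(K_S+D-E)$ by a Ramanujam-type vanishing), but there is a computational slip that misleads you about where the real difficulty lies.

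\textbf{The degree on $E$ is $3$, not $2$.} The line bundle $(K_S+D)|_E$ has degree $(K_S+D)\cdot E = K_SE + DE = 1+2 = 3$; you should not add $E^2$. With $p_a(E)=1$, Riemann--Roch gives $\chi((K_S+D)|_E)=3$, hence $h^0(E,(K_S+D)|_E)\ge 3$ immediately. So your ``main obstacle'' (d) evaporates: there is nothing to prove about $h^1(E,(K_S+D)|_E)$, and in particular no need to argue that $D|_E$ is trivial. (Incidentally, your Serre-duality computation is also off: $\omega_E\otimes (K_S+D)|_E^{-1}=(E-D)|_E$, not $(-D)|_E$.)

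\textbf{The actual obstacle is the $1$-connectedness of $D-E$.} Once one knows $h^0(E,(K_S+D)|_E)\ge 3$, the whole lemma reduces to surjectivity of the restriction map, i.e.\ to $H^1(K_S+D-E)=0$. For Ramanujam's vanishing one needs $D-E$ to be an \emph{effective $1$-connected} divisor with $(D-E)^2>0$. Effectiveness is not free: one first shows $h^0(E,D|_E)\le 3$ (using that $|D|$ has no fixed component to inject $H^0(E,\omega_E-D|_E)\hookrightarrow H^0(E,\omega_E)$, which has dimension $1$ by Lemma~\ref{tail}), and then $h^0(D)\ge 4$ gives $h^0(D-E)\ge 1$. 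Your plan to get $1$-connectedness via Proposition~\ref{1con} by first proving $D-E$ nef is not promising: components $\theta$ of $E$ can certainly satisfy $\theta(D-E)<0$ (since $\theta E$ may be positive while $\theta D$ is small), and there is no index-theorem contradiction in sight. The paper instead argues $1$-connectedness directly by contradiction: writing $D-E=A+B$ with $AB\le 0$, one uses $1$-connectedness of $D$ (Proposition~\ref{1con}), the $2$-connectedness of $K_S$, and the bound $K_SF<2q-4$ to produce a $1$-connected subcurve with square $0$ and small $K_S$-degree, contradicting Corollary~\ref{f1}. This is where the hypotheses on $F$ and on pencils are genuinely used, and it takes several case distinctions; your sketch underestimates this step.
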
 
 \begin{proof}  By Corollary \ref{tail}, one has $h^0(E, \omega_E)=1$.   
 Using the Riemann-Roch theorem and $p_a(E)=1$ we see that  $h^0(E, D|_E)=h^1(E, D|_E)+2$.  By duality, $h^1(E,D|_E)=h^0(E, \omega_E-D|_E)$. By assumption, there exists a section $s\in H^0(S, D)$ that does not vanish on any component of $E$.  The section $s$ induces an injective map $H^0(E, \omega_E-D|_E)\to H^0(E,\omega_E)$. Hence  $h^1(E, D|_E)=h^0(E, \omega_E-D|_E)\le h^0(E,\omega_E)=1$,  and we get
  $h^0(E,D|_E)\leq 3$, $h^0(D-E)\geq 1$.  Note also that $ED=2$ implies that $EF=-1$.
  
To prove the lemma  it suffices to show that the restriction map $r\colon H^0(K_S+D)\to H^0(E, (K_S+D)|_E)$ is surjective, because by Riemann-Roch and $(K_S+D)E=3$,  $h^0(E, (K_S+D)|_E)\ge 3$. 
 
 The cokernel of $r$ is $H^1(K_S+D-E)$. Since $D^2\geq 6$, we have $(D-E)^2>0$,  hence  by Ramanujam's vanishing to prove the assertion it is enough  to show that  the effective divisor $D-E$ is $1$-connected (\cite{ramanujam}, cf. \cite{bombieri}, p.453).
 \medskip
 
Suppose for contradiction that  $D-E$ is not $1$-connected.  Then there is a decomposition $D-E=A+B$ where $A,B$ are effective non zero divisors such that $AB\leq 0$.  Since $(A+B)E=3$ and, by the $1$-connectedness of  $D$,  $A(B+E)\geq 1$, $B(A+E)\geq 1$  we must have $2AB+3\geq 2$, and so $AB= 0$. 
 From  $(A+B)E=3$ and  the $1$-connectedness of every curve in $|D|$  (Proposition \ref{1con}),  we  have, say, $AE=1$ and $BE=2$.   
 So $A(B+E)=1$.   Since,  by hypothesis $D^2\geq 6$,  Proposition \ref{1con}  tells us that $A^2\le 0$ or $(B+E)^2\le 0$.  Since $D$ is nef and $DE=2$, $2\leq D(B+E)=(B+E)^2+1$. So $A^2\le 0$  and  the nefness of $D$ implies that $A^2=DA-1\ge -1$.
    
 Since $A(B+E)=1$, the 2-connectedness of the divisors in $|K_S|$ implies that $AF>0$ and $(B+E)F>0$ and so, by the hypothesis $DF<2q-4$ we obtain  $AF<2q-5$.  Note also that, since $D$ is 1-connected,  $A$ is also 1-connected by Lemma \ref{always}. 
 
If  $A^2=0$ then $DA=1$ and  $K_SA=1+FA<2q-4$. Since $S$ has no irrational pencils of genus $>1$ this is a contradiction to  Corollary \ref{f1}.  

So  $A^2=-1$. In this case   $(A+E)^2=0$ and $(A+E)F=AF+EF<2q-6$, yielding $K_S(A+E)=(D+F)(A+E)<2+2q-6=2q-4$.  If $(A+E)$ is 1-connected we have again  a contradiction to Corollary \ref{f1}.
 
 So suppose that  $A+E$ is not 1-connected.  Then it decomposes as  $A_1+A_2$ where $A_1A_2 \leq 0$.  By 1-connectedness  of $D$ and $(A+E)B=2$ we must have $A_i(D-A_i)=1$, for $i=1,2$,  and  so we conclude as above  that $A_i$ is 1-connected and $A_i^2\leq 0$,  for $i=1,2$.  Then from $0=(A+E)^2=A_1^2+2A_1A_2+A_2^2$  one obtains   $A_1A_2=A_1^2=A_2^2=0$.  But then, since $K_S(A_1+A_2)=K_S(A+E)<2q-4$, we have  $K_SA_1<2q-4$,   contradicting   Corollary \ref{f1}. 
 
 So $D-E$ is 1-connected and therefore the Lemma is proven.
   \end{proof}
   
  We recall   the following result:
   \begin{prop}[\cite{mrp}, Corollary 2.7]\label{canonical}  Let $S$ be a minimal surface of general type whose  canonical map is not composed with a pencil.   Denote by $|M|$ the moving part and by $Z$ the fixed part of $|K_S|$. If  $Z>0$ and $M^2\geq 5+K_SZ$, then 
$$K_S^2+\chi(S)=h^0(K_S+M)+K_SZ+MZ/2 \geq h^0(2M)+K_SZ+MZ/2+1.$$

Furthermore,  if  $h^0(K_S+M)=h^0(2M)+1$ then  $|K_S+M|$ has base points and  there is an effective divisor $G$ such that  $GZ\geq 1$ and either $G^2=-1$ and $MG=0$ or $G^2=0$ and $MG=1$.
  \end{prop}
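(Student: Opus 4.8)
The plan is to treat the stated equality and the inequality separately, and then to read off the structure in the boundary case from a Reider-type analysis of the adjoint system.

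\textbf{The equality.} First I would observe that the moving part $M$ of $|K_S|$ is automatically \emph{nef}: if an irreducible curve $\theta$ had $M\theta<0$, then $\theta$ would be a component of every member of $|M|$, contradicting that $|M|$ has no fixed part. Since the canonical map is not composed with a pencil, a general $M\in|M|$ is also irreducible, so $M^2>0$ (in fact $M^2\ge 5$ by hypothesis). Then $h^2(K_S+M)=h^0(-M)=0$ and $h^1(K_S+M)=0$ by Kawamata--Viehweg vanishing ($M$ nef and big), so Riemann--Roch gives $h^0(K_S+M)=\chi(S)+\tfrac12(K_S+M)M=\chi(S)+M^2+\tfrac12 MZ$, using $K_SM=M^2+MZ$. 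On the other hand $K_S^2=(M+Z)^2=M^2+MZ+K_SZ$ and $K_SZ=MZ+Z^2$; combining, $K_S^2+\chi(S)=\chi(S)+M^2+MZ+K_SZ=h^0(K_S+M)+K_SZ+\tfrac12 MZ$. (Note $MZ=K_SZ-Z^2$ is even because $Z^2+K_SZ$ is, so the right-hand side is an integer.)

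\textbf{Reduction of the inequality.} Next I would check that $h^2(2M)=h^0(K_S-2M)=0$: if $K_S-2M$ were effective then, $K_S$ being nef, $(K_S-2M)K_S\ge 0$, i.e.\ $K_SZ\ge M^2+MZ$, contradicting $M^2\ge 5+K_SZ$ together with $MZ\ge 2$ (the latter from $2$-connectedness of $K_S$). Hence $h^0(2M)=\chi(2M)+h^1(2M)=\chi(S)+M^2-MZ+h^1(2M)$, and combining with the formula above,
$$h^0(K_S+M)-h^0(2M)=\tfrac32 MZ-h^1(2M).$$
So the inequality $h^0(K_S+M)\ge h^0(2M)+1$ will follow once $h^1(2M)\le\tfrac32 MZ-1$ is established. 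From $0\to\OO_S(2M)\to\OO_S(K_S+M)\to\OO_Z\big((K_S+M)|_Z\big)\to 0$ and $h^1(K_S+M)=0$ one sees that $h^1(2M)$ is the cokernel of the restriction $\rho\colon H^0(K_S+M)\to H^0\big(Z,(K_S+M)|_Z\big)$; a direct computation gives $\chi\big(\OO_Z((K_S+M)|_Z)\big)=\tfrac32 MZ$, so the claim is precisely that $\rho$ has rank $\ge h^1(Z,(K_S+M)|_Z)+1$ — i.e.\ $|K_S+M|$ restricts almost surjectively to the fixed part $Z$.

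\textbf{The boundary case, and the main obstacle.} This last point is the technical heart, and the step I expect to be the main difficulty. Since $M$ is nef with $M^2\ge 5$, I would apply Reider's theorem to $|K_S+M|$: either $|K_S+M|$ is base-point free — and then a general member meets $Z$ in a base-point-free series, forcing $\operatorname{rk}\rho\ge 2$ and hence $h^0(K_S+M)\ge h^0(2M)+2$ — or there exists an effective divisor $G$ with $MG=0,\ G^2=-1$ or $MG=1,\ G^2=0$, which is exactly where the restriction to $Z$ can degenerate. In the latter case $|K_S+M|$ has base points, and $GZ\ge 1$ follows from parity: $p_a(G)=1+\tfrac12(G^2+K_SG)\in\Z$ forces $K_SG=MG+ZG$ to be even, so $ZG\equiv MG\pmod 2$; moreover $K_SG=0$ would make $G$ a union of $(-2)$-curves (index theorem), so $G^2$ even, impossible if $G^2=-1$ — hence $ZG\ge 1$ in the first case, and $ZG$ is odd hence $\ge 1$ in the second. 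The work that remains — and where the precise hypothesis $M^2\ge 5+K_SZ$ (not just $M^2\ge 5$) must be used — is to make this dichotomy quantitative: to show $h^1(2M)\le\tfrac32 MZ-1$ always, to rule out $h^1(2M)\ge\tfrac32 MZ$, and to see that the extremal value $h^1(2M)=\tfrac32 MZ-1$ can only be attained when $|K_S+M|$ has base points and a Reider curve $G$ as above is present.
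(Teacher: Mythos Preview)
This proposition is not proved in the paper at all: it is quoted verbatim from \cite{mrp}, Corollary~2.7, so there is no ``paper's own proof'' to compare against.

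Judged on its own merits, your argument is correct as far as it goes but is genuinely incomplete, and you say so yourself. The equality is fine, and the reduction of the inequality $h^0(K_S+M)\ge h^0(2M)+1$ to the bound $h^1(2M)\le\tfrac32 MZ-1$ (equivalently $\operatorname{rk}\rho\ge 1$, where $\rho$ is restriction to $Z$) is sound. In the base-point-free case your conclusion $\operatorname{rk}\rho\ge 2$ is correct, though the stated reason (``meets $Z$ in a base-point-free series'') needs one more line: if $\operatorname{rk}\rho=1$ and the restricted series is base-point free, the unique section trivialises $\OO_Z((K_S+M)|_Z)$, forcing $\chi(\OO_Z)=\tfrac32 MZ$; but $\chi(\OO_Z)=\tfrac12 MZ-K_SZ$, so $K_SZ=-MZ<0$, contradicting nefness of $K_S$.

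The gap is the non-base-point-free case. There you produce the Reider divisor $G$, but you do \emph{not} establish $\operatorname{rk}\rho\ge 1$; you only say ``the work that remains \dots\ is to show $h^1(2M)\le\tfrac32 MZ-1$ always''. This is exactly the case in which the boundary $h^0(K_S+M)=h^0(2M)+1$ occurs, so without it the inequality itself is unproven, and the ``furthermore'' clause is established only conditionally. Concretely, you must rule out $\operatorname{rk}\rho=0$, i.e.\ the possibility that $Z$ lies entirely in the fixed part of $|K_S+M|$. The mere existence of a Reider curve $G$ through a base point does not do this; one needs to combine the numerics of $G$ with the stronger hypothesis $M^2\ge 5+K_SZ$ (so far you have used only a fragment of it, to kill $h^2(2M)$) to derive a contradiction --- for instance, by analysing the Reider curve through a \emph{general} point of a component $\theta\le Z$ that is fixed in $|K_S+M|$, and using that the fixed part of $|K_S+M|$ is contained in $Z$. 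Until that step is supplied, the proof is a correct outline rather than a proof.
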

Now we can show the following

\begin{cor}\label{fixed} Let $S$ be an irregular minimal surface such that    
$S$ has  no irrational pencils $f\colon S\to B$ with $g(B)\geq 2 $, $q\geq 6$  and the canonical  map of $S$ is not composed with a  pencil.
 Denote by $|M|$ the moving part and by $Z$ the fixed part of $|K_S|$. If  $Z>0$, then 
$$K_S^2+\chi(S) \geq h^0(2M)+3.$$
 Furthermore if equality holds then $Z^2=-2$ and $K_SZ=0$.
      \end{cor}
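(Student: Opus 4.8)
The plan is to apply Proposition~\ref{canonical} to $|K_S|=|M|+Z$, and then to use Lemma~\ref{z=-1} with $D=M$, $F=Z$ to dispose of the one case it leaves open.

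First I would record the basic facts. Since $q\ge6\ge2$ the Albanese image of $S$ is not a curve (a curve of genus $q$ would give an irrational pencil of genus $\ge2$), so $\albdim S=2$; and since $S$ has no irrational pencil of genus $\ge2$ it is not birational to a product of a genus-$2$ curve and a curve of genus $q-2$, so $p_g\ge 2q-3\ge9$ by \cite{appendix}. As $|K_S|$ is not composed with a pencil, a general $M\in|M|$ is irreducible; $M$ is nef because $|M|$ has no fixed part; and $M^2\ge\deg\Si\ge p_g-2\ge7$ since $\fie$ is given by $|M|$. By Proposition~\ref{generation}(i) (applied with $C=M$, $s=p_g$) and the adjunction formula, $p_a(M)=1+M^2+\tfrac12 MZ\ge 2q-3+p_g$; feeding this into Debarre's inequality $K_S^2\ge 2p_g$ and the index theorem applied to $K_S$ and $Z$ one checks $M^2\ge 5+K_SZ$, so that Proposition~\ref{canonical} applies (when this last inequality fails one argues instead by a direct Riemann--Roch estimate on $h^0(2M)$, which is available because $K_SZ$ and $MZ$ are then forced to be large). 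Finally I may assume $K_SZ<2q-4$, since otherwise $K_SZ+\tfrac12 MZ\ge 2q-4\ge8$ and Proposition~\ref{canonical} already yields strict inequality; with this assumption Lemma~\ref{z=-1} applies with $D=M$, $F=Z$.

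Next I would observe that $MZ$ is even and positive. Indeed $K_SZ\equiv Z^2\pmod2$ because $K_SZ+Z^2=2p_a(Z)-2$, hence $MZ=K_SZ-Z^2$ is even; and $MZ=0$ would give $K_SZ=Z^2\le0$ by the index theorem (using $M^2>0$), so $Z$ would be numerically trivial, against $Z>0$. Thus $MZ\ge2$, so that
$$ K_SZ+\tfrac12 MZ\ \ge\ 1, $$
with equality exactly when $MZ=2$ and $K_SZ=0$, i.e. when $Z^2=-2$ and $K_SZ=0$. Now Proposition~\ref{canonical} gives
$$ K_S^2+\chi(S)=h^0(K_S+M)+K_SZ+\tfrac12 MZ\ \ge\ h^0(2M)+K_SZ+\tfrac12 MZ+1 . $$
If $h^0(K_S+M)\ge h^0(2M)+2$ this already gives $K_S^2+\chi(S)\ge h^0(2M)+3$, and in case of equality one necessarily has $K_SZ+\tfrac12 MZ=1$, i.e. $Z^2=-2$ and $K_SZ=0$, as required.

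It then remains to rule out the case $h^0(K_S+M)=h^0(2M)+1$. Here the last part of Proposition~\ref{canonical} provides an effective divisor $G$ with $GZ\ge1$ and either $G^2=-1$, $MG=0$ or $G^2=0$, $MG=1$. The hard part will be to produce, from $G$ and a suitable subcurve of $Z$, an effective divisor $E\le Z$ with $E^2=-1$, $K_SE=1$ and $ME=2$: once this is done one has $K_S+M-E\ge2M$, and Lemma~\ref{z=-1} applied with $D=M$ gives $h^0(K_S+M)\ge h^0(K_S+M-E)+3\ge h^0(2M)+3$, contradicting $h^0(K_S+M)=h^0(2M)+1$. Constructing $E$ calls for a careful analysis of the irreducible components of $G$ and of the part of $Z$ that meets it---using the $1$-connectedness statements of \S 2, Lemma~\ref{tail}, and Corollary~\ref{f1} (to exclude an irrational pencil of genus $\ge q-1$)---together with the disposal of the few degenerate configurations in which no such $E$ exists. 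This is the step I expect to be the main obstacle.
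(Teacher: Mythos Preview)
Your overall plan matches the paper's: apply Proposition~\ref{canonical}, then in the borderline case $p:=h^0(K_S+M)-h^0(2M)=1$ use the auxiliary divisor $G$ together with Lemma~\ref{z=-1} to push $p$ up. But the final step contains a real gap.

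You want an effective $E\le Z$ with $E^2=-1$, $K_SE=1$, $ME=2$, so that $K_S+M-E\ge 2M$ and Lemma~\ref{z=-1} gives $p\ge 3$. This works when $MZ=2$, $K_SZ=1$, $Z^2=-1$ (take $E=Z$), but it \emph{cannot} work when $K_SZ=0$: then every subcurve $E\le Z$ has $K_SE=0$ (since $K_S$ is nef and $K_SZ=0$), so no $E\le Z$ with $K_SE=1$ exists. And $K_SZ=0$ covers precisely the cases that remain after your easy reductions --- the prospective equality case $MZ=2$, $Z^2=-2$, and the case $MZ=4$, $K_SZ=0$ (where $p=1$ would give $m=3$ without $Z^2=-2$). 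Your phrase ``from $G$ and a suitable subcurve of $Z$'' points toward the right construction but is inconsistent with the constraint $E\le Z$: when $K_SZ=0$ one has $K_SG=MG+GZ=1$, so $G$ is not a sum of $(-2)$-curves and cannot lie in $Z$.

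What the paper does is take $E=G+Z'$ with $Z'\le Z$ a suitable $(-2)$-cycle ($Z'=Z$ when $MZ=2$; $Z'=Z_1$, one half of a splitting $Z=Z_1+Z_2$, when $MZ=4$), check $E^2=-1$, $K_SE=1$, $ME=2$ numerically, and then note that $K_S+M-E=2M-G$ (resp.\ $2M+Z_2-G$). Since $MG=0$ and $G$ is $1$-connected, one gets only the weaker bound $h^0(K_S+M-E)\ge h^0(2M)-1$; combined with Lemma~\ref{z=-1} this yields $p\ge 2$, not $p\ge 3$ --- but that is exactly enough to contradict $p=1$. The paper also organises the argument by the value of $MZ$ rather than by the value of $p$; this has the side benefit that Proposition~\ref{canonical} is only ever invoked with $K_SZ\le 1$, so its hypothesis $M^2\ge 5+K_SZ$ is immediate from $M^2\ge p_g-2\ge 7$. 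Your verification of that hypothesis in general is hand-waved and constitutes a second, smaller gap.
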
  
      
      \begin{proof}
    The hypothesis that $S$ has  no irrational pencils $f\colon S\to B$ with $g(B)\geq 2 $ implies that $p_g\geq 2q-3$. Since $q\geq 6$, we have then $p_g\geq 9$ and  so the hypothesis that the canonical  map of $S$ is not composed with a  pencil implies  that $M^2\geq 6$. 
  
       Since $K_S^2+\chi(S)=h^0(K_S+M)+K_SZ+MZ/2 $ to prove the corollary we need to study the number $m:=p+K_SZ+MZ/2$, where $p:=h^0(K_S+M)-h^0(2M)$. 
  Note that $MZ$ is an even  positive number by the 2-connectedness of the canonical divisors. Also, by Corollary \ref{alwaysc}, 
for any decomposition $K_S=A+B$ with $A,B>0$ and $AB=2$ 
  both $A$ and $B$ are 1-connected.
  
  \medskip
  We start by analyzing the case  $MZ=2$. 
   If $MZ=2$, then $Z$ is 1-connected  by Corollary \ref{alwaysc} and because $K_S$ is nef $Z^2\geq -2$.  On the other hand, by the index theorem (Corollary  \ref{Hodge1})   and $M^2\geq 6$, we have $Z^2\leq 0$.  Furthermore the hypothesis that  $S$ has  no irrational pencils $f\colon S\to B$ with $g(B)\geq 2 $  implies that $Z^2=0$ does not occur, because if $Z^2=0$ then $K_SZ=2$ and this is impossible by Corollary \ref{f1}. 
  
  So we are left with the possibilities:

  \begin{enumerate}
\item $Z^2=-1, K_SZ=1$;
\item $Z^2=-2,  K_SZ=0$.
\end{enumerate}

 Note that $K_S+M-Z=2M$.

 In the first case   Lemma \ref{z=-1} gives  $p\geq 3$ yielding $m\geq 5$.
  
  In the second case suppose that $m<3$.  By Proposition  \ref{canonical},  we see that    $p=1$, and  that   there is an effective divisor $G$ such that    $GZ\geq 1$ and either $G^2=-1$ and $MG=0$ or $G^2=0$ and $MG=1$.  It is easy to check that $MZ=2$ implies $GZ=1$.  As above  $G^2=0$ can be excluded using the hypothesis that  $S$ has  no irrational pencils $f\colon S\to B$ with $g(B)\geq 2 $.  If $G^2=-1$   we can apply  Lemma \ref{z=-1} to the divisor $E=G+Z$ and obtain that  $h^0(K_S+M)\geq  h^0(K_S+M-E)+3 $.   Since $K_S+M-E=2M-G$,  $MG=0$ and $G$ 1-connected  imply $h^0(2M-G)\geq h^0(2M)-1$ we obtain $p\geq 2$ contradicting $p=1$.
  
  So we proved the Corollary for the case $MZ=2$.
  \medskip
  
 Supose now  that $m=3$ and $MZ\geq 4$. Then  either $MZ=6$, $K_SZ=0$ and $p=0$ and this is excluded by  Corollary \ref{canonical}  or $MZ=4$ and $p+K_SZ\leq 1$. Since $M^2\geq 6$,  the case  $p=0$ and $K_SZ=1$ is again excluded by Corollary  \ref{canonical}  and so we are left with the case $MZ=4$, $K_SZ=0$ and  (by Corollary \ref{canonical}) $p=1$.  It is not difficult to verify  that $MZ=4$, $K_SZ=0$ imply that  $Z$ decomposes as $Z=Z_1+Z_2$ such that $K_SZ_i=0$, $Z_i^2=-2$ and $Z_iM=2$.   
 
  Since $p=1$,  by Corollary   \ref{canonical}  there is an effective divisor $G$ such that either $G^2=-1$ and $MG=0$ or $G^2=0$ and $MG=1$.   Again the second possibility for $G$ can be excluded as before.  In fact because $G(M-G)=1$ the 2-connectedness of the canonical divisors implies that $GZ>0$ and $(M-G)Z>0$. Since $MZ=4$, one must have $GZ\leq 3$.  If $G^2=0$ then $K_SG\leq 4$ and this is impossible by     Corollary \ref{f1}. 
  If $G^2=-1$ and  $GZ=3$ then $(G+Z)^2=1$ and $K_S(G+Z)=3$. Since $(M-G)^2>0$ and  $M^2\geq 6$ and $MZ=4$ imply $K_S^2\geq 10$, we have a contradiction to  Proposition \ref{1con}.
  So $G^2=-1$ and $GZ=1$. If $GZ_i>1$ then $(G+Z_i)^2>0$ and we find the same contradiction as above. So, say, $GZ_1=1$ and $GZ_2=0$. Then the divisor $E:=G+Z_1$ satisfies $E^2=-1$ and $K_SE=1$ and  as before applying  Lemma \ref{z=-1} we obtain $p\geq 2$, a contradiction. 
  So if $MZ=4$, $m\geq 4$.
   \end{proof}

    \section{Castelnuovo type inequalities }\label{sec:castelnuovo}

The Castelnuovo inequality (cf. \cite[Th\'eor\`eme 3.2]{deb1})  states  that if $S$ is a minimal surface of general type such that $\fie$ is birational then $K^2_S\ge 3p_g+q-7$. In the case $q>0$, in  \cite[Theorem 2.1]{barjaosaka}  the inequality has been improved to $K^2_S\ge 3p_g+q-6$ under the assumption  that $p_g\ge 6$.

By applying  the results of \S \ref{sec:irreg_surf}  we are able  to improve further the inequality in the case of surfaces with $q\ge 6$ 
 (Theorem  \ref{thm:castelnuovo}) and to sharpen it further under the assumption that $S$ has no irrational pencil and  $|K_S|$ has a  fixed part (Theorem \ref{thm:fixed}). As in the previous section, $S$ denotes a smooth  complex projective surface with geometric genus $p_g$ and irregularity $q$ and the canonical map of $S$ is denoted by $\fie\colon S\to\pp^{p_g-1}$.

\begin{thm} \label{thm:castelnuovo}
Assume that $S$ is minimal  and $\fie$ is birational. Then:  
\begin{enumerate}
\item if $q>0$ and $p_g\ge 6$, then $K^2_S\ge 3p_g+q-6$;
\item if $q\ge 6$ and $p_g\ge 12$, then $K^2_S\ge 3p_g+q-5$;
\item if $q\ge 7$ and $p_g\ge 14$, then $K^2_S\ge 3p_g+q-4$.
\end{enumerate}
\end{thm}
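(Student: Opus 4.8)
The plan is to combine the inequality $h^0(2M)\ge 4p_g-6+\alpha$ from Proposition \ref{prop:quadric} with the standard identity relating $K_S^2+\chi(S)$ to $h^0(K_S+M)$ and $h^0(2M)$, treating separately the case where $|K_S|$ has a fixed part and the case where it does not. Write $|K_S|=|M|+Z$. If $Z=0$, then $K_S=M$ and we argue directly: $K_S^2+\chi(S)=h^0(2K_S)$ by Riemann--Roch and Kawamata--Viehweg vanishing, so $K_S^2=h^0(2K_S)-\chi(S)=h^0(2M)-p_g-1+q\ge (4p_g-6+\alpha)-p_g-1+q=3p_g+q-7+\alpha$, where $\alpha=0$ in case (i), $\alpha=1$ in case (ii), $\alpha=2$ in case (iii) by Proposition \ref{prop:quadric} (whose hypotheses $q\ge 5,6,7$ and $p_g\ge 8,12,14$ are met). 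This already gives the desired bounds when $Z=0$.

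The main work is the case $Z>0$, and here the key input is Corollary \ref{fixed}. First one must check that $\fie$ birational and the numerical hypotheses force $\fie$ not to be composed with a pencil and $M^2\ge 6$ (indeed $\fie$ birational onto a surface means it is not composed with a pencil, and $p_g$ large forces $M^2$ large; one should spell out $M^2\ge 2p_g-4\ge 6$ since the canonical image of a surface of general type with birational $\fie$ is non-degenerate of degree $\ge 2(p_g-2)$, whence $M^2\ge \deg\Sigma\ge 2p_g-4$). Then Corollary \ref{fixed} applies provided $S$ has no irrational pencil of genus $\ge 2$; but if $S$ \emph{does} have such a pencil one needs a separate, easier argument — here I would invoke the slope-type bounds for fibered surfaces, or more simply observe that in case (ii),(iii) the hypotheses $q\ge 6,7$ together with a fibration $f\colon S\to B$, $g(B)\ge 2$, give $\chi(S)\ge (g(B)-1)(g(F)-1)$ and a direct estimate; actually the cleanest route is to note that part (i) is already known (\cite{barjaosaka}) so only (ii) and (iii) are new, and for those one may reduce to the no-pencil case. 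Assuming no such pencil, Corollary \ref{fixed} gives $K_S^2+\chi(S)\ge h^0(2M)+3\ge 4p_g-6+\alpha+3=4p_g-3+\alpha$, hence $K_S^2\ge 4p_g-3+\alpha-\chi(S)=4p_g-3+\alpha-p_g-1+q=3p_g+q-4+\alpha$, which is \emph{better} than required, so the fixed-part case is not the binding one.

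Putting the two cases together, in each of (i), (ii), (iii) the worst case is $Z=0$, giving exactly $K_S^2\ge 3p_g+q-7+\alpha$, i.e. $\ge 3p_g+q-6$ in (i) (this is the reproof of \cite{barjaosaka}'s result via Proposition \ref{prop:quadric} with $\alpha=0$), $\ge 3p_g+q-5$ in (ii), and $\ge 3p_g+q-4$ in (iii). I expect the genuine obstacle to be not the arithmetic but the bookkeeping around the hypothesis ``no irrational pencil of genus $\ge 2$'': Corollary \ref{fixed} needs it, while Proposition \ref{prop:quadric} does not, so one must either handle the fibered case by hand (using that a fibration forces the canonical image to be a scroll or a curve, contradicting birationality of $\fie$ unless the base and fiber genera are controlled, and then a direct slope inequality $K_S^2\ge \text{(slope)}\cdot\chi$ gives more than enough) or remark that the fibered subcase of (ii),(iii) follows a fortiori from stronger known bounds for such surfaces. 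A careful statement should make explicit which of these routes is taken; I would take the first, showing that if $\fie$ is birational and $S\to B$ is a fibration with $g(B)\ge 2$ then $p_g\le$ something incompatible with the hypotheses, or else the Albanese-dimension-$2$ hypothesis of Proposition \ref{prop:quadric} already rules it out — in any event the surface-with-birational-canonical-map hypothesis makes $\albdim S=2$ automatic, so Proposition \ref{prop:quadric} always applies and the $Z=0$ computation above is unconditional.
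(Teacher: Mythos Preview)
Your core idea --- combine $K_S^2+\chi(S)=h^0(2K_S)\ge h^0(2M)$ with the bounds of Proposition~\ref{prop:quadric} --- is exactly the paper's approach, but your execution has a genuine gap and an unnecessary detour.

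\textbf{The gap.} Your final claim that ``the surface-with-birational-canonical-map hypothesis makes $\albdim S=2$ automatic'' is false. A surface fibred over a curve $B$ of genus $b\ge 1$ with $q(S)=b$ has $\albdim S=1$, and its canonical map can perfectly well be birational. Since Proposition~\ref{prop:quadric} explicitly requires $\albdim S=2$, you cannot apply it without first disposing of this case. The paper does so in one line by citing \cite[Theorem~6.1]{konnoslope}: if $\albdim S=1$ then $K_S^2\ge 3p_g+7q-7$, which for $q\ge 1$ is already stronger than any of (i)--(iii).

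\textbf{The detour.} Splitting into $Z=0$ versus $Z>0$ and invoking Corollary~\ref{fixed} is unnecessary. Since $2M\le 2K_S$, one always has $h^0(2K_S)\ge h^0(2M)$, regardless of $Z$. So the paper simply writes $K_S^2=h^0(2K_S)-\chi(S)\ge h^0(2M)-p_g+q-1$ once and for all, and reads off (ii) and (iii) from Proposition~\ref{prop:quadric}. This sidesteps entirely the ``no irrational pencil of genus $\ge 2$'' hypothesis of Corollary~\ref{fixed}, which you correctly flag as a nuisance but then handle only sketchily. (Corollary~\ref{fixed} is used in the paper for Theorem~\ref{thm:fixed}, not here.)

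\textbf{Two smaller points.} Your $\alpha$-indexing is off by one: Proposition~\ref{prop:quadric} gives $h^0(2M)\ge 4p_g-5,\,4p_g-4,\,4p_g-3$, so $\alpha=1,2,3$; with your stated $\alpha=0,1,2$ your own formula $3p_g+q-7+\alpha$ yields $3p_g+q-6$ in case~(ii), not the required $3p_g+q-5$. And case~(i) cannot be reproved this way: its hypotheses $q>0$, $p_g\ge 6$ do not meet those of any part of Proposition~\ref{prop:quadric}, and the baseline $h^0(2M)\ge 4p_g-6$ gives only $K_S^2\ge 3p_g+q-7$. The paper simply cites~(i) from \cite{barjaosaka}.
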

\begin{proof}  
Statement  (i) is \cite[Theorem 2.1]{barjaosaka}.

If  $\albdim S=1$, then  $K^2_S\ge 3p_g+7q-7$ by \cite[Theorem 6.1]{konnoslope}. Hence we may assume $\albdim S=2$.  

Since $S$ is minimal, by Riemann--Roch we have
$K^2_S+\chi(S)=h^0(2K_S)\ge h^0(2M)$, namely $K^2_S\ge h^0(2M)-p_g+q-1$. Hence  (ii) and (iii) follow directly by Proposition \ref{prop:quadric}.
  \end{proof}
 
  \begin{thm}\label{thm:fixed} Assume that $S$ is minimal with no 
   irrational pencils of genus $\ge 2$    and  that $\fie$ is birational. If the canonical system $|K_S|$ has a fixed part $Z>0$, then: 
  \begin{enumerate}
 \item   if $q\ge 6$, then $K^2_S\ge 3p_g+q-3$;
\item if $q\ge 6$ and $p_g\ge 12$, then $K^2_S\ge 3p_g+q-2$;
\item if $q\ge 7$ and $p_g\ge 14$, then $K^2_S\ge 3p_g+q-1$.
\end{enumerate}
Furthermore, if equality holds in (i), (ii) or (ii), then  $Z^2=-2$ and $K_SZ=0$.
  \end{thm}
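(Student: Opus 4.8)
The plan is to reduce Theorem~\ref{thm:fixed} to the already established Corollary~\ref{fixed}, exactly as Theorem~\ref{thm:castelnuovo}(ii),(iii) follows from Proposition~\ref{prop:quadric}. First I would dispose of the case $\albdim S=1$: by \cite[Theorem 6.1]{konnoslope} one has $K^2_S\ge 3p_g+7q-7$, which is stronger than all three asserted bounds once $q\ge 6$, so from now on I may assume $\albdim S=2$. Since $\fie$ is birational, $|K_S|$ is in particular not composed with a pencil, and since $S$ has no irrational pencil of genus $\ge 2$ we have $p_g\ge 2q-3\ge 9$; hence $M^2\ge 6$ and all the hypotheses of Corollary~\ref{fixed} are met. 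By Riemann--Roch on the minimal surface $S$, $K^2_S+\chi(S)=h^0(2K_S)\ge h^0(2M)$, and combining this with Corollary~\ref{fixed} gives
\[
K^2_S\ge h^0(2M)+3-\chi(S)=h^0(2M)+3-(p_g-q+1)=h^0(2M)-p_g+q+2,
\]
with the extra information that when equality holds throughout one has $Z^2=-2$ and $K_SZ=0$ (the last clause of Corollary~\ref{fixed}).

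Next I would feed in the three lower bounds on $h^0(2M)$. For (i), $q\ge 6$ forces $p_g\ge 9$, so the general bound $h^0(2M)\ge 4p_g-6$ (cf.\ \cite{deb1},\cite{Reid_quadrics},\cite{barjaosaka}) gives $K^2_S\ge 4p_g-6-p_g+q+2=3p_g+q-4$; this is one better than needed, but to obtain the stated bound $3p_g+q-3$ together with the equality characterization I would instead argue that whenever $K^2_S=3p_g+q-3$ we must have had equality in $K^2_S+\chi(S)\ge h^0(2M)$, $h^0(2M)\ge 4p_g-6$ (so $\dimQuad S=3$ in the sense of \cite{barjaosaka}) and in Corollary~\ref{fixed}, forcing $Z^2=-2$, $K_SZ=0$. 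Actually it is cleaner to note that for $q\ge 6$ the bound $3p_g+q-4$ already proves (i) strictly unless one is in the equality case, and the equality case $K^2_S=3p_g+q-3$ occurs only with $h^0(2M)=4p_g-6$ and equality in Corollary~\ref{fixed}. For (ii), Proposition~\ref{prop:quadric}(ii) gives, under $p_g\ge 12$, $q\ge 6$, the improved bound $h^0(2M)\ge 4p_g-4$, whence $K^2_S\ge 4p_g-4-p_g+q+2=3p_g+q-2$; equality forces equality in $K^2_S+\chi\ge h^0(2M)$, in Proposition~\ref{prop:quadric}(ii), and in Corollary~\ref{fixed}, giving $Z^2=-2$, $K_SZ=0$. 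For (iii), Proposition~\ref{prop:quadric}(iii) gives $h^0(2M)\ge 4p_g-3$ under $p_g\ge 14$, $q\ge 7$, hence $K^2_S\ge 3p_g+q-1$, with the analogous equality analysis.

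The one genuinely delicate point is the equality characterization in part (i): there the general inequality $h^0(2M)\ge 4p_g-6$ gives $K^2_S\ge 3p_g+q-4$, so the asserted bound $3p_g+q-3$ needs a further step. I would handle this by observing that if $K^2_S\le 3p_g+q-4$, i.e.\ $K^2_S\in\{3p_g+q-4, 3p_g+q-3,\dots\}$ — wait, rather: I need to rule out $K^2_S=3p_g+q-4$ outright, or show it only happens with $Z^2=-2$, $K_SZ=0$. The clean route is: by Corollary~\ref{fixed}, if $Z^2\ne -2$ or $K_SZ\ne 0$ then the inequality $K^2_S+\chi(S)\ge h^0(2M)+4$ holds (re-examining the proof of Corollary~\ref{fixed}, the excluded cases all yield $m\ge 4$ except the sharp one), so $K^2_S\ge h^0(2M)-p_g+q+3\ge 4p_g-6-p_g+q+3=3p_g+q-3$, and then the equality case of (i) forces exactly $Z^2=-2$, $K_SZ=0$; while if $Z^2=-2$, $K_SZ=0$ then $m\ge 3$ directly gives $K^2_S\ge 3p_g+q-3$. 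Thus the whole theorem, including the equality statement, reduces to the bookkeeping already done in Corollary~\ref{fixed} together with the $h^0(2M)$ bounds; the main obstacle is simply making sure the ``$m\ge 4$ unless sharp'' refinement is genuinely available from the proof of Corollary~\ref{fixed} — which, on inspection of that proof, it is, since every branch there concludes $m\ge 4$ or $m\ge 5$ except the branch landing on $Z^2=-2$, $K_SZ=0$ with $m=3$.
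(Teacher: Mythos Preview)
Your argument for parts (ii) and (iii) is exactly the paper's: combine Corollary~\ref{fixed} with Proposition~\ref{prop:quadric}(ii),(iii). The handling of $\albdim S=1$ is unnecessary (the hypothesis ``no irrational pencil of genus $\ge 2$'' together with $q\ge 6$ already forces $\albdim S=2$), but harmless.

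The genuine gap is in part (i). Using only the general bound $h^0(2M)\ge 4p_g-6$ you obtain $K^2_S\ge 3p_g+q-4$, which is one \emph{short} of the claim, not ``one better''. Your patch via case analysis then fails precisely in the branch $Z^2=-2$, $K_SZ=0$: there Corollary~\ref{fixed} gives only $K^2_S+\chi(S)\ge h^0(2M)+3$ (this is the sharp case $m=3$), and plugging in $h^0(2M)\ge 4p_g-6$ still yields $K^2_S\ge 3p_g+q-4$. Nothing in the proof of Corollary~\ref{fixed} upgrades $m$ to $4$ in this branch, so the dichotomy you propose does not close the gap.

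The fix is simply to use Proposition~\ref{prop:quadric}(i), which you overlooked: for $q\ge 5$ and $p_g\ge 8$ it gives $h^0(2M)\ge 4p_g-5$, and since $q\ge 6$ with no irrational pencil of genus $\ge 2$ forces $p_g\ge 2q-3\ge 9$, this applies. Then Corollary~\ref{fixed} yields $K^2_S\ge (4p_g-5)+3-\chi(S)=3p_g+q-3$, and equality forces equality in Corollary~\ref{fixed}, hence $Z^2=-2$, $K_SZ=0$. This is exactly the paper's proof.
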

  \begin{proof} 
   By Corollary \ref{fixed} we have $K^2_S+\chi(S)\ge h^0(2M)+3$, with equality holding only if $Z^2=-2$ and $K_SZ=0$. The result now follows immediately by Proposition \ref{prop:quadric} (notice that for $q\ge 6$ one has $p_g\ge 9$ by the Castelnuovo- De Franchis inequality).
  \end{proof}

              \section{ Surfaces with  $p_g=2q-3$}\label{sec:2q-3}
   
   Throughout all the section we  consider a minimal surface $S$  with irregularity $q$ and geometric genus $p_g(S)=2q-3$.  
   We denote by $\Si$ the canonical image and by $\fie\colon S\to\Si\subset \pp^{2q-4}$ the canonical map.
   
  The purpose of the section is to prove  the following:
  \begin{thm}\label{thm:degreecan}  If  $S$ is minimal with $q\ge 6$, $p_g=2q-3$ and  has no irregular pencil of genus $\ge 2$,  then the canonical map $\fie$ is birational.
\end{thm}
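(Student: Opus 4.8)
The plan is to study the canonical image $\Si\subset\pp^{2q-4}$ and show that the canonical map $\fie$ has degree $1$ onto $\Si$, which (together with the known birationality onto a surface when $\fie$ is not composed with a pencil) gives the statement. First I would dispose of the degenerate cases. Since $S$ has no irrational pencil of genus $\ge 2$, the Castelnuovo--De Franchis inequality gives $\albdim S=2$, and $q\ge 6$ forces $p_g=2q-3\ge 9$, so $\fie$ cannot be composed with a pencil (a surface with maximal Albanese dimension and $p_g\ge 3$ whose canonical map is composed with a pencil would produce an irrational pencil, and the genus bound would contradict our hypothesis via Proposition \ref{generation}). Hence $\Si$ is a surface. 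By Corollary \ref{ruled}, if $p_g(\Si)=0$ then $\deg\fie\ge 3$ and $q(\Si)\le 1$; the heart of the proof is to rule this out. If instead $p_g(\Si)>0$ one argues directly that $\fie$ is birational (e.g.\ $\fie$ factors through the canonical map of $\Si$, which for $p_g(\Si)\ge 1$ and $\albdim=2$ forces degree $1$; alternatively one invokes that $\fie$ generically finite of degree $d$ onto $\Si$ with $p_g(\Si)>0$ together with $p_g(S)=2q-3$ is too small).

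So assume for contradiction $\deg\fie\ge 2$; then $\Si$ is a rational-like surface with $p_g(\Si)=0$, $q(\Si)\le 1$. Let $|M|$ be the moving part of $|K_S|$ and $\eta\colon \Upsilon\to\Si$ the minimal desingularization, with $|H|:=\eta^*|\OO_{\pp^{2q-4}}(1)|$, so $n:=2q-4\ge 8$ and $\Upsilon$ is a rational (or birationally ruled over an elliptic curve) surface with $|H|$ complete of degree $m=\deg\Si$. The key input is the Castelnuovo-type degree bound: since $\fie$ has degree $\ge 2$, $m=\deg\Si\le \tfrac12 K_S^2$ roughly, and combined with the slope-type inequalities available for surfaces with $p_g=2q-3$ (the bound $K^2\ge 7\chi-1$ from \cite{mr}, or the sharper estimates in \S\ref{sec:castelnuovo}) one gets $m\le \tfrac32 n$ — i.e.\ $\Si$ falls exactly into the range covered by Theorem \ref{thm:degree}. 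Apply Theorem \ref{thm:degree}: for $q\ge 7$ (so $n\ge 10$) we get a pencil $|L|$ on $\Upsilon$ every member of which spans at most a $\pp^r$ with $r<\tfrac12 n=q-2$; pushing forward, this yields on $S$ either a base-point-free pencil $|C|$ or a fibration $f\colon S\to B$ whose general fibre $C$ has $\fie(C)$ spanning a $\pp^{\le q-3}$. But Proposition \ref{generation} says such a $C$ must span at least a $\pp^{q-2}$ (if it moves in a linear system of dimension $\ge 1$) or a $\pp^{q-b-2}$ with $b=g(B)\ge 1$ — and since there are no pencils of genus $\ge 2$ we have $b\le 1$, forcing the span to be at least $\pp^{q-3}$; a careful comparison of the inequalities gives the contradiction, handling separately the rational-pencil and elliptic-pencil subcases. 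This settles $q\ge 7$.

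The case $q=6$, $n=8$ is the main obstacle, because Theorem \ref{thm:degree}(ii) only produces a pencil spanning at most a $\pp^3$ (degree $\le 4$), and for $q=6$ Proposition \ref{generation} demands a span of at least $\pp^{q-2}=\pp^4$ for a moving curve and $\pp^{q-3}=\pp^3$ for a fibre over a curve of genus $1$ — so the scroll argument is just on the boundary and must be refined. Here I would use Proposition \ref{deg11} and its companion analysis of the degree-$10,11,12$ exceptions: either the pencil $|L|$ from Theorem \ref{thm:degree}(ii) genuinely spans only $\pp^3$, in which case $|L|$ cannot be a moving pencil (Proposition \ref{generation}(i) is violated) so it must come from a genuine fibration $f\colon S\to B$ with $g(B)=1$ and general fibre $C$, $\fie(C)$ spanning exactly $\pp^3$ — then I would analyse this elliptic fibration directly, using $p_g=2q-3=9$, the structure of $|K_S|$ relative to the fibration, and the constraints on $K_S^2$ and $\chi$ to reach a numerical contradiction; or one is in the case $\deg\Si=11$ and $H=2H'+J$ with $h^0(H')\ge 3$, $|H'|$ without fixed components, which pulls back to a decomposition $K_S\sim 2D'+Z'+(\text{stuff})$ with $|D'|$ a sub-linear-system of dimension $\ge 2$; one then derives a contradiction by bounding $\deg\Si=2\deg\fie(S)\cdot(\text{something})$ against $K_S^2$, or by showing $|D'|$ would itself force the canonical map of $S$ to behave in a way incompatible with $p_g=2q-3$ and the absence of genus-$\ge 2$ pencils. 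In all cases the contradiction forces $\deg\fie=1$, hence $\fie$ is birational.
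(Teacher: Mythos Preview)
Your overall architecture for $q\ge 7$ is close to the paper's, but there are two genuine gaps.

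First, the bound $m=\deg\Si\le \frac{3}{2}n$ does not come from lower bounds on $K_S^2$ such as $K^2\ge 7\chi-1$ or the Castelnuovo-type inequalities of \S\ref{sec:castelnuovo}. A lower bound on $K_S^2$ cannot bound $m$ from above. What is needed is an \emph{upper} bound: from $d\ge 3$ (Corollary~\ref{ruled}) and $K_S^2\ge dm$ one gets $m\le K_S^2/3$, and then the Miyaoka--Yau inequality $K_S^2\le 9\chi=9(q-2)$ yields $m\le 3(q-2)=\frac{3}{2}(2q-4)$. Citing \S\ref{sec:castelnuovo} here would in any case be circular, since those inequalities assume $\fie$ birational. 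Also, the case $q(\Si)=1$ is not handled by Theorem~\ref{thm:degree} (which is for rational $\Si$); the paper disposes of it separately via Reid's lower bound on $m$ for an irrationally ruled surface and then compares with the Miyaoka--Yau bound (for $q=6$, with the sharper bound below).

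Second, and more seriously, your treatment of $q=6$ is missing the key input. Theorem~\ref{thm:degree}(ii) leaves open the cases $m=11,12$ in $\pp^8$, and Proposition~\ref{generation} does not by itself exclude a pencil of degree $4$. The paper's decisive new ingredient is Lemma~\ref{lem:K2q6}: using the Causin--Pirola Hermitian-matrix estimate on $h^{1,1}$, one gets $K_S^2\le 35-r$ where $r$ is the number of curves contracted by the Albanese map. This immediately forces $d=3$ and $m=11$ (ruling out $m=12$ and $q(\Si)=1$), and then (Lemma~\ref{norational}) forces $|K_S|$ to have no fixed part and $S$ to contain no rational curves, so $K_S$ is ample. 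Only with these facts and the tight range $33\le K_S^2\le 35$ can the decomposition $K_S=2D+\Gamma$ from Proposition~\ref{deg11} be pushed to a contradiction, via a delicate parity and index-theorem analysis of $K_SD$, $K_S\Gamma$, $\Gamma^2$. Your sketch (``analyse this elliptic fibration directly\ldots numerical contradiction'', ``bounding $\deg\Si$ against $K_S^2$'') does not supply any upper bound on $K_S^2$ and so cannot close the argument; without Lemma~\ref{lem:K2q6} there is no contradiction to reach.
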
 

As a consequence, we are able to strengthen the inequalities  of  \cite[Theorem 1.2]{mr} as follows:   
\begin{thm} If $S$ is minimal with  $p_g=2q-3$, then:
\begin{enumerate}
\item if $q\ge 6$, then $K^2_S\ge 7\chi(S)+2$;
\item if $q\ge 8$, then $K^2_S\ge 7\chi(S)+3$;
\item if $q\ge 9$, then $K^2_S\ge 7\chi(S)+4$.
\end{enumerate}
Furthermore if equality holds then the fixed part $Z$ of $|K_S|$ is a $(-2)$-cycle of type $D_n, E_6,E_7$ or $E_8$.
\end{thm}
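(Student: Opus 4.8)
The plan is to combine the birationality statement of Theorem~\ref{thm:degreecan} with the sharpened Castelnuovo inequalities of \S\ref{sec:castelnuovo}, exploiting the specific numerology $p_g = 2q-3$. First I would treat the case where $S$ actually has an irrational pencil of genus $\ge 2$: here the inequality $K^2_S \ge 7\chi(S)+2$ (indeed more) should follow from the classification results quoted in the introduction (\cite{mr},\cite{bnp}), so the substance is in the case with no such pencil. Assuming then that $S$ has no irrational pencil of genus $\ge 2$, Theorem~\ref{thm:degreecan} tells us that for $q \ge 6$ the canonical map $\fie$ is birational, which is precisely the hypothesis needed to feed into Theorems~\ref{thm:castelnuovo} and~\ref{thm:fixed}. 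Note that $p_g = 2q-3$ forces $p_g = 2q-3 \ge 9$ when $q\ge 6$, $p_g \ge 13 \ge 12$ when $q\ge 8$ (wait: $q\ge 8$ gives $p_g\ge 13$), and $p_g = 2q-3 \ge 15 \ge 14$ when $q\ge 9$; so the hypotheses $p_g\ge 12$ and $p_g\ge 14$ in parts (ii) and (iii) of those theorems are automatically met for the corresponding ranges of $q$, and similarly $q\ge 7$ holds once $q\ge 9$. This is the arithmetic bookkeeping that matches the three cases of the present theorem to the three cases of Theorems~\ref{thm:castelnuovo}/\ref{thm:fixed}.

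Next I would split according to whether $|K_S|$ has a fixed part. If $|K_S|$ has no fixed part, apply Theorem~\ref{thm:castelnuovo}: using $\chi(S) = p_g - q + 1$ and $p_g = 2q-3$, we get $\chi = q-2$, hence $3p_g + q - 6 = 6q - 9 + q - 6 = 7q - 15 = 7\chi + 14 - 15 = 7\chi - 1$; that only gives $K^2_S \ge 7\chi - 1$, which is the old bound. So the no-fixed-part case must be handled by a better argument — this is where I expect the genuine difficulty to lie. One should instead use the stronger lower bounds on $h^0(2M)$ from Proposition~\ref{prop:quadric}: for $q \ge 6$, $p_g\ge 8$ gives $h^0(2M) \ge 4p_g - 5$, and since $M = K_S$ (no fixed part) and $S$ minimal, $K^2_S + \chi = h^0(2K_S) \ge h^0(2M) \ge 4p_g - 5$, giving $K^2_S \ge 4p_g - 5 - \chi = 4(2q-3) - 5 - (q-2) = 8q - 12 - 5 - q + 2 = 7q - 15 = 7\chi - 1$ — still only $7\chi-1$. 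Hmm: the gain of $+2$ over \cite{mr}'s $7\chi - 1$ must come from pushing $h^0(2M)$ up by $3$ more, i.e. from the refined quadric-hull estimates combined with the hypotheses $q\ge 6$, $p_g\ge 12$ in part (ii) of Proposition~\ref{prop:quadric}, which give $h^0(2M)\ge 4p_g - 4$; or, more likely, the extra $+3$ comes precisely from Corollary~\ref{fixed}/Theorem~\ref{thm:fixed} in the fixed-part case and from a separate sharper analysis when $|K_S|$ is free. I would therefore look carefully at whether, when $\fie$ is birational with no fixed part, one can improve $h^0(2M) \ge 4p_g - 5$ to $4p_g - 2$ using non-degeneracy of the canonical image together with $q\ge 6$; this should be the analogue of Corollary~\ref{fixed} in the free case.

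In the case where $|K_S|$ has a fixed part $Z > 0$, Theorem~\ref{thm:fixed}(i) directly gives $K^2_S \ge 3p_g + q - 3 = 6q - 9 + q - 3 = 7q - 12 = 7\chi + 2$, which is exactly (i); parts (ii) and (iii) of Theorem~\ref{thm:fixed} give $K^2_S \ge 3p_g + q - 2 = 7\chi + 3$ under $q\ge 6,\ p_g\ge 12$ (hence $q\ge 8$ by $p_g = 2q-3 \ge 12 \Leftrightarrow q \ge 8$, noting $q$ integer and $q\ge 7.5$) and $K^2_S \ge 3p_g + q - 1 = 7\chi + 4$ under $q\ge 7,\ p_g\ge 14$ (hence $q\ge 9$), matching (ii) and (iii). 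Moreover the ``furthermore'' clause of Theorem~\ref{thm:fixed} says that equality forces $Z^2 = -2$ and $K_S Z = 0$, i.e. $Z$ is a $(-2)$-curve orthogonal to $K_S$; to conclude that in the equality case $Z$ is a $(-2)$-cycle of type $D_n, E_6, E_7$ or $E_8$ I would need the extra input that $Z$ cannot contain a component of type $A_n$ — this should follow because if $Z$ had an $A_n$-tail then one could split off a piece and use Corollary~\ref{f1} (applied to a $1$-connected sub-divisor of square $0$) to contradict the absence of high-genus irrational pencils, or use the structure of the base locus of $|K_S+M|$ from Proposition~\ref{canonical}. \textbf{The main obstacle} I foresee is precisely reconciling the no-fixed-part sub-case with the claimed $+2$: one must squeeze three extra sections of $2K_S$ out of birationality plus $q \ge 6$, presumably via a refinement of the Hilbert-function argument in Proposition~\ref{prop:quadric} adapted to the free canonical system, or else show directly that $|K_S|$ free together with $p_g = 2q-3$ and $\fie$ birational cannot happen in an extremal way. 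The remaining steps — the arithmetic translation between $(p_g,q)$ and $(\chi, K^2)$, and the ADE classification of the extremal $Z$ — are routine once the sharp bound on $h^0(2M)$ is in hand.
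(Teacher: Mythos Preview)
Your reduction to the case ``no irrational pencil of genus $\ge 2$'' and your arithmetic matching of Theorem~\ref{thm:fixed}(i)--(iii) to the three claimed bounds are correct, and this is exactly how the paper proceeds in the fixed-part case. The genuine gap is your treatment of the \emph{no-fixed-part} case, which you flag as the main obstacle. In fact it is not an obstacle at all: the paper disposes of it in one line by quoting \cite{mr} and \cite{bnp}. The point, stated in the introduction, is that for $q\ge 4$ a surface with $p_g=2q-3$ and no irrational pencil of genus $>1$ is \emph{generalized Lagrangian}, and by \cite{bnp} a minimal generalized Lagrangian surface with no fixed part in $|K_S|$ satisfies $K^2_S\ge 8\chi(S)$. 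Since $\chi=q-2\ge 4$, this gives $K^2_S\ge 7\chi+4$ outright, which is stronger than all three claimed inequalities. So the paper's proof simply says ``we may assume $Z>0$'' and moves on; your attempt to push Proposition~\ref{prop:quadric} further in the free case is neither needed nor likely to succeed.

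The same missing ingredient explains the $A_n$ exclusion in the ``furthermore'' clause. Once $K_SZ=0$ and $Z^2=-2$, the observation that $M\theta\ge 0$ forces $\theta Z\le 0$ for every component $\theta$, so $Z$ is a $(-2)$-cycle of some ADE type; but ruling out $A_n$ does not come from Corollary~\ref{f1} or Proposition~\ref{canonical} as you guess. The paper invokes \cite[Theorem 5.4]{bnp}, which gives $K^2_S\ge 8\chi(S)$ when $Z$ is of type $A_n$, contradicting equality in $K^2_S=7\chi+c$ for $c\le 4$. In short, both of your anticipated difficulties dissolve once you use the \cite{bnp} bound $K^2\ge 8\chi$ for the free and $A_n$ cases.
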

 \begin{proof}
 As explained in the proof of  \cite[Thm.1.2]{mr}, we may assume that $S$ has no irrational pencil of genus $>1$ and that $|K_S|=|M|+Z$, with the fixed part $Z>0$. 

Since  in this case the canonical map $\fie$ is birational by Theorem \ref{thm:degreecan}, we get the inequalities  by applying Theorem  \ref{thm:fixed}. Again by Theorem \ref{thm:fixed}, one has equality only if $K_SZ=0$ and $Z^2=-2$.  

Note that  for every component $\theta $ of $Z$,  $M\theta \geq 0$ because $|M|$ is the moving part of $|K_S|$.   Since $K_S\theta=0$ we see that every component $\theta $ of $Z$ satisfies $\theta Z\leq 0$.
 So $Z$  is a $(-2)$-cycle and as such  it can be of  of type $A_n$, $D_n$, $E_6$, $E_7$ or $E_8$ (see,  e.g.,  \cite[Ch.III,\S 3]{bpv}). However if $Z$ is of type $A_n$ then by \cite[Theorem 5.4]{bnp}  one has $K^2_S\ge 8\chi(S)$, a contradiction.
\end{proof}

\subsection{Proof of Theorem \ref{thm:degreecan}}

The proof is quite involved and requires a detailed analysis of the case  $q=6$, hence we break it into several  auxiliary lemmas. 
The first one is of independent interest.
\medskip
\begin{lem}\label{lem:K2q6} Let $S$ be a minimal surface of general type with $q=6$ and $p_g=2q-3=9$. If $S$ has no irrational pencil of genus $>1$, then $$K^2_S\le 35-r, $$ 
where $r$ is the number of irreducible curves contracted by the Albanese map of $S$.
\end{lem}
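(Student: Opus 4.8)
The plan is to estimate $K^2_S$ from above by combining the Castelnuovo–De Franchis bound $p_g\geq 2q-3$ (which here is an equality) with the analysis of the canonical system developed in \S\ref{sec:irreg_surf}, paying attention to the curves contracted by the Albanese map. Since $q=6$, $p_g=9$ and $S$ has no irrational pencil of genus $>1$, the canonical map $\fie$ is not composed with a pencil; write $|K_S|=|M|+Z$ as usual. First I would dispose of the case $\albdim S=1$ using the slope inequality $K^2_S\geq 3p_g+7q-7$ of \cite{konnoslope} (so $\albdim S=2$ from now on), and then split according to whether $\fie$ is birational. If $\fie$ is birational, Theorem \ref{thm:castelnuovo}(i) and Theorem \ref{thm:fixed}(i) already give $K^2_S\geq 3p_g+q-6=21$ type lower bounds, but for the \emph{upper} bound I would instead run the Riemann–Roch/Clifford estimate on $h^0(2M)$ together with Proposition \ref{prop:quadric}: from $K^2_S+\chi(S)=h^0(2K_S)$ and a bound on $h^0(2M)-p_g+q-1$ one controls $K^2_S$. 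The key quantitative input is that $\deg\Si=M^2$ (or $M^2+MZ/2$ when there is a fixed part) is bounded in terms of $p_g$ via Castelnuovo's bound on the genus of space curves, which caps $M^2$ from above; this is where the number $35$ will come from numerically.

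The mechanism producing the correction term $-r$ is the following. Each irreducible curve $C$ contracted by the Albanese map $a\colon S\to A$ is, by Lemma \ref{lem:D2} applied to $A'=A/\langle a(C)\rangle$, a curve with $C^2\leq 0$ and $g(C)<q=6$; moreover since $S$ has no irrational pencil of genus $\geq 2$, the case $C^2=0$ would, via Lemma \ref{lem:D2}(ii) and Corollary \ref{f1}, force an irrational pencil of genus $\geq q-p_a(C)\geq 2$ (as $g(C)\le 5$), a contradiction — hence $C^2<0$, i.e. $C^2\leq -1$, and in fact by adjunction $K_SC\geq 1$, so each such $C$ contributes at least $1$ to $K_S\cdot(\text{contracted locus})$. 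Summing over the $r$ contracted curves and using that they are negative-definite (they span a negative-definite sublattice of $\NS(S)$, being contracted by a morphism to an abelian variety), one gets $r$ independent classes $e_1,\dots,e_r$ with $e_i^2\leq -1$, $K_Se_i\geq 1$; a Hodge-index/orthogonalization argument against $K_S$ (or against $M$) then shows that the presence of these $r$ classes forces $K^2_S$ to \emph{drop} by at least $r$ relative to the bound one would get without contracted curves. Concretely, I expect to write $K_S^2 = M^2 + 2MZ + Z^2 + \dots$ and bound $M^2$ by Castelnuovo, while the contracted curves eat into $Z^2$ (or into the defect of $h^0(2M)$), yielding $K^2_S\leq 35-r$.

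The main obstacle will be making the bookkeeping of the $-r$ term rigorous: one must show that \emph{distinct} contracted irreducible curves contribute \emph{independently}, i.e. that the full contracted divisor $\Theta=\sum C_i$ satisfies $K_S\Theta\geq r$ and $\Theta^2\leq -r$ (or some such), and that this quantity genuinely subtracts from the available budget in the inequality $K^2_S+\chi(S)=h^0(2K_S)$. The delicate point is that contracted curves may form $(-2)$-cycles or more complicated negative-definite configurations, and $K_S C$ can be $0$ on a $(-2)$-curve — but such a curve is \emph{not} contracted by the Albanese map unless its image is a point, and a $(-2)$-curve $C$ with $K_SC=0$, $C^2=-2$ has $g(C)=0<q$ and $C^2=-2<0$, so it is consistent; these contribute to $\Theta^2$ but not to $K_S\Theta$, so I would need to argue separately that if $\Theta$ contains $(-2)$-curves the bound still holds, perhaps by noting such curves lie in $Z$ and invoking Corollary \ref{fixed} or the $\chi(S)\ge 1$ inequality. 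I would structure the final step as: bound $h^0(2M)\leq 4p_g-6+(\text{correction})$ from Proposition \ref{prop:quadric} read in the contrapositive, bound $\chi(S)=p_g-q+1=4$, and bound the contribution of the fixed part and contracted curves to get $K^2_S=h^0(2K_S)-\chi(S)\leq 35-r$; the case analysis (birational vs.\ non-birational $\fie$, $Z=0$ vs.\ $Z>0$) will need to be carried out uniformly, with the non-birational case being easier since then $\deg\fie\geq 3$ forces $M^2$ small.
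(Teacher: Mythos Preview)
Your approach has a fundamental gap: every tool you invoke from \S\ref{sec:irreg_surf} and \S\ref{sec:castelnuovo} produces \emph{lower} bounds on $K^2_S$, not upper bounds. Proposition \ref{prop:quadric} bounds $h^0(2M)$ from \emph{below}, and plugging this into $K^2_S=h^0(2K_S)-\chi(S)\ge h^0(2M)-\chi(S)$ again gives $K^2_S\ge\cdots$. Reading it ``in the contrapositive'' does not reverse the inequality. Likewise, Castelnuovo's genus bound for space curves bounds the genus of a general $M\in|M|$ from \emph{above} in terms of $M^2$ and $p_g$, which translates (via adjunction) into a \emph{lower} bound on $M^2$, not an upper one. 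The only standard upper bound in your toolkit is Miyaoka--Yau, which gives $K^2_S\le 9\chi(S)=36$ here --- one too many, and with no mechanism for the $-r$ term. Your proposed bookkeeping for the contracted curves (``each contributes at least $1$ to $K_S\cdot(\text{contracted locus})$'') goes in the wrong direction: even if $K_S\Theta\ge r$ and $\Theta^2\le -r$, this does not force $K^2_S$ to \emph{drop}; indeed $(-2)$-curves, which are contracted by $a$, have $K_SC=0$, so the claim $K_SC\ge 1$ fails outright.

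The paper's proof is of a completely different nature: it is Hodge-theoretic. One uses Noether's formula in the form $K^2_S+h^{1,1}(S)=52$ and then bounds $h^{1,1}(S)$ from \emph{below} by $17+r$. The $r$ comes from the classes of the contracted curves, which span an $r$-dimensional negative-definite subspace $T_1\subset H^{1,1}(S)$; the $17$ comes from a lower bound on $\dim a^*H^{1,1}(A)$, obtained via a result of Causin--Pirola \cite{causin} on ranks of Hermitian matrices (the kernel of $a^*$ on $H^{1,1}(A)\cong\mathbb{H}_6$ consists of Hermitian forms of rank $\le 2$ after restriction, and the relevant bound $d_{6,2}\le 19$ gives $\dim T_2\ge 36-19=17$; wait, the paper writes $30-d_{6,2}$, but the upshot is $\dim T_2\ge 17$). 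Since $T_1\perp T_2$, one gets $h^{1,1}(S)\ge 17+r$, hence $K^2_S\le 35-r$. None of the linear-system analysis of \S\ref{sec:irreg_surf} enters.
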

\begin{proof}
By Noether's formula and Hodge duality, we have $K^2_S+h^{1,1}(S)=52$. We give a lower bound for $h^{1,1}(S)$ by  using methods and results from  \cite{causin}. Let $\Ga_1,\dots \Ga_r$ be the irreducible curves contracted by  the Albanese map $a\colon S \to A$. Since the image of  $a$ is a surface, the intersection matrix  $(\Ga_i\Ga_j)_{i,j=1,\dots r}$  is negative  definite, hence the classes of the $\Ga_i$ span an $r$-dimensional  subspace $T_1\subset H^{1,1}(S)$. Since $T_1$ is orthogonal to $T_2:=a^*(H^{1,1}(A))\subset H^{1,1}(S)$,  we have  $h^{1,1}(S)\ge r+\dim T_2$. 

Denote by ${\mathbb H}_q$ the real vector space of $q\times q$ Hermitian matrices and define $d_{q,n}$ as the maximum  dimension of a subspace $V\subset {\mathbb H}_q$ such that every $0\ne M\in V$ has rank $\ge 2n$. By \cite[Proposition  2.2.3]{causin}, one has $\dim T_2\ge 30-d_{6,2}$. 

We give a rough lower bound for $d_{6,2}$ as follows. Identify ${\mathbb H}_5$ with the subspace of ${\mathbb H}_6$ consisting of the matrices whose last row and column are zero. Then if $V\subset {\mathbb H}_6$ is a subspace such that every $0\ne M\in V$ has rank at least 4, then   $\dim V\cap {\mathbb H}_5\le d_{5,4}$. We have $d_{5,4}\le 8$ by \cite[Proposition 2.2.2]{causin}, hence using Grassmann formula we get
$$\dim V \le \dim {\mathbb H}_6-\dim {\mathbb H}_5+d_{5,4}\le  36-25+8=19,$$ which gives $d_{6,2}\le 19$. Thus we get $\dim T_2\ge 17$,  $h^{1,1}(S)\ge 17+r$ and $K^2_S\le 35-r$.

\end{proof}
\medskip 
The next Lemma contains the proof of Theorem \ref{thm:degreecan} for $q\geq 7$.
\medskip

\begin{lem}\label{lem:3rational}
 Assume that $S$ is minimal  with $q\ge 6$, $p_g=2q-3$ that  $\fie$ is not birational and that $S$  has no irrational pencil of genus $\ge 2$.  Then $q=6$, $\deg\fie=3$ 
  and the canonical image $\Si\subset \pp^8$ is a rational surface of degree $11$.
\end{lem}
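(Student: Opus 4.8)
The plan is to run the ``termination of adjunction'' analysis on the canonical image $\Si$, exploiting the constraint $p_g=2q-3$ which forces $\dim\pp^{p_g-1}=2q-4$ to be large relative to $q$. First I would dispose of the case $\albdim S=1$: a fibration over a curve of genus $\ge 2$ is excluded by hypothesis, so $\albdim S=1$ would force a fibration over an elliptic curve, and then a standard slope argument (or a direct count using Proposition \ref{generation}(ii) applied to the general fibre) is incompatible with $p_g=2q-3$ and $q\ge 6$; so we may assume $\albdim S=2$. Then $\fie$ is generically finite, and since $\fie$ is assumed not birational, Corollary \ref{ruled} gives $\deg\fie\ge 3$ and $q(\Si)\le 1$, $p_g(\Si)=0$ (the latter because $p_g(\Si)>0$ together with $\deg\fie\ge 2$ would make $p_g(S)$ too large — here one uses that a surface of general type dominating $\Si$ more than two-to-one cannot have $p_g(S)=2q(S)-3$ unless $\Si$ is irrational ruled, which is excluded).

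Next I would estimate $\deg\Si = K_S^2/\deg\fie$. On the one hand, writing $|K_S|=|M|+Z$, one has $K_S^2\ge K_SM\ge M^2 = \deg\fie\cdot\deg\Si$, so $\deg\Si \le K_S^2/\deg\fie$. On the other hand, by the Castelnuovo bound for non-degenerate surfaces in $\pp^{2q-4}$, $\deg\Si\ge 3(2q-4)-1 = 6q-13$ (using $\deg\Si\ge 3(p_g-1)-1$, cf. the proof of Proposition \ref{prop:quadric}). The key numerical input is an upper bound for $K_S^2$: by \cite{mr} (or the weaker bounds recalled in the introduction) one has $K_S^2\le 8\chi-$something; more precisely, combining $\chi = p_g-q+1 = q-2$ with the known inequality $K^2_S< 9\chi$ for surfaces of general type and with the refined bounds available for $p_g=2q-3$ surfaces (e.g.\ $K_S^2\le 8\chi$ fails only in controlled cases), one gets $K_S^2 \le 8(q-2) = 8q-16$. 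Hence $\deg\Si\le (8q-16)/\deg\fie \le (8q-16)/3$, while $\deg\Si\ge 6q-13$; the inequality $6q-13\le (8q-16)/3$ forces $18q-39\le 8q-16$, i.e.\ $10q\le 23$, which is absurd. So something in this crude estimate must be improved: in fact the point is that $K_S^2\ge \deg\fie\cdot\deg\Si + K_SZ$ combined with $\deg\fie\ge 3$ and $\deg\Si\ge 6q-13$ already gives $K_S^2\ge 18q-39$, which for $q\ge 7$ exceeds $8q-16$, a contradiction; hence $q=6$. This is the heart of the argument and where I expect the bookkeeping to be delicate — one has to be careful about whether $|K_S|$ has a fixed part and whether $M^2 = \deg\fie\cdot\deg\Si$ exactly or only with an error term, and one needs the sharp value $\chi = q-2$.

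Having reduced to $q=6$, so $p_g=9$, $\chi=4$, $\Si\subset\pp^8$, I would pin down $\deg\fie$ and $\deg\Si$. With $\deg\fie\ge 3$ and $\deg\Si\ge 6\cdot 6-13 = 23$ wait --- here one must use the correct Castelnuovo bound $\deg\Si\ge 3p_g-7 = 20$ for surfaces with birational-on-a-cover canonical map, or more carefully the bound for the (possibly non-birational) canonical map; in any case $M^2 = \deg\fie\cdot\deg\Si$ and $K_S^2 = M^2 + MZ + K_SZ \ge M^2$. Using $K_S^2\le 35-r\le 35$ from Lemma \ref{lem:K2q6} and $K_S^2\ge 3\deg\Si \ge 3\cdot 11$ if $\deg\fie=3$, one sees $\deg\Si$ cannot be too large; chasing the inequalities $33\le K_S^2\le 35$ when $\deg\fie=3$, and checking $\deg\fie\ge 4$ gives $K_S^2\ge 4\deg\Si\ge 4\cdot(\text{min degree})$ which overshoots $35$, one is forced into $\deg\fie = 3$ and then $\deg\Si \in\{11,\dots\}$ with $K_S^2 = 3\deg\Si + (\text{nonneg})\le 35$ forcing $\deg\Si = 11$ (and $K_S^2\in\{33,34,35\}$, $MZ+K_SZ\le 2$). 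Finally, since $\deg\fie=3$ and $p_g(\Si)=0$, the surface $\Si$ is dominated by $S$ and has $q(\Si)\le 1$; one rules out $q(\Si)=1$ because a surface of degree $11$ in $\pp^8$ with $q=1$, $p_g=0$ would be (birationally) ruled over an elliptic curve, whose ruling pulls back to an irrational pencil of genus $1$ --- allowed --- so instead one argues that if $q(\Si)=1$ the general hyperplane section of $\Si$ has genus too large for degree $11$ in $\pp^7$, or one invokes the classification directly; this shows $q(\Si)=0$, and a rational surface with $p_g=q=0$ is exactly a rational surface in the sense of the classification, so $\Si$ is rational. This completes the proof; the main obstacle throughout is making the degree estimates sharp enough to isolate $q=6$, $\deg\fie=3$, $\deg\Si=11$ rather than merely bounding these quantities.
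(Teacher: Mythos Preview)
Your proposal contains a genuine gap that breaks the argument. The claimed lower bound $\deg\Si \ge 3(2q-4)-1 = 6q-13$ (or $3p_g-7$) is \emph{not} valid here. That inequality is the Castelnuovo bound for the canonical image \emph{when $\fie$ is birational}; it comes from $K_S^2\ge 3p_g+q-7$ and is quoted in Proposition \ref{prop:quadric} precisely under that hypothesis. Once you have established $p_g(\Si)=0$ (so $\Si$ is a ruled or rational surface, not a canonical one), the only general lower bound is the trivial $\deg\Si\ge p_g-2=2q-5$, and your numerical contradiction $18q-39\le K_S^2\le 9(q-2)$ evaporates: $3(2q-5)\le 9(q-2)$ holds for all $q$. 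You seem to notice something is off (``wait --- here one must use the correct Castelnuovo bound''), but the sketch never recovers from this.

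The paper's proof is not a pure numerical squeeze. After obtaining $\deg\Si\le 3(q-2)=\frac{3}{2}(2q-4)$ via Miyaoka--Yau and $d\ge 3$, it invokes Theorem \ref{thm:degree}: a rational surface of degree $\le\frac{3}{2}n$ in $\pp^n$ (with $n=2q-4\ge 9$) carries a pencil of curves each spanning a linear space of dimension $<q-2$. Pulling such a pencil back to $S$ contradicts Proposition \ref{generation}, which says any moving curve on $S$ must have canonical span of dimension $\ge q-2$. This geometric obstruction is what eliminates $q\ge 7$ in the rational case; the irrational-ruled case $q(\Si)=1$ is handled separately via Reid's bound \cite{miles}. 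For $q=6$ the exceptional cases $m=11,12$ of Theorem \ref{thm:degree}(ii) survive, and then Lemma \ref{lem:K2q6} ($K_S^2\le 35$) pins down $d=3$, $m=11$. Your outline never brings in Theorem \ref{thm:degree}, which is the essential technical input.
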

\begin{proof}
 Since by \cite{irregularite} a surface of general type  $S$ whose canonical system is composed with a pencil has  $q\leq 2$,  the canonical image  $\Sigma$ is a surface. 
Hence, denoting by  $d$ be the degree of $\fie$ and by  $m$ the degree of $\Sigma$, we have  $K_S^2\geq  d m$. 
 
 Assume that $d>1$.
  By \cite[Th\'eor\`eme 3.1]{beauville} either $p_g(\Sigma)=0$ or $p_g(\Sigma)=p_g(S)$ and $\Sigma$ is a canonical surface.  In the second case $m\geq 3p_g-7$ and so  $K_S^2\geq 6 p_g-14= 12q-36=9\chi(S)+3q-14$. Since $q\geq 6$,  this is a contradiction to the Miyaoka-Yau inequality.
 
 So $p_g(\Sigma)=0$ and, by Corollary  \ref{ruled}, $d\geq 3$ and $q(\Sigma)\leq 1$.  Since $d\geq 3$, we have  $ K_S^2\geq 3 m$  and therefore, since $K_S^2\leq 9(q-2)$ by the Miyaoka-Yau inequality, we get
 \begin{equation}\label{degSigma}
 m\le 3(q-2)=\frac{3}{2}(2q-4).
 \end{equation}
  Thus $\Sigma$  is a ruled surface by \cite[Lemme 1.4]{beauville}.  
 
Assume that $q(\Sigma)=1$.   By Proposition   \ref{generation}, $\Sigma$ has no pencil of rational curves of degree $<q-3$ and so   by \cite[(1.2)]{miles},   $m\geq (2(q-3)/(q-2))(2q-4)=4(q-3)$.   Since  $4(q-3)\leq 3(q-2)$ iff $q\leq 6$,  for $q=7$ we have obtained a contradiction. For $q=6$, the same argument gives $m=12$, hence $K^2_S\ge 3m=36$, contradicting  Lemma \ref{lem:K2q6}.

Hence  $q(\Sigma)=0$ and $\Sigma$ is rational.   By \eqref{degSigma}, the surface $\Sigma$ satisfies the assumptions of Theorem \ref{thm:degree}. Hence, if  $q\ge 7$ the surface $\Si$ has a pencil $|L|$ of curves such  that the span of every $L\in|L|$ has dimension $<q-2$. Since this contradicts  Proposition \ref{generation}, statement (ii) is proven.

By the same arguments, if $q=6$ and $d>1$ then $m=11$ or $m=12$. 
By Lemma \ref{lem:K2q6}, we get $35\ge K^2_S\ge dm\ge 3m$. Hence the only possibility is $d=3$ and $m=11$.
\end{proof}

\begin{lem}\label{norational} If $S$ has  no irrational pencil of genus $\ge 2$, $q=6$, $p_g=9$ and $\fie$ is not birational, then  $|K_S|$ has no fixed component and $S$ contains no rational curves.  In particular $K_S$ is ample.
\end{lem}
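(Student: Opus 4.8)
The hypotheses place us squarely in the situation analyzed in Lemma \ref{lem:3rational}: under these assumptions $\fie$ has degree $3$ and the canonical image $\Si\subset\pp^8$ is a rational surface of degree $11$. The strategy is to argue by contradiction, assuming that $|K_S|=|M|+Z$ with $Z>0$ (respectively that $S$ carries a rational curve), and to exploit the numerical control coming from Lemma \ref{lem:K2q6} together with the description of $\Si$ given by Proposition \ref{deg11}.

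\textbf{Step 1: ruling out a fixed part.} Suppose $Z>0$. Since $S$ has no irrational pencil of genus $\ge 2$, $q=6$ and $p_g=9$, and since $\fie$ is not composed with a pencil (its image is a surface), Corollary \ref{fixed} applies and gives $K^2_S+\chi(S)\ge h^0(2M)+3$, with equality forcing $Z^2=-2$, $K_SZ=0$. Now I would bound $h^0(2M)$ from below using the geometry of the degree $11$ rational surface $\Si$: by Proposition \ref{deg11}, $H$ (the hyperplane class on $\Upsilon$) decomposes as $H=2H'+J$ with $h^0(H')\ge 3$, and one can pull this back to a nontrivial decomposition of $M$ (or of a suitable effective divisor in $|M|$) that yields a lower bound on $h^0(2M)$ of the form $h^0(2M)\ge 4p_g-c$ for a small constant $c$; combined with $\chi(S)=p_g-q+1=4$ this produces a lower bound for $K^2_S$ that clashes with $K^2_S\le 35-r\le 35$ from Lemma \ref{lem:K2q6}. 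I expect the precise inequality to be $K^2_S\ge 36$ or larger, giving the contradiction; so $|K_S|$ has no fixed component and $K_S=M$ is base-point free off the base locus, in particular nef, and $K^2_S=\deg\Si\cdot\deg\fie=33$ up to the contribution of base points, which must also be controlled.

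\textbf{Step 2: ruling out rational curves and concluding ampleness.} Since $S$ is minimal of general type, $K_S$ is nef, so any irreducible curve $C$ with $K_SC=0$ is a $(-2)$-curve; such a curve is contracted by $\fie$ and also by the Albanese map (as $a^*(\text{ample})\cdot C = K_SC$ would have to vanish by the index theorem applied with $K_S^2>0$, forcing $C^2<0$ and $C$ contracted). Thus every rational curve on $S$ is among the $r$ curves contracted by the Albanese map, and each such curve is a smooth rational $(-2)$-curve. If $r>0$, then $K^2_S\le 35-r\le 34$; on the other hand, having no fixed part from Step 1, $K^2_S=\deg\fie\cdot\deg\Si=33$ exactly when $|K_S|$ is base-point free, and the presence of base points only raises $K^2_S$, so one gets $K^2_S\ge 33$; the delicate point is to show $K^2_S=33$ is incompatible with $r\ge 1$, or more precisely to combine the contracted $(-2)$-curves with the birational-to-degree-$3$ structure to force $r=0$. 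The cleanest route is probably: a $(-2)$-curve $C$ is collapsed by $\fie$, so it lies in a fiber of the degree $3$ map $S\to\Si$; analyzing how $3$ points (the fiber of $\fie$) can degenerate along $C$, together with $\fie$ factoring through a surface of degree exactly $11$, pins down $K^2_S$ and contradicts $K^2_S\le 35-r$ once $r\ge 1$. Hence $r=0$, $S$ has no $(-2)$-curves and no other rational curves (a rational curve with $K_SC>0$ would have $C^2=K_SC-2\ge -1$; if $C^2\ge 0$ it is not contracted, and one checks via Lemma \ref{lem:D2} and Corollary \ref{f1} that $g(C)=0<q$ forces $C^2\le 0$, while $C^2=-1$ is impossible on a minimal surface without an exceptional contraction — here one uses that such a $(-1)$-class would have to pair to $1$ with $K_S$ and be a genuine $(-1)$-curve). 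With no curve $C$ satisfying $K_SC=0$, the nef divisor $K_S$ is ample by the Nakai–Moishezon criterion.

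\textbf{Main obstacle.} The technical heart is Step 1: extracting from Proposition \ref{deg11} a strong enough lower bound on $h^0(2M)$ (equivalently on $K^2_S$) to beat the ceiling $K^2_S\le 35$. The decomposition $H=2H'+J$ only directly constrains $\Si$, and transporting it to a statement about $h^0(2M)$ on $S$ requires care because $\fie$ has degree $3$, so $2M$ is not simply the pullback of $\OO_\Si(2)$; one must argue via the pullback of the pencil $|H'|$ (or of $|-K_{\tilde T}|$ in the notation of Theorem \ref{thm:degree}, Step 5) to produce an effective decomposition of $M$ with a large $h^0$, and then invoke the standard lower bound $h^0(D_1+D_2)\ge h^0(D_1)+h^0(D_2)-1$ for connected divisors. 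Getting the constant sharp enough — so that the resulting $K^2_S$ exceeds $35$ rather than merely reaching it — is where the real work lies; one may need to combine this with the fact (from Step 2's preliminary analysis) that any $(-2)$-curves are already accounted for in the $r$ of Lemma \ref{lem:K2q6}.
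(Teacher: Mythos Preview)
Your approach has a genuine gap in both steps, and misses the two simple ideas that make the paper's proof work in a few lines.

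\textbf{Step 1 is the wrong route.} You try to force $K^2_S\ge 36$ via Corollary~\ref{fixed}, i.e.\ $K^2_S\ge h^0(2M)-1$, and then hope to push $h^0(2M)$ up to $37$ using Proposition~\ref{deg11}. But Proposition~\ref{deg11} is a statement about the hyperplane class on the \emph{rational} surface $\Si$, and it gives no handle on $h^0(2M)$ on $S$; the only general lower bounds available (the standard $h^0(2M)\ge 4p_g-6=30$, or the refinements in Proposition~\ref{prop:quadric} which require $\fie$ birational) fall far short of $37$. The paper's argument sidesteps $h^0(2M)$ entirely: from $\deg\fie\cdot\deg\Si=33$ one has $M^2\ge 33$, so
\[
35-r\;\ge\;K^2_S\;=\;K_SM+K_SZ\;\ge\;M^2+MZ\;\ge\;33+MZ.
\]
If $Z>0$ then $MZ\ge 2$ by $2$-connectedness of canonical divisors, forcing $r\le 0$; moreover the chain of inequalities collapses to $K_SZ=0$, $Z^2=-2$, so every component of $Z$ is a $(-2)$-curve, hence rational, hence contracted by the Albanese map --- giving $r>0$, a contradiction. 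This is the entire content of Step~1.

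\textbf{Step 2 misses the key structural fact.} Once $Z=0$, you still need to exclude rational curves $C$ with $K_SC>0$ (your adjunction computation ``$C^2=K_SC-2\ge-1$'' is incorrect; for smooth rational $C$ one has $C^2=-2-K_SC\le-3$ when $K_SC>0$, so such curves can certainly exist on minimal surfaces). The inequality $K^2_S\le 35-r$ with $K^2_S\ge 33$ only gives $r\le 2$, so you cannot conclude $r=0$ numerically. The missing ingredient is specific to $p_g=2q-3$: as explained in \cite[\S3]{mr}, under these hypotheses every global $2$-form on $S$ is of the shape $\al\wedge\be$ with $\al,\be\in H^0(\Omega^1_S)$. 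Consequently the base locus of $|K_S|$ is exactly the locus where the Albanese differential drops rank, and in particular every rational curve (on which all $1$-forms, hence all such $2$-forms, vanish) lies in $Z$. Since $Z=0$, there are no rational curves, and $K_S$ is ample. Without invoking this ``wedge'' description of $H^0(K_S)$, there is no reason for an arbitrary rational curve to sit in the fixed part, and your argument cannot close.
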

\begin{proof} As usual, write $|K_S|=|M|+Z$, where $|M|$ is the moving part and $Z$ is the fixed part. 
Since every global $2$-form $\si$ of $S$ can be written $\si=\al\wedge\be$ for some $\al,\be\in H^0(\Omega^1_S)$ (cf. \cite[\S 3]{mr}), the components of $Z$  are the curves on which the differential of the Albanese $a$ map drops rank.

 Let $r$ be the number of irreducible curves of $S$ contracted by $a$.
By Lemma \ref{lem:3rational} and Lemma \ref{lem:K2q6}, we have:
\begin{equation}\label{eq:K2}
35-r\ge K^2_S=K_SM+K_SZ\ge M^2+MZ\ge MZ+33.
\end{equation}
By the $2$-connectedness of canonical divisors, if $Z>0$ then   $MZ=2$, $K_SZ=0$, $Z^2=-2$. Hence every component of  $Z$ is a  smooth rational curve with self-intersection $-2$ and $r>0$, contradicting \eqref{eq:K2}. Thus  $Z=0$.  Furthermore since any rational curve   of $S$ would be contained in $Z$,  $S$ has no rational curves and so $K_S$ is ample.
\end{proof}
 \medskip

Finally we are in a position to show that also in the case $q=6$ the canonical map is birational.
\medskip
\begin{lem} Let $S$ be a minimal surface of general type with $q=6$ and $p_g=2q-3=9$. If $S$ has no irregular pencil of genus $>1$, then the canonical map of $S$ is birational.
\end{lem}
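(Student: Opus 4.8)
The plan is to suppose that $\fie$ is not birational and to derive a contradiction from the information gathered in the previous lemmas. By Lemma \ref{lem:3rational} we then have $\deg\fie=3$ and $\Si\subset\pp^8$ is a rational surface of degree $11$; by Lemma \ref{norational} the system $|K_S|$ has no fixed part, $K_S$ is ample and $S$ carries no rational curve, so the Albanese map of $S$ contracts no curve (hence $\albdim S=2$, since an irrational pencil of genus $\ge2$ is excluded), and consequently $K^2_S\le 35$ by Lemma \ref{lem:K2q6}. Since $|K_S|$ is the complete system pulled back from $\OO_{\pp^8}(1)$, on the minimal desingularization $\eta\colon\Upsilon\to\Si$ the system $|H|=\eta^*|\OO_{\pp^8}(1)|$ is complete of dimension $8$, so $\Si$ satisfies the hypotheses of Theorem \ref{thm:degree}(ii) and of Proposition \ref{deg11}. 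I would then split the argument according to whether or not $\Si$ possesses a pencil of curves of degree $<4$.

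Suppose first that $\Si$ has a pencil of curves of degree $\le 3$. Composing $\fie$ with the rational map $\Si\dashrightarrow\pp^1$ defined by this pencil and taking a Stein factorization gives a fibration $g\colon S\to B$ onto a smooth curve $B$ whose general fibre $C$ is smooth irreducible and is mapped by $\fie$ onto a general member $L$ of the pencil; thus $\fie(C)=L$ has degree $\le 3$ and spans at most a $\pp^3$. Since $S$ has no irrational pencil of genus $\ge2$ we get $g(B)\le 1$. If $g(B)=0$, then $h^0(S,C)\ge2$ and Proposition \ref{generation}(i) forces $\fie(C)$ to span at least a $\pp^{q-2}=\pp^4$, a contradiction. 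If $g(B)=1$, then by Proposition \ref{generation}(ii) $\fie(C)$ spans at least a $\pp^{q-3}=\pp^3$, so $L$ is a rational normal cubic; since $B$ is not rational, the degree $3$ covering $\fie^{-1}(L)\to L$ then forces $C$ to map birationally onto $L$, whence $C\cong\pp^1$, against Lemma \ref{norational}.

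Now assume $\Si$ has no pencil of curves of degree $<4$. By Proposition \ref{deg11} we may write $H=2H'+J$ on $\Upsilon$ with $J>0$, $h^0(H')\ge 3$ and $|H'|$ free of fixed components, so $H'$ is nef. From $H^2=11$ we get $2HH'=11-HJ\le 10$, hence $HH'\le 5$; the exclusion of pencils of degree $<4$ prevents $|H'|$ from being composed with a pencil, so $H'^2\ge 1$, and then $2H'^2\le 2H'^2+JH'=HH'\le 5$ gives $H'^2\le 2$. Pulling $|H'|$ back through a common resolution $\tilde S$ of $\fie$ and taking the moving part, one obtains a linear system $|M'|$ on $S$ with $h^0(S,M')\ge h^0(\Upsilon,H')\ge 3$, not composed with a pencil, whose general member $M'$ is irreducible and maps onto a general member of $|H'|$; comparing $M'$ with the pull-back of $H'$ on $\tilde S$ and using the index theorem one gets $M'^2\le 3H'^2\le 6$. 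On the other hand Proposition \ref{generation}(i) yields $p_a(M')\ge 2q-3+3=12$, i.e. $M'^2+K_SM'\ge 22$, so $K_SM'\ge 22-M'^2$; applying the index theorem to the ample class $K_S$ then gives $(22-M'^2)^2\le (K_SM')^2\le K^2_S\,M'^2\le 35\,M'^2$, which forces $M'^2\ge 7$ and contradicts $M'^2\le 6$. This completes the proof.

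I expect the main obstacle to be the bookkeeping in the second case: one must pull the linear system $|H'|$ back through the canonical map, which is only a rational map of degree $3$, and through the desingularization $\eta$, and verify on a common resolution (where $\fie$ becomes a morphism) that the resulting moving system on $S$ is still not composed with a pencil, that its general member is irreducible with the prescribed image, and that its self-intersection stays bounded by $3H'^2$. The first case, by contrast, requires only the elementary fact that a connected curve of small degree spans a small linear space, together with Proposition \ref{generation} and the non-existence of rational curves on $S$.
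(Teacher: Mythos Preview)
Your first case (a pencil of degree $\le 3$ on $\Si$) is handled correctly; the paper dispatches it in one line by citing Proposition \ref{generation}, but your more explicit treatment via the Stein factorisation and the absence of rational curves on $S$ is a valid way to fill in that citation.

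The second case, however, contains a fatal error. With the natural meaning of $M'$ (namely $M'=\pi_*\psi^*H'$ on $S$, where $\pi\colon\tilde S\to S$ is birational and $\psi\colon\tilde S\to\Upsilon$ is the degree-$3$ morphism), the inequality goes the \emph{other} way: one has $(\psi^*H')^2=3H'^2$ and, writing $\psi^*H'=\pi^*M'-F$ with $F$ $\pi$-exceptional, $M'^2=(\pi^*M')^2=(\psi^*H')^2-F^2\ge 3H'^2$ since $F^2\le 0$. The index theorem applied on $\tilde S$ to $\psi^*H'$ and $\pi^*M'$ gives exactly the same conclusion $M'^2\ge 3H'^2$, not $\le$. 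In fact in every numerical case that actually arises one computes $M'^2=D^2\in\{7,8\}$, strictly larger than $3H'^2\le 6$. So your contradiction $M'^2\le 6$ versus $M'^2\ge 7$ never materialises: the lower bound $M'^2\ge 7$ is correct (and is precisely what the paper proves), but your upper bound is simply false.

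The paper salvages the situation by abandoning any direct upper bound on $D^2$ and instead bounding $K_SD$: from $K_S^2\le 35$ and $K_S$ nef one gets $K_SD\le 17$, while the same index-theorem computation you use (combined with $p_a(D)\ge 12$ and $K_S^2\ge 33$) forces $K_SD\ge 16$. This pins down $K_SD\in\{16,17\}$ and $K_S\Gamma\in\{1,2,3\}$. The proof then becomes a short but delicate case analysis of the effective divisor $\Gamma$: ampleness of $K_S$ and the absence of rational curves (your Lemma \ref{norational}) constrain each irreducible component, and the cases $K_S\Gamma=1$ and the non-reduced subcase of $K_S\Gamma=3$ are eliminated by a parity argument on $K_SD=2D^2+D\Gamma$. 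The remaining reduced cases are killed by observing that $D^2$ and $D\Gamma$ are both even when $K_SD=16$, whence $K_S^2\equiv\Gamma^2\pmod 8$, which fails in every surviving configuration. You will need this finer arithmetic; there is no shortcut via a self-intersection bound of the kind you attempted.
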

\begin{proof}
Suppose for  contradiction that $\fie$ is not birational.  Then, by Lemma \ref{lem:3rational}, $\fie$ has degree 3 and  the canonical image $\Sigma\subset \pp^8$ is a rational surface of degree $11$.

By Proposition \ref{generation} $\Sigma$ has no pencil of curves of degree $\le 3$, hence by  Proposition \ref{deg11}  we can write $K_S=2D+\Gamma$ where $\Gamma$ is an effective divisor $\geq 0$  and $|D|$ is a linear system without fixed components such that $h^0(S, D)\geq 3$.    Since,  by Lemmas \ref{lem:K2q6} and \ref{lem:3rational},   $33\leq K_S^2\leq 35$,   $K_S$  is not divisible by 2 in $\Pic(S)$. This implies that   $\Gamma\neq 0$ and $\Gamma$ is also not divisible by 2 in $\Pic(S)$.  

Since $K_S^2\leq 35$ and $K_S$ is nef, $K_SD\leq 17$.  We claim that $K_SD\geq 16$.  For contradiction suppose that $K_SD\leq 15$. By  Proposition \ref{generation},  we have $p_a(D)\geq 12$, hence $D^2\geq 7$ by  the adjunction formula. This gives a contradiction to the index theorem (Corollary \ref{Hodge1}), because $7\cdot 33= 231>(15)^2=225$. So $K_SD\geq 16$.

 Then we have the following possibilities:

\begin{itemize} 
\item $K_SD=16$, $K_S\Gamma=1$ and $K_S^2=33$;
\item $K_SD=16$, $K_S\Gamma=2$ and $K_S^2=34$;
\item $K_SD=16$, $K_S\Gamma=3$ and $K_S^2=35$;
\item $K_SD=17$, $K_S\Gamma=1$ and $K_S^2=35$;
\end{itemize} 

\medskip

We start by noticing that $\Gamma^2\leq-1$. In effect, by the index theorem (Corollary \ref{Hodge1}), $\Gamma^2\leq 0$. Since by the adjunction formula $\Gamma^2\equiv K_S\Gamma  \mod 2$, $\Gamma^2=0$ can only occur if $K_S\Gamma=2$. But this possibility is excluded by \ Corollary \ref{f1},  because $\Gamma$ is 1-connected by Corollary \ref{alwaysc}.
The same reasoning shows that any irreducible component $\theta$ of  $\Gamma$ satisfies also $\theta^2\leq -1$.  
Since, by  Lemma \ref{norational},  $K_S$ is ample, $K_S\theta>0$ for every component $\theta$ of $\Gamma$. 
Furthermore, since again by  Lemma \ref{norational}, there are no rational curves in $S$, any irreducible component  $\theta$ of $\Gamma$  such that $K_S\theta=1$ must satisfy $\theta^2=-1$, whilst an  irreducible component  $\theta$ of $\Gamma$  such that $K_S\theta=2$  must satisfy $\theta^2=-2$. Similarly if $\Gamma $ is irreducible and  $K_S\Gamma=3$ then $\Gamma^2=-3$ or  $\Gamma^2=-1$.
 Note that if $K_S\Gamma=2$ and $\Gamma$ is not irreducible, $\Gamma$ must be the sum of two distinct components because $\Gamma$  is not divisible by 2 in $\Pic(S)$.  
 
 In conclusion:

\begin{enumerate}[\rm (i)] 
\item if  $K_S\Gamma=1$ then $\Gamma$ is irreducible and $\Gamma^2=-1$;
\item if $K_S\Gamma=2$,  $\Gamma$ is reduced. \item if $K_S\Gamma=3$ and $\Gamma$ is  not reduced then $\Gamma=2\theta_1+\theta_2$ where $\theta_1, \theta_2$ are  smooth elliptic curves with self-intersection $-1$.

\end{enumerate}

In case (i)  $\Gamma^2=-1$ and $K_S=2D+\Gamma$ imply that  $\Gamma D=1$. Then   $K_SD=2D^2+\Gamma D=2D^2+1$ is odd and so   $K_SD=17$. This is a contradiction to the adjunction formula because then $D^2=8$ and $K_SD=17$.  So case (i) does not occur. 

 Case (iii) can be excluded in the same way, using the fact that $K_S=2D'+\theta_2$, where $D':=D+\theta_2$, and $K_SD'=17$.

\medskip
So we are left with the cases when  $K_S\Gamma\geq 2$ and $\Gamma$  is reduced.  Then  $K_SD=16$ and so  by the adjunction formula  $D^2$ is even. From   $K_SD=2D^2+D\Gamma$,   we conclude that $D\Gamma$ is  also even.
Then  the equality $K_S^2=4D^2+4D\Gamma+\Gamma^2$  means that  $\Gamma^2\equiv K_S^2 \mod 8$.

 On the other hand, since  every component of $\Gamma$ has geometric genus $>0$ and $\Gamma$ is reduced, also $p_a(\Gamma)>0$.  We have seen above that  $\Gamma^2<0$ and so there are only the   following  possibilities:

\begin{itemize} 
\item $K_S\Gamma=2$ and $\Gamma^2=-2$ ($K_S^2=34$);
\item $K_S\Gamma=3$ , $\Gamma^2=-1$ ($K_S^2=35$);
\item $K_S\Gamma=3$, $\Gamma^2=-3$ ($K_S^2=35$).  
\end{itemize}

This is a contradiction  because in none of these cases  $\Gamma^2\equiv K_S^2\mod 8$. 

So  $\deg\fie=3$  does not occur  and  therefore $\fie$ is birational. 
\end{proof}

The above Lemma concludes the proof of Theorem \ref{thm:degreecan}

\bigskip

\begin{minipage}{13.0cm}
\parbox[t]{6.5cm}{Margarida Mendes Lopes\\
Departamento de  Matem\'atica\\
Instituto Superior T\'ecnico\\
Universidade T{\'e}cnica de Lisboa\\
Av.~Rovisco Pais\\
1049-001 Lisboa, PORTUGAL\\
mmlopes@math.ist.utl.pt
 } \hfill
\parbox[t]{5.5cm}{Rita Pardini\\
Dipartimento di Matematica\\
Universit\`a di Pisa\\
Largo B. Pontecorvo, 5\\
56127 Pisa, Italy\\
pardini@dm.unipi.it}

\vskip1.0truecm

\parbox[t]{5.5cm}{Gian Pietro Pirola\\
Dipartimento di Matematica\\
Universit\`a di Pavia\\
Via Ferrata, 1 \\
 27100 Pavia, Italy\\
\email{gianpietro.pirola@unipv.it}}
\end{minipage}

\end{document}